\numberwithin{equation}{section}
\theoremstyle{plain}
\newtheorem{lemma}{Lemma}[section]
\newtheorem{corollary}[lemma]{Corollary}
\newtheorem{proposition}[lemma]{Proposition}
\newtheorem{theorem}[lemma]{Theorem}
\theoremstyle{definition}
\newtheorem{remark}[lemma]{Remark}
\newtheorem{definition}[lemma]{Definition}
\renewcommand{\phi}{\varphi}
\renewcommand{\leq}{\leqslant}
\renewcommand{\geq}{\geqslant}
\renewcommand{\epsilon}{\varepsilon}
\renewcommand{\kappa}{\varkappa}
\DeclareMathOperator{\Fr}{Fr}
\DeclareMathOperator{\Mtr}{\mathsf{Mtr}}
\DeclareMathOperator{\Mtrc}{\mathcal{M}\mathsf{tr}}
\DeclareMathOperator{\spec}{Spec}
 \DeclareMathOperator{\cyl}{cyl}
 \DeclareMathOperator{\diag}{diag}
\DeclareMathOperator{\sd}{sd}
\DeclareMathOperator{\Aff}{Aff} 
\DeclareMathOperator{\Hom}{Hom} 
\DeclareMathOperator{\corr}{Corr} 
\DeclareMathOperator{\End}{End} \DeclareMathOperator{\id}{id}
 \DeclareMathOperator{\Mor}{Mor}
\DeclareMathOperator{\Alg}{Alg} \DeclareMathOperator{\colim}{colim}
\DeclareMathOperator{\Ho}{Ho} \DeclareMathOperator{\Fun}{\sf Fun}
 \DeclareMathOperator{\kgl}{\mathsf{kgl}}
\DeclareMathOperator{\pt}{pt} 
 \DeclareMathOperator{\im}{Im}
\DeclareMathOperator{\smaff }{AffSm} \DeclareMathOperator{\Ar}{Ar}
 \DeclareMathOperator{\Mod}{Mod}
 \DeclareMathOperator{\Ob}{Ob}
\DeclareMathOperator{\nis}{\mathsf{nis}}
\newcommand{\lra}[1]{\bl{#1}\longrightarrow\relax}
\newcommand{\bl}[1]{\buildrel #1\over}
\newcommand{\cc}{\mathcal}
\newcommand{\bb}{\mathbb}
\newcommand{\op}{{\textrm{\rm op}}}
\newcommand{\aha}{\Alg^u_k}
\newcommand{\ahaw}{{\Alg_{k}}}
\newcommand{\uhom}{\underline{\Hom}}
\newcommand{\shnis}{SH^{\nis}_{S^{1}}}
\begin{document}

\footskip30pt

\title{K-theory and matrix transfers}
\author{Grigory Garkusha}

\address{Department of Mathematics, Swansea University, Fabian Way, Swansea SA1 8EN, UK}
\email{G.Garkusha@swansea.ac.uk}

\keywords{Bivariant algebraic K-theory, matrix transfers, spectral categories, K-motives}

\subjclass[2010]{19D50, 55P42, 14F42, 14C35}

\maketitle

\begin{abstract}
We introduce and study matrix transfers to achieve elementary models for bivariant $K$-theory. They 
share lots of common properties with Voevodsky's framed correspondences and lead to 
symmetric matrix motives of algebraic varieties introduced in this paper. Symmetric matrix motives 
recover $K$-motives and fit in a closed symmetric
monoidal triangulated category of symmetric matrix motives
constructed in this paper by using methods of enriched motivic homotopy theory.
\end{abstract}

\thispagestyle{empty} \pagestyle{plain}

\tableofcontents

\newdir{ >}{{}*!/-6pt/@{>}} 

\section{Introduction}

The role of transfers in Geometry and Topology is multifaceted. If we talk about 
Kasparov $K$-theory or motivic homotopy theory, then an important feature of transfers for $C^*$-algebras or algebraic varieties
is the possibility of computing stable homotopy types associated to various complicated cohomology theories.

We will not dwell on the overview of the achievements of transfers in these areas, but will only 
note the fundamental contribution of Voevodsky who introduced a variety of intricate transfers
for algebraic varieties that led to an abundance of impressive computations in the 
algebro-geometric setting of motivic homotopy theory.

By analogy with Voevodsky's framed transfers of algebraic varieties~\cite{Voe2} we introduce 
matrix transfers for unital (non-commutative) $k$-algebras or (affine) algebraic varieties in this paper. Matrix transfers
share lots of common properties with framed transfers and lead to the concept of
matrix motives, which are constructed in the same fashion with framed motives of algebraic varieties in the sense of~\cite{GP3}. 
Our primary goal is to compute
bivariant $K$-theory by means of matrix motives complementing the existing constructions of bivariant $K$-theories both
for non-commutative algebras and smooth algebraic varieties. 

Stable homotopy theory of spectra is constructed in the same fashion as quasi-coherent sheaves on projective varieties
--- see~\cite{Gark3} for some details. This viewpoint allows to produce spectral categories out of (graded) symmetric 
ring objects using a unified procedure~\cite{GTLMS}. We apply this machinery to our context to introduce a symmetric monoidal 
spectral category $\Mtrc_*(k)$ whose objects are the unital $k$-algebras and symmetric 
spectra of morphisms are defined in terms of matrix transfers. 
Regarding $\Mtrc_*(k)$ as a ``projective variety with several objects", its ``ringoid of global functions" is the category
$\aha$ of unital $k$-algebras and non-unital homomorphisms. The spectral category $\Mtrc_*(k)$ contains much more information than
$\aha$. In particular, its representable modules are stably equivalent to matrix motives after standard algebraic homotopization.

As an application of matrix transfers and matrix motives, 
we introduce and study the closed symmetric triangulated category of symmetric matrix motives $D^\Delta_{\mathrm{mtr}}(k)$ by using enriched 
motivic homotopy theory of spectral categories. This category avoids $\bb A^1$-localization and is realised as the homotopy
category of modules $\Mod\Mtrc_*^\Delta(k)$ over a symmetric monoidal spectral category $\Mtrc_*^\Delta(k)$. The category
$D^\Delta_{\mathrm{mtr}}(k)$ is compactly generated by symmetric matrix motives. Symmetric matrix motives are also shown to
be stably equivalent to $K$-motives in the sense of~\cite{GP,GP1}. In particular, Quillen's $K$-theory of smooth algebraic varieties is 
recovered as the symmetric matrix motive of the point.

We finish the paper by constructing the closed symmetric monoidal triangulated category of big symmetric matrix motives
$D^\Delta_{\mathrm{mtr},\bb G_m}(k)$. We show that $D^\Delta_{\mathrm{mtr}}(k)$ is fully faithfully embedded into
$D^\Delta_{\mathrm{mtr},\bb G_m}(k)$. We also show that $D^\Delta_{\mathrm{mtr},\bb G_m}(k)$ models the category of 
$\kgl$-modules after inverting the exponential characteristic of the base field, where $\kgl$ is a very effective cover of the
algebraic $K$-theory bispectrum $KGL$. We also refer the reader to~\cite{Bac} for another 
model of $\kgl$-modules given in terms of “finite flat correspondences”.

Though major applications of matrix transfers are given for motivic homotopy theory
in this paper, we define and study matrix transfers for all (non-commutative) $k$-algebras
as the author expects similar applications in algebraic Kasparov $K$-theory in the sense of~\cite{Gark,Gark1}.
He does not treat both topics in one go here and primarily concentrates on
applications for motivic homotopy theory.

Throughout the paper $k$ is a fixed commutative ring with unit and
$\ahaw$ is the category of non-unital $k$-algebras
and non-unital $k$-homomorphisms. By $\aha$ we denote the full subcategory in
$\ahaw$ of unital algebras. If there is no likelihood of confusion,
we replace $\otimes_k$ by $\otimes$. If $\cc C$ is a category and
$A,B$ are objects of $\cc C$, we shall often write $\cc C(A,B)$ to
denote the Hom-set $\Hom_{\cc C}(A,B)$. If $\mathcal C$ is a closed symmetric 
monoidal category, we shall sometimes write $[A,C]\in\cc C$
to denote the internal Hom-object $\uhom(A,C)$ associated with 
$A,C\in\mathcal C$.
By geometric realization $|X_{\bullet,\ldots,\bullet}|$ 
of a multi-simplicial set $X_{\bullet,\ldots,\bullet}$ we always mean the diagonal $\diag(X_{\bullet,\ldots,\bullet})$.
We assume 0 to be a natural number.

In general, we shall not be very explicit about set-theoretical
foundations, and we shall tacitly assume we are working in some
fixed universe $\bb U$ of sets. Members of $\bb U$ are then called
{\it small sets\/}, whereas a collection of members of $\bb U$ which
does not itself belong to $\bb U$ will be referred to as a {\it
large set\/} or a {\it proper class}.

\section{The category of matrix transfers $\Mtr_*(\aha)$}\label{sectionmtr}

In this section we introduce the category of matrix transfers on algebras. It is
a kind of a connecting language between bivariant algebraic $K$-theory of algebras and
Voevodsky's theory of framed corrrespondences~\cite{Voe2}. 

\begin{definition}
Given two unital algebras $A,B$ and $n\geq 0$, a {\it matrix transfer of level $n$\/} or a
{\it matrix correspondence of level $n$\/} is a non-unital ring homomorphism
   $$f:A\to M_{2^n}(B),$$
where $M_{2^n}(B)$ is the ring of $(2^n,2^n)$-matrices over the ring $B$.
\end{definition}

We denote by $\Mtr_n(A,B)$ the set of matrix correspondences of 
level $n$ from $A$ to $B$. 
We consider it as a pointed set
pointed at the zero homomorphism $0_n:A\to M_{2^n}(B)$.

As an example, the set $\Mtr_0(A,B)$ is the set of non-unital homomorphisms 
$A\to B$. Note that if $A,B$ are smooth commutative $F$-algebras,
where $F$ is a field, then $\Mtr_0(A,B)$ can be identified with the set of pointed
morphisms $\spec B_+\to\spec A_+$. The latter set coincides with framed correspondences
of level 0 in the sense of Voevodsky~\cite{Voe2}.

Let $f:A\to M_{2^n}(B)$ be an explicit correspondence of level $n$ from $A$
to $B$ and $g:B\to M_{2^m}(C)$ an explicit correspondence of level $m$ from
$B$ to $C$. We define their composition as follows (see~\cite[p.~84]{Gar1} and~\cite[p.~117]{GP1}):
   $$(f:A\to M_{2^n}(B),g:B\to M_{2^m}(C))\longmapsto g\circ f:=M_{2^n}(g)\circ f:A\to M_{2^{n+m}}(C).$$
Here $M_{2^n}(g)$ is the composition of the homomorphism $M_{2^n}(B)\to M_{2^n}M_{2^m}(C)$, induced by $g$ and the natural
isomorphism $M_{2^n}M_{2^m}(C)\cong M_{2^{n+m}}(C)$ obtaned by inserting $(2^m,2^m)$-matrices into entries of a $(2^n,2^n)$-matrix.
The composition of matrix transfers defines associative maps
   \begin{equation*}\label{compos}
    \Mtr_n(A,B)\times \Mtr_m(B,C)\to\Mtr_{n+m}(A,C),\quad n,m\geq 0.
   \end{equation*}

Given a pair of unital algebras $A,B$, denote by $\Mtr_*(A,B)$ the set $\bigsqcup_n\Mtr_n(A,B)$.
Composition of matrix transfers defines a category 
$\Mtr_*(\aha)$. Their objects are those of $\aha$
and morphisms given by $\Mtr_*(A,B)$.
There is an obvious inclusion functor
   $$\aha\to\Mtr_*(\aha)$$
identifying non-unital algebra homomorphisms with matrix correspondences of level 0.

\begin{remark}
The category $\Mtr_*(\aha)$ is reminiscent of the Voevodsky's category of framed correspondences $\Fr_*(Sch/S)$~\cite{Voe2}.
It shares lots of common properties with $\Fr_*(Sch/S)$. In what follows we shall indicate as many common properties as possible.
\end{remark}

Given a unital algebra $A$ and a matrix correspondence
$g:B\to M_{2^{n}}(C)$ of level $n$, we define a matrix correspondence
$\id_A\otimes (g:B\to M_{2^{n}}(C)):A\otimes B\to A\otimes C$ of level $n$ as
   $$A\otimes B\xrightarrow{\id\otimes g}A\otimes M_{2^{n}}(C)\lra\cong M_{2^{n}}(A\otimes C),$$
where the isomorphism is given by the rule $a\otimes(c_{ij})\mapsto(a\otimes c_{ij})$.

Let $f:A_1\to A_2$ be a non-unital homomorphism of of unital algebras. Then the diagram of 
matrix correspondences
   \begin{equation}\label{(6.0.2)}
     \xymatrix{A_1\otimes B\ar[d]_{f\otimes\id_B}\ar[rrr]^{\id_{A_1}\otimes(g:B\to M_{2^{n}}(C))}&&&A_1\otimes C\ar[d]^{f\otimes\id_C}\\
               A_2\otimes B\ar[rrr]_{\id_{A_2}\otimes(g:B\to M_{2^{n}}(C))}&&&A_2\otimes C}
   \end{equation}
commutes. This shows that the category $\Mtr_*(\aha)$ has a ``module" structure over the category
$\aha$ (the latter can also be regarded as $\Mtr_0(\aha)$). 
Given a morphism $f$ in $\aha$ and a matrix correspondence
$g$, one defines $f\otimes g$ to be the matrix correspondence 
given by the diagonal in~\eqref{(6.0.2)}. So we have
an action map
   \begin{equation*}\label{action}
    \aha\boxtimes \Mtr_*(\aha)\to\Mtr_*(\aha).
   \end{equation*}

\begin{remark}
Similarly to the category of framed correspondences $\Fr_*(Sch/S)$ of Voevodsky~\cite{Voe2},
the category $\Mtr_*(\aha)$ has neither an initial nor a final object.
Endomorphisms of the zero algebra in $\Mtr_*(\aha)$ are of the form $\Mtr_*(0,0)=
\{0_0,\ldots,0_n,\ldots\}$. Observe also that the direct product of algebras is not their
product in $\Mtr_*(\aha)$.
\end{remark}

\begin{definition}
A {\it $\Mtr_*$-functor\/} or a {\it matrix functor\/} $\cc F$ on $\aha$ is a covariant
functor from $\Mtr_*(\aha)$ to the
category of sets. We say that $\cc F$ is {\it additive\/} if it lands in the
category of pointed sets such that $\cc F(0)=pt$ and $\cc
F(A\times B)=\cc F(A)\times\cc F(B)$.
\end{definition}

Note that the representable functors on $\Mtr_*(\aha)$ are not additive.
To associate an additive functor to an algebra, one needs the following construction. Denote by
$\sigma_A$ the matrix correspondence of level 1 from $A$ to $A$
given by the non-unital homomorphism $A\to M_2(A)$ sending $A$ to the upper left conner of $M_2(A)$.
For any non-unital homomorphism $f:A\to B$ one has $f\sigma_A=\sigma_B f$.
However, for a general matrix correspondence $f$ one
has $f\sigma_A\ne\sigma_B f$. 
We set
   $$\Mtr(A,-):=\colim(\Mtr_0(A,-)\xrightarrow{\sigma_A}\Mtr_1(A,-)\xrightarrow{\sigma_A}\Mtr_2(A,-)\xrightarrow{\sigma_A}\cdots).$$
Then $\Mtr(A,-)$ is a covariant functor from $\Mtr_*(\aha)$ to sets. 

\begin{lemma}\label{ka}
For any $A\in\aha$ the functor $\Mtr(A,-)$ is additive. Moreover, $\Mtr(A,B)=\Hom_{\ahaw}(A,M_\infty (B))$,
where $B\in\aha$ and $M_\infty(B)=\bigcup_nM_n(B)$ is the union of all finite matrices over $B$.
\end{lemma}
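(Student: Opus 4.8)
The plan is to establish the formula $\Mtr(A,B)=\Hom_{\ahaw}(A,M_\infty(B))$, natural in $B$, and then read off additivity of $\Mtr(A,-)$ from it. I would begin by making the transition maps in the colimit defining $\Mtr(A,-)$ explicit. For a level-$n$ correspondence $g\colon A\to M_{2^n}(B)$ one has $g\circ\sigma_A=M_2(g)\circ\sigma_A\colon A\to M_2(M_{2^n}(B))\cong M_{2^{n+1}}(B)$; since $\sigma_A$ sends $a$ to $\diag(a,0)$, this composite sends $a$ to the $(2^{n+1},2^{n+1})$-matrix with $g(a)$ in the upper-left $2^n\times 2^n$ block and zeros elsewhere. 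Thus, under the tautological identification $\Mtr_n(A,B)=\Hom_{\ahaw}(A,M_{2^n}(B))$, the map $\sigma_A$ is post-composition with the upper-left corner embedding $M_{2^n}(B)\hookrightarrow M_{2^{n+1}}(B)$. These embeddings exhibit $\{M_{2^n}(B)\}_n$ as a cofinal subsystem of $\{M_N(B)\}_N$, so $\colim_n M_{2^n}(B)=\bigcup_N M_N(B)=M_\infty(B)$, and one obtains a canonical comparison map
\[
\Mtr(A,B)=\colim_n\Hom_{\ahaw}(A,M_{2^n}(B))\longrightarrow\Hom_{\ahaw}(A,M_\infty(B)).
\]
Since $B\mapsto M_\infty(B)$ extends to a functor on $\Mtr_*(\aha)$ --- a level-$m$ correspondence $g\colon B\to M_{2^m}(C)$ induces $M_\infty(B)\xrightarrow{M_\infty(g)}M_\infty(M_{2^m}(C))\cong M_\infty(C)$ --- a straightforward check shows this comparison map is natural in $B$ and compatible with base points.

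Next I would show the comparison map is bijective. Injectivity is formal: the colimit is filtered and each corner embedding $M_{2^n}(B)\hookrightarrow M_\infty(B)$ is injective, so two homomorphisms defined at finite stages that agree in $M_\infty(B)$ already agree at a common finite stage. Surjectivity is where \emph{unitality of $A$} enters: given a non-unital homomorphism $\phi\colon A\to M_\infty(B)$, the element $e:=\phi(1_A)$ is an idempotent of $M_\infty(B)$, hence lies in $M_{2^n}(B)$ for some $n$; for every $a\in A$ one has $\phi(a)=\phi(1_A\cdot a\cdot 1_A)=e\,\phi(a)\,e$, and multiplying on both sides by an idempotent whose nonzero entries sit in the first $2^n$ rows and columns forces $e\,\phi(a)\,e\in M_{2^n}(B)$. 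So $\phi$ corestricts to a (non-unital $k$-algebra) homomorphism $A\to M_{2^n}(B)$ and therefore lies in the image. This yields the natural bijection $\Mtr(A,B)\cong\Hom_{\ahaw}(A,M_\infty(B))$.

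Additivity of $\Mtr(A,-)$ then follows quickly. Via the bijection, $\Mtr(A,-)$ takes values in pointed sets, pointed at the class of the zero homomorphism, which is preserved by every matrix correspondence because composing with $0$ gives $0$. Since $M_\infty(0)=0$, we get $\Mtr(A,0)=\Hom_{\ahaw}(A,0)=\pt$. Finally $M_n(B\times B')=M_n(B)\times M_n(B')$ and filtered colimits commute with finite products, so $M_\infty(B\times B')\cong M_\infty(B)\times M_\infty(B')$; as $\Hom_{\ahaw}(A,-)$ carries products to products, $\Mtr(A,B\times B')\cong\Mtr(A,B)\times\Mtr(A,B')$, and unwinding the two projections $B\times B'\to B$ and $B\times B'\to B'$ identifies this with the canonical comparison map. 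Hence $\Mtr(A,-)$ is additive.

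The main obstacle is the surjectivity in the middle step: one must show that an \emph{arbitrary} non-unital homomorphism into the infinite matrix ring $M_\infty(B)$ already factors through a finite matrix ring $M_{2^n}(B)$. The essential input is that the image of the unit $\phi(1_A)$ is an idempotent sitting at a finite level $2^n$, which confines the whole image to the corner $\phi(1_A)M_\infty(B)\phi(1_A)\subseteq M_{2^n}(B)$; unitality of $A$ cannot be dropped --- for non-unital $A$ the identity map of $M_\infty(k)$ already fails to factor through any $M_{2^n}(k)$. Everything else --- the explicit form of $\sigma_A$, naturality, and preservation of products and base points --- is routine bookkeeping.
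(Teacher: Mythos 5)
Your proof is correct, and it organizes the argument differently from the paper. The paper verifies additivity directly on the colimit: it notes that $\sigma_A$ glues the basepoints $0_n$ so that $\Mtr(A,0)=pt$, and for products it represents two classes $f\in\Mtr(A,B)$, $g\in\Mtr(A,B')$ at a common level $n$ and uses the identification $M_{2^n}(B\times B')=M_{2^n}(B)\times M_{2^n}(B')$ to get the bijection $(f,g)\mapsto f\times g$; the formula $\Mtr(A,B)=\Hom_{\ahaw}(A,M_\infty(B))$ is then simply declared obvious. You instead prove the $M_\infty$-formula first --- identifying the transition map $f\mapsto f\circ\sigma_A$ with the upper-left corner embedding, invoking cofinality of $\{M_{2^n}(B)\}$ in $\{M_N(B)\}$, and proving surjectivity of the comparison map via the idempotent $e=\phi(1_A)$, which confines the image to a corner $eM_\infty(B)e\subseteq M_{2^n}(B)$ --- and then deduce additivity formally from the facts that $M_\infty(-)$ and $\Hom_{\ahaw}(A,-)$ preserve finite products and the zero algebra. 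Both routes rest on the same elementary matrix facts, but yours supplies the one genuinely non-formal point the paper leaves unproved (that an arbitrary non-unital homomorphism $A\to M_\infty(B)$ factors through a finite stage, where unitality of $A$ is essential), at the cost of having to check naturality of the comparison map and that the abstract product bijection agrees with the canonical one induced by the projections, which you do address. The paper's direct argument avoids these bookkeeping checks but gives less insight into why the $M_\infty$ description holds.
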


\begin{proof}
First observe that $\Mtr_*(A,0)=\{0_0,\ldots,0_n,\ldots\}$ and that composition with $\sigma_A$
takes $0_n$ to $0_{n+1}$. As a result, $\sigma_A$ glues the distinguished morphisms $0_n$-s,
and hence the functor $\Mtr(A,-)$ lands in pointed sets with $\Mtr(A,0)=pt$. Suppose $f\in\Mtr(A,B)$
and $g\in\Mtr(A,B')$. Applying $\sigma_A$ if necessary, we may assume that $f,g$ are represented by two matrix correspondences
of the same level $n$, say. They give a matrix correspondence $f\times g:A\to M_{2^n}(B\times B')$ and
the rule $(f,g)\mapsto f\times g$ plainly yields a bijection $\Mtr(A,B)\times\Mtr(A,B')\cong\Mtr(A,B\times B')$.
The formula $\Mtr(A,B)=\Hom_{\aha}(A,M_\infty (B))$ is obvious.
\end{proof}

\begin{definition}
A matrix functor $\cc F$ is said to be {\it $\sigma$-invariant\/}
(respectively {\it stable}) if for any $A\in\aha$ one has $\cc
F(\sigma_A)$ is an isomorphism (respectively $\cc
F(\sigma_A)=\id_{\cc F(A)}$). We say that $\cc F$ is {\it homotopy invariant\/}
if takes $A\to A[t]$, $A\in\aha$, to an isomorphism.
\end{definition}

The notion of a $\sigma$-invariant/stable functor is similar to that
of a (quasi-)stable framed presheaf (see~\cite{Voe2,GP3}). Note that
the matrix functor $\Mtr(A,-)$, $A\in\aha$, is not
$\sigma$-invariant.

Given $A\in\aha$ let $\sigma'_A\in\Mtr_1(A,A)$ be the
correspondence $a\mapsto\left(\begin{array}{cc} 0 & 0\\ 0 & a \\
\end{array}\right)$. It is useful to have the following result.

\begin{lemma}\label{m2stableshtrih}
$\cc F(\sigma_A)=\cc F(\sigma_A')$ for every $\sigma$-invariant
matrix functor $\cc F$ and algebra $A\in\aha$.
\end{lemma}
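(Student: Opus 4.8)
The plan is to exhibit an explicit element $\tau_A\in\Mtr_2(A,A)$ realising the standard ``rotation/conjugation'' trick for $2\times 2$ matrices, and then to chase it through the functor $\cc F$. Concretely, consider the correspondence $\sigma_A\oplus\sigma_A':A\to M_4(A)$ of level $2$, $a\mapsto\diag\bigl(\left(\begin{smallmatrix}a&0\\0&0\end{smallmatrix}\right),\left(\begin{smallmatrix}0&0\\0&a\end{smallmatrix}\right)\bigr)$; via the identification $M_2M_2(A)\cong M_4(A)$ this is the correspondence $a\mapsto\left(\begin{smallmatrix}\sigma_A(a)&0\\0&\sigma_A'(a)\end{smallmatrix}\right)$ sitting inside $M_2(M_2(A))$. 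The first observation is that composing $\sigma_A$ with the block embedding of $M_2(A)$ into $M_4(A)$ along the upper-left $2\times2$ block gives a correspondence of level $2$ which, up to $\sigma$-stabilisation, agrees with $\sigma_A\circ\sigma_A$; and likewise for $\sigma_A'$. So after applying the $\sigma$-invariant functor $\cc F$, both $\cc F(\sigma_A)$ and $\cc F(\sigma_A')$ become comparable to $\cc F$ applied to a common ``level $2$ block'' correspondence.

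The key step is a permutation argument: there is a permutation matrix $w\in M_{2^n}(k)$ (for a suitable $n$) such that conjugation by $w$ carries the block-diagonal correspondence built from $\sigma_A$ into the one built from $\sigma_A'$, and conjugation by a permutation matrix is an inner automorphism of $M_{2^n}(A)$, hence — being $M_{2^n}$ of a non-unital homomorphism of the form $\id$ up to the permutation — it induces the identity after one further $\sigma$-stabilisation. More precisely I would show: for any $g:A\to M_{2^n}(B)$ and any permutation $w$ of $\{1,\dots,2^n\}$, the correspondences $g$ and $w g w^{-1}$ become equal in $\Mtr(A,B)$, because inserting them into a larger matrix makes them conjugate by a permutation that is itself ``levelwise'' trivial; combined with $\sigma$-invariance of $\cc F$ this forces $\cc F(g)=\cc F(wgw^{-1})$. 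Applying this with $g$ the level-$2$ block form of $\sigma_A$ and an appropriate transposition $w$ swapping the two blocks yields $\cc F(\sigma_A)=\cc F(\sigma_A')$.

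The step I expect to be the main obstacle is the precise bookkeeping of levels and of the isomorphisms $M_{2^n}M_{2^m}\cong M_{2^{n+m}}$: one must check that the permutation used really is of the form ``$M_{2^m}$ of something trivial composed with a block reshuffle'' so that its effect on $\cc F$ is controlled purely by $\sigma$-invariance, rather than requiring stability. I would handle this by first proving the clean lemma that $\cc F(g)$ depends only on the class of $g$ in $\Mtr(A,B)=\Hom_{\ahaw}(A,M_\infty(B))$ (using that $\sigma$-invariance forces $\cc F$ to factor through the colimit defining $\Mtr(A,-)$), and then observing that $\sigma_A$ and $\sigma_A'$ have the same image in $\Hom_{\ahaw}(A,M_\infty(A))$, namely the inclusion $a\mapsto$ ``$a$ in one diagonal slot'', since $M_\infty(A)$ absorbs the conjugating permutation. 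That reduces the whole statement to the bijection of Lemma~\ref{ka} together with the elementary fact that all rank-one diagonal idempotent embeddings $A\to M_\infty(A)$ agree after composing with a single $\sigma_A$.
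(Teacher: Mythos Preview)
Your proposal has a genuine gap, and it occurs precisely at the ``clean lemma'' you rely on. You assert that $\sigma$-invariance forces $\cc F(g)$ to depend only on the class of $g$ in the colimit $\Mtr(A,B)$. But the transition maps of that colimit send $g$ to $g\circ\sigma_A$, so factoring through it would require $\cc F(g)=\cc F(g\circ\sigma_A)=\cc F(g)\circ\cc F(\sigma_A)$ for every $g$, i.e.\ $\cc F(\sigma_A)=\id$. That is \emph{stability}, strictly stronger than the $\sigma$-invariance you are given; the lemma you are trying to prove is exactly a step towards closing this gap, so you cannot assume it.

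Even granting the clean lemma, your endgame fails: $\sigma_A$ and $\sigma_A'$ are \emph{distinct} elements of $\Hom_{\ahaw}(A,M_\infty(A))$, namely $a\mapsto a\,e_{11}$ and $a\mapsto a\,e_{22}$, and further transition maps of the colimit (which are the identity on the colimit) cannot merge them. The same objection kills the earlier claim that ``$g$ and $wgw^{-1}$ become equal in $\Mtr(A,B)$'': conjugation by a permutation of $\{1,\ldots,2^n\}$ genuinely changes the element of $\Hom_{\ahaw}(A,M_\infty(B))$, and your assertion that it ``induces the identity after one further $\sigma$-stabilisation'' is not substantiated --- after stabilising, the conjugating permutation is still a nontrivial automorphism of the larger matrix algebra.

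The paper's own proof is simply a citation of~\cite[Lemma~2.2.4]{Cor}. The permutation intuition you start with is indeed the right one: one passes through the object $M_2(A)$, writing $\sigma_A=\rho\circ\iota$ and $\sigma_A'=\rho\circ\psi\circ\iota$ with $\iota\in\Mtr_0(A,M_2(A))$ the corner embedding, $\rho=\id_{M_2(A)}\in\Mtr_1(M_2(A),A)$, and $\psi$ the swap automorphism of $M_2(A)$; the point is then to show $\cc F(\psi)=\id$ using $\sigma$-invariance for $M_2(A)$ and a careful comparison of block embeddings, not a passage to $M_\infty$.
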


\begin{proof}
This follows from~\cite[Lemma~2.2.4]{Cor}.
\end{proof}

There is a distinguished level one matrix correspondence for every
$A\in\aha$
   $$\diag:A\times A\to A,\quad (a_1,a_2)\mapsto\left(\begin{array}{cc} a_1 & 0
   \\ 0 & a_2 \\ \end{array}\right).$$
It is similar to the level one framed correspondence $\delta\in
Fr_1(X,X\sqcup X)$, $X\in Sch/S$, defined by the triple $(U=(\bb
A^1-\{0\}\sqcup\bb A^1-\{1\})_X,\phi=(t-1)\sqcup t,g:(\bb
A^1-\{0\}\sqcup \bb A^1-\{1\})_X\to X\sqcup X),$  where $t:\bb
A^1_X\to X$ is the projection (see~\cite{Voe2}).

Any additive matrix functor $\cc F$ defines maps
$$\cc F(A)\times\cc F(A)= \cc F(A\times A)\xrightarrow{\diag_*} \cc F(A).$$

The following statement is reminiscent of Voevodsky's
theorem~\cite[3.6]{Voe2} for additive stable homotopy invariant
framed presheaves.

\begin{theorem}\label{salavat}
Let $\cc F$ be an additive stable homotopy invariant matrix 
functor. Then the maps $\cc F(A)\times\cc F(A)\to\cc F(A)$ make
$\cc F$ into a matrix functor of abelian monoids. The neutral
element of the monoid is determined by the morphism $pt=\cc
F(0)\to\cc F(A)$ induced by a unique morphism $0\to A$ of level
zero.
\end{theorem}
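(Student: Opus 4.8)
The plan is to mimic the classical argument for framed presheaves (Voevodsky's \cite[3.6]{Voe2}, as developed in \cite{GP3}), transported through the $\sigma$-invariance machinery provided by Lemmas~\ref{ka}--\ref{m2stableshtrih}. First I would verify \emph{commutativity}: the swap isomorphism $\tau\colon A\times A\to A\times A$ induces $\cc F(\tau)$ on $\cc F(A)\times\cc F(A)$ which interchanges the two factors, and one checks $\diag\circ\tau$ and $\diag$ agree up to conjugation by the permutation matrix $\left(\begin{smallmatrix}0&1\\1&0\end{smallmatrix}\right)$ in $M_2(A)$; since conjugation by an elementary permutation matrix is an inner automorphism, it becomes trivial after one application of $\sigma_A$ (this is exactly the content one extracts from Lemma~\ref{m2stableshtrih} and its source \cite[Lemma~2.2.4]{Cor}), so $\diag_*\circ\cc F(\tau)=\diag_*$ and the operation is commutative. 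Next I would verify that the morphism $pt=\cc F(0)\to\cc F(A)$ is a two-sided unit: the composite $A\cong A\times 0\hookrightarrow A\times A\xrightarrow{\diag}A$ is the matrix correspondence $a\mapsto\left(\begin{smallmatrix}a&0\\0&0\end{smallmatrix}\right)$, which is precisely $\sigma_A$; since $\cc F$ is $\sigma$-invariant (in fact stable), $\cc F(\sigma_A)=\id$, so adding the neutral element does nothing.

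The heart of the matter is \emph{associativity}. Here I would compare the two level-two matrix correspondences $A\times A\times A\to A$ obtained by bracketing $\diag$ on the left versus the right; they are given by $(a_1,a_2,a_3)\mapsto\mathrm{diag}(a_1,a_2,a_3)$ sitting inside $M_4(A)$ via two different block embeddings of $M_2M_2(A)\cong M_4(A)$, i.e.\ they differ by conjugation by a permutation matrix in $M_4(A)$ (the one reshuffling $\{1,2,3\}$ inside a $4$-frame). As above, conjugation by a permutation matrix is inner, hence killed after applying $\sigma_A$ sufficiently many times; combined with $\sigma$-invariance this gives $(\diag_*)\circ(\diag_*\times\id)=(\diag_*)\circ(\id\times\diag_*)$. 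One also needs the elementary compatibility that $\cc F$ applied to the structural isomorphisms $A\times A\times A\cong (A\times A)\times A\cong A\times(A\times A)$ matches the additivity isomorphisms $\cc F(A\times A\times A)\cong\cc F(A)^{\times 3}$—this is just functoriality of the additive structure and requires no homotopy input.

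The step I expect to be the genuine obstacle is making the phrase ``conjugation by a permutation matrix becomes trivial after applying $\sigma_A$'' fully precise in the \emph{matrix-transfer} setting rather than quoting it from the framed or $KK$-theoretic literature. Concretely: if $u\in M_{2^n}(A)$ is a permutation matrix and $c_u\colon M_{2^n}(A)\to M_{2^n}(A)$ the associated inner automorphism, I must show that for the matrix correspondence $f\colon A\to M_{2^n}(A)$ the two correspondences $f$ and $c_u\circ f$ satisfy $\cc F(f)=\cc F(c_u\circ f)$ for every $\sigma$-invariant $\cc F$. This is where \cite[Lemma~2.2.4]{Cor} (already invoked in Lemma~\ref{m2stableshtrih}) does the work: after stabilising, an inner automorphism by an invertible matrix is connected to the identity by an elementary path, and $\sigma$-invariance—equivalently, by the homotopy hypothesis, $\bb A^1$-invariance via $A\to A[t]$—collapses it. Once this ``inner automorphisms act trivially after stabilisation'' lemma is stated cleanly for matrix transfers, commutativity, unitality and associativity all follow by the bookkeeping of permutation matrices sketched above, and homotopy invariance enters \emph{only} through that single lemma (so the bracket-comparison arguments themselves are purely combinatorial). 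Finally, functoriality of the monoid structure in $\cc F$—i.e.\ that a morphism of additive stable homotopy invariant matrix functors respects the addition—is immediate from naturality of $\diag_*$, and I would dispatch it in one line.
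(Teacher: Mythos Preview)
Your proposal is correct and follows essentially the same route as the paper. The paper's proof also reduces associativity and commutativity to the observation that the competing level-two correspondences $A\times A\times A\to A$ (resp.\ $A\times A\to A$) differ by conjugation by a permutation in $M_4(A)$, and then kills that conjugation via an explicit polynomial homotopy---the ``rotational'' matrix $T(x)\in SL_2(k[x])$ with $T(0)=I$, $T(1)=\left(\begin{smallmatrix}0&-1\\1&0\end{smallmatrix}\right)$---together with homotopy invariance; the unit argument via $\sigma_A$, $\sigma_A'$ and Lemma~\ref{m2stableshtrih} is identical to yours. The only differences are cosmetic: the paper writes down the two $4\times4$ diagonals explicitly (they are $\diag(a_1,a_2,0,a_3)$ and $\diag(a_1,a_3,a_2,0)$, not both $\diag(a_1,a_2,a_3)$ as you wrote), homotopes each to a common third matrix rather than directly to each other, and builds the polynomial path by hand rather than quoting the Corti\~nas lemma; and one small caution---your phrase ``$\sigma$-invariance, equivalently $\bb A^1$-invariance'' is not right, since these are independent hypotheses and you genuinely need both (stability to absorb the level shift, homotopy invariance to collapse the path).
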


\begin{proof}
For an additive matrix functor $\cc F$ and a pair of correspondences
$g_1,g_2$ of the same level we have $\cc F(g_1\times g_2)=\cc
F(g_1)\times\cc F(g_2)$, where $g_1\times g_2$ is defined in the
obvious manner. Therefore, to check that our operation is
associative, we need to check that the following diagram in 
$\Mtr_*(\aha)$ commutes up to polynomial homotopy:
   \begin{equation}\label{kzvezda}
     \xymatrix{A\times A\times A\ar[rr]^{\sigma_A\times\diag}\ar[d]_{\diag\times\sigma_A}&&A\times A\ar[d]^{\diag}\\
               A\times A\ar[rr]_{\diag}&&A.}
   \end{equation}
One has,
   $$\diag(\sigma_A\times\diag)(a_1,a_2,a_3)=\left(\begin{array}{cccc}
                                          a_1 & 0 & 0 & 0 \\
                                          0 & a_2 & 0 & 0 \\
                                          0 & 0 & 0 & 0 \\
                                          0 & 0 & 0 & a_3 \\
                                        \end{array}\right)$$
and
   $$\diag(\diag\times\sigma_A)(a_1,a_2,a_3)=\left(\begin{array}{cccc}
                                          a_1 & 0 & 0 & 0 \\
                                          0 & a_3 & 0 & 0 \\
                                          0 & 0 & a_2 & 0 \\
                                          0 & 0 & 0 & 0 \\
                                        \end{array}\right)$$
Let $f\in\Mtr_2(A\times A\times A,A)$ be defined as
   $$f(a_1,a_2,a_3)=\left(\begin{array}{cccc}
                                          a_1 & 0 & 0 & 0 \\
                                          0 & 0 & 0 & 0 \\
                                          0 & 0 & a_2 & 0 \\
                                          0 & 0 & 0 & a_3 \\
                                        \end{array}\right)$$
We claim that $\diag(\sigma_A\times\diag)$ and
$\diag(\diag\times\sigma_A)$ are polynomially homotopic to $f$. We
use rotational homotopy. Namely, consider any matrix $T=\left(
\begin{array}{cc}
t_{11}(x)&t_{12}(x)\\t_{21}(x)&t_{22}(x)\end{array} \right)\in
GL_2(k[x])$ with
   $$\partial^0_x(T)=\left( \begin{array}{cc} 1&0 \\0&1 \end{array} \right) \quad \mbox{and} \quad
     \partial^1_x(T) = \left( \begin{array}{cc} 0&-1 \\1&0 \end{array}\right).$$
The matrix
   \begin{equation}\label{rot}
    T=\left(\begin{array}{cc}1-x^2
    &x^3-2x\\x&1-x^2\end{array}\right)\in SL_{2}(k[x])
   \end{equation}
is a concrete example over any with
   $$T^{-1}=\left(\begin{array}{cc}1-x^2 &2x-x^3\\-x&1-x^2\end{array}\right).$$
Set,
   $$C=\left(\begin{array}{cccc}          1 & 0 & 0 & 0 \\
                                          0 & t_{11}(x) & t_{12}(x) & 0 \\
                                          0 & t_{21}(x) & t_{22}(x) & 0 \\
                                          0 & 0 & 0 & 1 \\
                                        \end{array}\right),\quad
     D=\left(\begin{array}{cccc}
                                          1 & 0 & 0 & 0 \\
                                          0 & t_{11}(x) & 0 & t_{12}(x) \\
                                          0 & 0 & 1 & 0 \\
                                          0 & t_{21}(x) & 0 & t_{22}(x) \\
                                        \end{array}\right)$$
Then $C,D\in GL_2(k[x])$ and $C^{-1}(\diag(\sigma_A\times\diag))
C,D^{-1}(\diag(\diag\times\sigma_A))D\in\Mtr_2(A\times A\times
A,A[x])$ give the desired polynomial homotopies
$\diag(\diag\times\sigma_A)\sim f$ and
$\diag(\diag\times)\sigma_A\sim f$. We see that~\eqref{kzvezda} is a
commutative square up to polynomial homotopy.

Likewise, by using rotational homotopies, one can easily show that
the diagram
   $$\xymatrix{A\times A\ar[r]^(.45){swap}\ar[d]_{\diag}&A\times A\ar[r]^(.55){\diag}&A\ar[d]^{\sigma_A}\\
               A\ar[rr]_{\sigma_A}&&A}$$
is commutative up to polynomial homotopy. Thus $\cc F(A)$ is an
Abelian semigroup. Since $(A\cong A\times 0\xrightarrow{\diag}
A)\in\Mtr_1(A,A)$ equals $\sigma_A$ and $(A\cong 0\times
A\xrightarrow{\diag} A)\in\Mtr_1(A,A)$ equals $\sigma_A'$,
Lemma~\ref{m2stableshtrih} implies $\cc F(A)$ is an Abelian monoid
with the neutral element being determined by the morphism $pt=\cc
F(0)\to\cc F(A)$ induced by a unique morphism $0\to A$ of level zero.
\end{proof}

The rotational matrix $T$ above~\eqref{rot} plays an important role in our analysis.
It is a product of elementary matrices from $SL_2(k[x])$:
   $$T=\left(\begin{array}{cc}1&-x\\0&1\end{array}\right)
       \left(\begin{array}{cc}1&0\\x&1\end{array}\right)
       \left(\begin{array}{cc}1&-x\\0&1\end{array}\right)$$
We will also need the following lemma.

\begin{lemma}\label{barbashev}
Let $\Sigma_n\to GL_n(k)$ be the standard inclusion and let $\tau$ be an even
permutation. Then there is a matrix $L(x)\in SL_n(k[x])$
such that $L(0)=I$ is the identity matrix and $L(1)=\tau.$
\end{lemma}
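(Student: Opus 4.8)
The plan is to realize the even permutation $\tau\in\Sigma_n$ as a word in $3$-cycles and then to connect each $3$-cycle to the identity by an explicit path in $SL_3(k[x])\subset SL_n(k[x])$; multiplying these paths gives the desired $L(x)$.

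First I would recall the standard fact that the alternating group $A_n$ is generated by $3$-cycles, so write $\tau=\gamma_1\gamma_2\cdots\gamma_r$ with each $\gamma_j$ a $3$-cycle. Since the $3$-cycles we get act on various triples of indices, it is enough to handle a single $3$-cycle $\gamma$ supported on three coordinates $\{i,j,\ell\}$; after conjugating by a fixed permutation matrix (which does not affect whether something lies in $SL_n(k[x])$) we may as well take $\gamma=(1\,2\,3)$ acting on the first three coordinates, embedded in $GL_n(k)$ by the identity on the remaining coordinates. So the heart of the matter is: construct $P(x)\in SL_3(k[x])$ with $P(0)=I$ and $P(1)$ equal to the permutation matrix of the $3$-cycle $(1\,2\,3)$.

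For this single $3$-cycle I would give an explicit formula. The permutation matrix of $(1\,2\,3)$ is a rotation of order $3$, and the point is that such a rotation matrix is connected to the identity inside $SL_3$ over a field (unlike a transposition, whose matrix has determinant $-1$ and lies in a different component even of $GL_3$; this is exactly why the hypothesis ``$\tau$ even'' is needed). One clean way: write the $3$-cycle matrix as a product of elementary matrices in $SL_3(k)$ — using that a permutation matrix of an even permutation is always a product of elementary matrices — say $E_{a_1}(c_1)\cdots E_{a_s}(c_s)$, and then replace each constant elementary matrix $E_{a}(c)$ by the polynomial path $E_{a}(cx)$, which equals $I$ at $x=0$ and $E_a(c)$ at $x=1$ and always has determinant $1$. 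Taking $P(x):=E_{a_1}(c_1 x)\cdots E_{a_s}(c_s x)$ does the job; alternatively one can use rotational matrices of the type~\eqref{rot} already appearing in the paper, which are manifestly in $SL_2(k[x])$, to build a $3\times3$ block analogue. Then set $L(x):=L_1(x)L_2(x)\cdots L_r(x)$, where $L_j(x)\in SL_n(k[x])$ is the path (in the appropriate $SL_3$-block, conjugated into place by the fixed permutation matrix) connecting $I$ to the permutation matrix of $\gamma_j$; one checks $L(0)=I$ and $L(1)=\tau$ by evaluating, and $\det L(x)=1$ since each factor has determinant $1$.

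The main obstacle is not any single computation but rather making the reduction to $3$-cycles genuinely clean: one must be careful that the conjugating permutation matrices used to move a $3$-cycle onto the first three coordinates are themselves allowed (they are, since conjugation by a constant invertible matrix preserves membership in $SL_n(k[x])$ and preserves the endpoint conditions — at $x=1$ one gets the conjugate of the $3$-cycle matrix, which is the matrix of the conjugated $3$-cycle, exactly what is needed), and that the product of the $L_j$ telescopes correctly to $\tau$ at $x=1$ in the right order. Everything else — that $A_n$ is generated by $3$-cycles, that a $3$-cycle matrix is a product of elementary matrices in $SL_3(k)$, and that $E_a(cx)$ is the obvious polynomial path — is standard, so the proof is really a matter of assembling these pieces and recording the explicit $SL_3(k[x])$ path for a single $3$-cycle.
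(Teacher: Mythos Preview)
Your proposal is correct and ultimately rests on the same mechanism as the paper's proof: write the permutation matrix as a product of elementary matrices $e_{ij}(c)$ and then replace each factor by the polynomial path $e_{ij}(cx)$. The difference is only in how you reach the elementary-matrix factorisation. The paper simply observes that an even permutation lies in $SL_n(\mathbb{Z})$ and invokes the standard fact that $SL_n(\mathbb{Z})$ is generated by elementary matrices (Weibel, Exercise~III.1.5), then sets $L(x)=\prod_l e_{i_l,j_l}(\lambda_l x)$ directly. You instead pass through the decomposition of $\tau$ into $3$-cycles and factor each $3$-cycle separately; this avoids citing the result on generators of $SL_n(\mathbb{Z})$ at the cost of extra bookkeeping with conjugations and block embeddings. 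Both routes land on the same one-line endgame, so your detour through $3$-cycles is sound but unnecessary---once you know any even permutation matrix is a product of elementary matrices, the decomposition into $3$-cycles adds nothing.
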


\begin{proof}
It is enough to show the statement for $k=\bb Z$. The proof is like that of~\cite[Lemma~2.10]{GN}.
Since $\tau$ is even, its image belongs to $SL_n(\bb Z)$. Since
$SL_n(\bb Z)$ is generated by elementary matrices of the form $e_{i,j}(\lambda)$
by~\cite[Exercise~III.1.5]{Wei}
(all its diagonal elements equal to 1, 
$\lambda$ is placed in the $(i,j)$-th entry and zero elsewhere), $\tau$
can be written as a product of elementary matrices:
\[\tau = \prod_{l=1}^m e_{i_l, j_l}(\lambda_l),
\quad\text{ where } 1\leqslant i_l,j_l\leqslant n,\, \lambda_l\in k,\, i_l\neq j_l.\] 
Then $L(x)=\prod_{l=1}^m e_{i_l, j_l}(\lambda_l x)$ defines a matrix
in $SL_n(k[x])$ with $L(0)=I$ and $L(1)=\tau.$
\end{proof}

For a matrix functor $\cc F$ denote by $C_*\cc F$
the simplicial matrix functor of the form $B\mapsto\cc
F(B^{\Delta})$, where $B^{\Delta}$ is the standard simplicial object
in $\aha$
   $$n\longmapsto B^{\Delta^n}=B[x_0,\ldots,x_n]/(x_0+\cdots+x_n-1).$$
For any $k$-algebra $A\in\aha$ consider the simplicial functor taking
$B\in\aha$ to the simplicial set $C_*(\Mtr(A,-))(B)=\Mtr(A,B^{\Delta})$.
As $\sigma_B$ commutes with the
morphisms of level zero, the matrix transfers $\sigma:B^{\Delta^n}\to
B^{\Delta^n}$ of level one define an endomorphism of $C_*(\Mtr(A,-))(B)$ which
we denote $\sigma$. An endomorphism $\sigma'$ of 
$C_*\Mtr(A,-)(B)$ is defined in a similar way by the morphisms
$\sigma_B'$.

In what follows we shall write $\pi_\ell(C_*\Mtr(A,-))$ to denote the matrix functor 
   $$B\in\aha\mapsto\pi_\ell(C_*\Mtr(A,B),0),$$
where the homotopy groups are taken for the simplicial sets $C_*\Mtr(A,B)$ pointed at the zeroth homomorphism.
The main example of a stable homotopy invariant matrix functor is
given by the $\ell$th homotopy group $\pi_\ell(C_*\Mtr(A,-))$. To
see this, let us prove the following statement
(cf.~\cite[Lemma~6.1]{Voe2}):

\begin{lemma}\label{pin}
For any matrix correspondence $f\in\Mtr_n(A,B)$ the matrix 
cor\-re\-spondences $f\circ\sigma_A$ and $\sigma_B\circ f$ are
polynomially homotopic. Likewise, the matrix 
cor\-re\-spondences $f\circ\sigma_A'$ and $\sigma_B'\circ f$ are
canonically polynomially homotopic.
\end{lemma}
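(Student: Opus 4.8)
The plan is to realise each of the two homotopies as a single conjugation homotopy by a path of permutation matrices connecting a permutation matrix to the identity inside $SL(k[x])$, the path being supplied by Lemma~\ref{barbashev}; this is the matrix-transfer counterpart of \cite[Lemma~6.1]{Voe2}, with elementary matrices playing the role of the geometric homotopies there. First I would write out both composites explicitly. Fix $f\in\Mtr_n(A,B)$; for $n=0$ the composition law gives $f\circ\sigma_A=\sigma_B\circ f$ on the nose, so assume $n\ge 1$. Unwinding the definitions together with the block identifications $M_2M_{2^n}(B)\cong M_{2^{n+1}}(B)\cong M_{2^n}M_2(B)$, the correspondence $f\circ\sigma_A=M_2(f)\circ\sigma_A$ sends $a$ to the matrix with top-left $2^n\times 2^n$ block $f(a)$ and all other entries zero, while $\sigma_B\circ f=M_{2^n}(\sigma_B)\circ f$ sends $a$ to the matrix with $f(a)_{ij}$ in position $(2i-1,2j-1)$ and zeros elsewhere. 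Consequently
$$\sigma_B\circ f=P_\tau\,(f\circ\sigma_A)\,P_\tau^{-1}$$
for \emph{any} permutation $\tau\in\Sigma_{2^{n+1}}$ with $\tau(i)=2i-1$ for $1\le i\le 2^n$, because such a $P_\tau$ carries the $(i,j)$-entry to the $(2i-1,2j-1)$-entry and both sides vanish outside those positions. The same computation with $\sigma_A'(a)=\mathrm{diag}(0,a)$ and $\sigma_B'$ shows that $f\circ\sigma_A'$ places $f(a)$ in the bottom-right $2^n\times 2^n$ block, $\sigma_B'\circ f$ places $f(a)_{ij}$ in position $(2i,2j)$, and $\sigma_B'\circ f=P_{\tau'}(f\circ\sigma_A')P_{\tau'}^{-1}$ for any $\tau'$ with $\tau'(2^n+i)=2i$ for $1\le i\le 2^n$.

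The one point that requires care — and the step I expect to be the real obstacle — is that $\tau$ and $\tau'$ can be chosen \emph{even}, which is what Lemma~\ref{barbashev} needs. Indeed $\tau$ is constrained only on $\{1,\dots,2^n\}$, where it is a fixed bijection onto the odd indices of $\{1,\dots,2^{n+1}\}$; on the complementary set of $2^n\ge 2$ elements it may be any bijection onto the even indices, and since the symmetric group on $2^n\ge 2$ letters contains odd permutations, post-composing with a transposition of two even indices if necessary makes $\tau$ even without disturbing the identity above; likewise for $\tau'$. (The naive interleaving permutation is already even for $n\ge 2$ and fails to be so only when $n=1$, which is precisely why one must exploit the zero rows and columns of $f\circ\sigma_A$.) With $\tau$ even we have $P_\tau\in SL_{2^{n+1}}(k)$, so Lemma~\ref{barbashev} yields $L(x)\in SL_{2^{n+1}}(k[x])$ with $L(0)=I$ and $L(1)=P_\tau$.

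Finally I would set $H(a):=L(x)\cdot\iota\big((f\circ\sigma_A)(a)\big)\cdot L(x)^{-1}\in M_{2^{n+1}}(B[x])$, where $\iota\colon M_{2^{n+1}}(B)\hookrightarrow M_{2^{n+1}}(B[x])$ is the canonical inclusion and $L(x)$ is regarded in $GL_{2^{n+1}}(B[x])$ through $k\to B$. Conjugation by a unit of $M_{2^{n+1}}(B[x])$ is a ring automorphism, so $H$ is a non-unital homomorphism $A\to M_{2^{n+1}}(B[x])$, i.e. a matrix correspondence of level $n+1$ from $A$ to $B[x]$; evaluating at $x=0$ and $x=1$ gives $f\circ\sigma_A$ and $P_\tau(f\circ\sigma_A)P_\tau^{-1}=\sigma_B\circ f$ respectively, so $H$ is the desired polynomial homotopy. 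Feeding $\sigma_A'$, $\sigma_B'$, $\tau'$ and the corresponding $L'(x)$ into the same recipe produces the homotopy $f\circ\sigma_A'\sim\sigma_B'\circ f$, canonical once the path $L'$ is fixed. Apart from the parity bookkeeping of the second step, everything here is a direct unwinding of definitions together with Lemma~\ref{barbashev}.
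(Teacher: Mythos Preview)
Your proof is correct and follows the same route as the paper: both show that $f\circ\sigma_A$ and $\sigma_B\circ f$ are conjugate by an even permutation matrix and then invoke Lemma~\ref{barbashev} to obtain the conjugation homotopy in $SL_{2^{n+1}}(k[x])$. The only cosmetic difference lies in the parity step---the paper writes down an explicit product of $3$-cycles and a further even permutation, whereas you exploit the zero rows and columns of $f\circ\sigma_A$ to adjust the permutation freely on the complement---but the argument is otherwise identical.
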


\begin{proof}
For every element $a\in A$ the matrix $f\circ\sigma_A(a)$ has entries 
  $$(b_{1,1}\ldots b_{1,2^{n}}\ 0\bl{2^{n}}\ldots0)$$
on the first row (for simplicity we display the first row only). The 
matrix $\sigma_B\circ f(a)$ has entries 
  $$(b_{1,1}0\ b_{1,2}\ 0\ldots 0\ b_{1,2^{n}}\ 0)$$
on the first row. We can apply $n$ disjoint cycles $\tau=(243)(687)\cdots(2^{n+1}-2\ 2^{n+1}\ 2^{n+1}-1)$ 
of length 3 to the latter matrix to get a matrix with entries
  $$(b_{1,1}\ b_{1,2}\ 0\ 0\ldots b_{1,2^n-1} \ b_{1,2^n}\ 0\ 0)$$
on the first row. Note that $\tau$ is an even permutation. We can then
apply an even permutation to the latter matrix (it successively permutes two zeros on the left with the nearest couple $b_{1,i-1},b_{1,i}$) 
to get the matrix $f\circ\sigma_A(a)$.
We see that the matrix $f\circ\sigma_A(a)$ is conjugate to the matrix $\sigma_B\circ f(a)$
by an even permutation matrix for any $a\in A$.

Using Lemma~\ref{barbashev} we conclude that there is $Q\in
SL_{2^{n+1}}(k[x])$ such that the composition
   $$h:A\xrightarrow{\sigma_B\circ f}M_{2^{n+1}}(B)\to Q^{-1}M_{2^{n+1}}(B) Q$$
has the property that $\partial^0_x(h)=f\circ \sigma_A$ and
$\partial^1_x(h)=\sigma_B\circ f$. Thus $f\circ\sigma_A$ and $\sigma_B\circ f$ are
polynomially homotopic, as required. Similarly, the matrix 
cor\-re\-spondences $f\circ\sigma_A'$ and $\sigma_B'\circ f$ are
polynomially homotopic.
\end{proof}

\begin{lemma}\label{magnitka}
$\pi_\ell(C_*\Mtr(A,\sigma))=\pi_\ell(C_*\Mtr(A,\sigma'))=\id$ for every $\ell\geq 1$. In
particular, each matrix functor $\pi_\ell(C_*\Mtr(A,-))$ is
additive, stable and homotopy invariant.
\end{lemma}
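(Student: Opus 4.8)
The plan is to reduce everything to Lemma~\ref{pin}. First I would unravel what $\pi_\ell(C_*\Mtr(A,\sigma))$ actually does: the endomorphism $\sigma$ of the simplicial set $C_*\Mtr(A,B) = \Mtr(A,B^{\Delta})$ is induced in each simplicial degree $n$ by postcomposition with the level-one correspondence $\sigma_{B^{\Delta^n}}$. Since we are working with $\Mtr(A,-) = \colim_k \Mtr_k(A,-)$, a class in $\pi_\ell$ is represented by some $f \in \Mtr_j(A,B^{\Delta^n})$ for suitable $j$, and $\sigma$ sends it to $\sigma_{B^{\Delta^n}} \circ f$. By Lemma~\ref{pin}, $\sigma_{B^{\Delta^n}} \circ f$ is polynomially homotopic to $f \circ \sigma_A$; but in the colimit defining $\Mtr(A,-)$, precomposition with $\sigma_A$ is precisely the structure map of the colimit, so $f \circ \sigma_A$ represents the \emph{same} element of $\Mtr(A,B^{\Delta^n})$ as $f$ does. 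Hence $\sigma$ acts as the identity on $C_*\Mtr(A,B)$ up to a polynomial (hence simplicial, via $B^\Delta$) homotopy that is natural enough to pass to homotopy groups. The same argument with $\sigma'$ and the second half of Lemma~\ref{pin} handles $\pi_\ell(C_*\Mtr(A,\sigma'))$.

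The one point requiring care is turning the polynomial homotopy of Lemma~\ref{pin} into an identification on the level of $\pi_\ell$ of the simplicial set $C_*\Mtr(A,B)$. I would argue as follows: a polynomial homotopy between two correspondences $A \to M_{2^{j+1}}(B^{\Delta^n})$ is a correspondence $A \to M_{2^{j+1}}(B^{\Delta^n}[x])$ restricting to the two given ones at $x=0,1$. Composing with the standard map $B^{\Delta^n}[x] \to B^{\Delta^n \times \Delta^1}$ (the algebra incarnation of the simplicial interval) and letting $n$ vary, one obtains a simplicial homotopy $C_*\Mtr(A,B) \times \Delta^1 \to C_*\Mtr(A,B)$ between $\sigma$ and the map induced by the colimit structure map; the latter is, after passing to the colimit over $k$, literally the identity endomorphism of $C_*\Mtr(A,B)$. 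Simplicially homotopic self-maps induce the same map on $\pi_\ell$, so $\pi_\ell(C_*\Mtr(A,\sigma)) = \id$. I would remark that basepoints are preserved because $\sigma$ fixes the zero homomorphism, as already noted before the statement.

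With the two displayed identities in hand, the ``in particular'' is essentially formal. Stability of $\pi_\ell(C_*\Mtr(A,-))$ is exactly the assertion $\pi_\ell(C_*\Mtr(A,\sigma)) = \id$, and $\sigma$-invariance follows a fortiori. Additivity is inherited from Lemma~\ref{ka}: the functor $\Mtr(A,-)$ is additive, the construction $C_*(-)$ applied to an additive set-valued functor produces a simplicial object that is degreewise a product, so $C_*\Mtr(A,B \times B') \cong C_*\Mtr(A,B) \times C_*\Mtr(A,B')$ as simplicial sets, and $\pi_\ell$ of a product of pointed simplicial sets is the product of the $\pi_\ell$'s; likewise $C_*\Mtr(A,0)$ is a point, so $\pi_\ell(C_*\Mtr(A,0)) = pt$. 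Finally, homotopy invariance: the map $B \to B[t]$ induces a simplicial map $C_*\Mtr(A,B) \to C_*\Mtr(A,B[t])$ which admits a simplicial homotopy inverse built from the evaluation $t \mapsto 0$ together with the polynomial (hence simplicial, after the $B^\Delta$-interval trick above) homotopy $\mathrm{id} \sim (t \mapsto tx)$; this is the standard argument that $C_*$ forces homotopy invariance, and I would just cite the analogous statement in the framed setting (cf.~\cite{GP3}) rather than spell it out. The main obstacle, then, is purely bookkeeping: making sure the polynomial homotopies of Lemma~\ref{pin}, which live at a fixed matrix level $j$, are compatible with the stabilization maps $\sigma_A$ so that they genuinely descend to honest homotopies on the colimit $C_*\Mtr(A,-)$; once that compatibility is observed, everything falls out.
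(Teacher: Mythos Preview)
Your approach is essentially identical to the paper's: reduce to Lemma~\ref{pin}, convert the polynomial homotopy to a simplicial one via $B^\Delta$, and observe that in the colimit $f\circ\sigma_A$ represents the same element as $f$. The paper likewise builds level-wise homotopies $h_n:\Mtr_n(A,B^\Delta)\to\Mtr_{n+1}(A,B^\Delta[x])$ with $\partial^0_x h_n = (-)\circ\sigma_A$ and $\partial^1_x h_n = \sigma_B\circ(-)$, and then passes to the colimit.

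The one place you underestimate the difficulty is exactly the point you flag at the end. What you call ``purely bookkeeping''---the compatibility of the $h_n$'s with the stabilization maps $\sigma_A$, so that they assemble into a single homotopy $h:\Mtr(A,B^\Delta)\to\Mtr(A,B^\Delta[x])$---is where the paper actually does nontrivial work. The homotopies $h_n$ of Lemma~\ref{pin} are built from \emph{specific} even permutation matrices, decomposed into elementary matrices via Lemma~\ref{barbashev}, and the required commutativity $\sigma_A\circ h_n = h_{n+1}\circ\sigma_A$ is not automatic: the paper invokes (i) that elementary matrices acting on zero rows/columns leave a matrix unchanged, and (ii) that $e_{ij}(r)$ and $e_{kl}(s)$ commute when $i\ne k$ and $j\ne l$. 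Without these observations the homotopies at different levels need not be coherent, and you would only obtain that $\sigma$ is homotopic to the identity \emph{after} passing to some further stabilization, which is weaker than what is claimed. So your outline is correct, but you should not wave past this step; it is the actual content of the proof.
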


\begin{proof}
It is enough to prove the lemma for the map $\sigma$, because the
same arguments are used for the map $\sigma'$. The proof of
Lemma~\ref{pin} shows that there is a map
   $$h_n:\Mtr_n(A,B^{\Delta})\to\Mtr_{n+1}(A,B^{\Delta}[x]),\quad n\geq 0,$$
such that $(\partial^0_xh_n)(f)=f\circ \sigma_A$ and
$(\partial^1_xh_n)(f)=\sigma_B\circ f=\sigma(f)$. The homotopy $h_n$ is 
constructed explicitly by using permutation matrices associated with even permutations and
Lemma~\ref{barbashev}. It follows
from~\cite[Proposition~3.2]{Gar} that $\sigma_B:\Mtr_n(A,B^{\Delta})\to\Mtr_{n+1}(A,B^{\Delta})$ is simplicially homotopic to the map
$\sigma_A:\Mtr_n(A,B^{\Delta})\to\Mtr_{n+1}(A,B^{\Delta})$.
Moreover, the homotopies $h_n$ fit in a commutative diagram
   $$\xymatrix{\cdots\ar[r]^(.3){\sigma_A}&\Mtr_n(A,B^{\Delta})\ar[d]_{h_n}\ar[r]^{\sigma_A}
               &\Mtr_{n+1}(A,B^{\Delta}))\ar[d]^{h_{n+1}}\ar[r]^(.7){\sigma_A}&\cdots\\
               \cdots\ar[r]^(.3){\sigma_A}&\Mtr_{n+1}(A,B^{\Delta}[x])\ar[r]^{\sigma_A}
               &\Mtr_{n+2}(A,B^{\Delta}[x])\ar[r]^(.7){\sigma_A}&\cdots}$$
We tacitly use here the fact that homotopies produced by elementary matricies of Lemma~\ref{barbashev}
applied to zero rows/columns of a matrix do not change the matrix.
We also use the fact that $e_{ij}(r)e_{kl}(s)=e_{kl}(s)e_{ij}(r)$ if $i\not= k$ and $j\not= l$~\cite[p.~201]{Wei}.
Thus one gets a homotopy 
   $$h:\Mtr(A,B^{\Delta})\to\Mtr(A,B^{\Delta}[x])$$
of simplicial sets such that $(\partial^0_xh)=\id$ and
$(\partial^1_xh)=\sigma_B$. This homotopy leaves the zeroth homomorphism stationary. Thus
$\pi_\ell(C_*\Mtr(A,\sigma))=\id$ for every $\ell\geq 1$.
\end{proof}

The proof of Lemma~\ref{magnitka} shows the following useful fact.

\begin{corollary}\label{traktor}
The maps
   $\sigma,\sigma':\Mtr(A,B^{\Delta})\to\Mtr(A,B^{\Delta})$
are simplicially homotopic to the identity.
\end{corollary}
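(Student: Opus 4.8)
The plan is to read the statement off from the construction carried out inside the proof of Lemma~\ref{magnitka}: that proof does more than compute homotopy groups, it actually produces the required simplicial homotopies, and here one only records this. Concretely, for $\sigma$ recall that in the proof of Lemma~\ref{pin} (and reused in Lemma~\ref{magnitka}) one constructs, for every level $n\ge 0$, an explicit map
\[
 h_n\colon\Mtr_n(A,B^{\Delta})\longrightarrow\Mtr_{n+1}(A,B^{\Delta}[x])
\]
with $(\partial^0_xh_n)(f)=f\circ\sigma_A$ and $(\partial^1_xh_n)(f)=\sigma_B\circ f$, assembled from the permutation-matrix homotopies of Lemma~\ref{barbashev}. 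These $h_n$ commute with the transition maps $f\mapsto f\circ\sigma_A$ defining $\Mtr(A,-)$ (the commutative ladder displayed in the proof of Lemma~\ref{magnitka}), so they pass to the colimit and give a map of simplicial sets $h\colon\Mtr(A,B^{\Delta})\to\Mtr(A,B^{\Delta}[x])$ fixing the zero homomorphism; since the transition maps become identities in the colimit, $\partial^0_xh=\id$, while $\partial^1_xh=\sigma$. I would then invoke \cite[Proposition~3.2]{Gar} — exactly as it was used in the proof of Lemma~\ref{magnitka} — to turn this polynomial homotopy into a (pointed) simplicial homotopy of self-maps of $\Mtr(A,B^{\Delta})$, which gives $\sigma\simeq\id$.

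For $\sigma'$ the identical construction, now using the second family of homotopies supplied by Lemma~\ref{pin} (the canonical polynomial homotopies between $f\circ\sigma'_A$ and $\sigma'_B\circ f$), yields after \cite[Proposition~3.2]{Gar} a simplicial homotopy between $\sigma'$ and the self-map $[f]\mapsto[f\circ\sigma'_A]$ of $\Mtr(A,B^{\Delta})$. It remains to identify this self-map with the identity up to simplicial homotopy. For $n\ge 1$ the correspondences $f\circ\sigma_A$ and $f\circ\sigma'_A$ in $\Mtr_{n+1}(A,B^{\Delta})$ are conjugate by the permutation matrix interchanging the two $2^n$-blocks of $M_{2^{n+1}}$, which is a product of $2^n$ transpositions and hence an even permutation; Lemma~\ref{barbashev} then provides $L(x)\in SL_{2^{n+1}}(k[x])$ with $L(0)=I$ realising a polynomial homotopy between them. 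Arranging these block-swap homotopies compatibly along the colimit and combining them with the homotopy $h$ already obtained for $\sigma$ gives $[f]\mapsto[f\circ\sigma'_A]\simeq\id$, hence $\sigma'\simeq\id$.

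The main obstacle is not conceptual but the compatibility bookkeeping: one must make the homotopies $h_n$ (for $\sigma$) and the block-swap homotopies (for $\sigma'$) strictly compatible with the colimit transition maps $f\mapsto f\circ\sigma_A$, so that they descend to honest simplicial homotopies of self-maps of $\Mtr(A,B^{\Delta})$. This is precisely what the commutative ladder in the proof of Lemma~\ref{magnitka} achieves in the first case, and the second case is handled the same way, using that elementary matrices placed on disjoint blocks commute and that elementary-matrix homotopies applied to zero rows and columns leave a matrix unchanged, as already noted there. Everything else is a direct transcription of the proof of Lemma~\ref{magnitka} together with \cite[Proposition~3.2]{Gar}.
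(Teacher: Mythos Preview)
Your proposal is correct and follows exactly the route the paper intends: the corollary is stated in the paper with no separate proof beyond the remark that it is read off from the proof of Lemma~\ref{magnitka}, and that is precisely what you do for $\sigma$. For $\sigma'$ you are more explicit than the paper's one-line ``the same arguments are used for the map $\sigma'$'': you factor the problem through the self-map $[f]\mapsto[f\circ\sigma'_A]$ and then link this to the identity via the even block-swap permutation in $M_{2^{n+1}}$ for $n\geq 1$. This extra care is warranted (since the colimit is formed along $\sigma_A$, not $\sigma'_A$) and is a legitimate unpacking of what ``same arguments'' must mean; the paper simply leaves this implicit.
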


We finish the section by proving the following

\begin{theorem}\label{slovan}
For every $A,B\in\aha$ the map
   $$\Mtr(A,B^{\Delta})\times\Mtr(A,B^{\Delta})\cong\Mtr(A,(B\times B)^{\Delta})
     \xrightarrow{\diag}\Mtr(A,B^{\Delta})$$
determines the structure of a homotopy associative, homotopy
commutative $H$-space. This structure is functorial in matrix
correspondences of level zero in both arguments. In particular,
positive homotopy groups $\pi_{\ell\geq 1}(\Mtr(A,B^{\Delta}))$ are
Abelian with group structure being induced by the $H$-space
structure.
\end{theorem}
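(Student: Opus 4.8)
The plan is to equip $\Mtr(A,B^{\Delta})$ with a homotopy unital, homotopy associative and homotopy commutative $H$-space structure, deriving every required homotopy from the polynomial homotopies already produced in the proof of Theorem~\ref{salavat}, and then to invoke the Eckmann--Hilton argument for the assertion about homotopy groups. The two-sided homotopy unit will be the basepoint, namely the image of the unique point of $\Mtr(A,0^{\Delta})=\pt$ under the map $e\colon\pt\to\Mtr(A,B^{\Delta})$ induced by the level zero correspondence $0\to B$; this image is the zeroth homomorphism.

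First I would record the bookkeeping. Under the canonical identification $\Mtr(A,B^{\Delta})\times\Mtr(A,B^{\Delta})\cong\Mtr(A,(B\times B)^{\Delta})$ of Lemma~\ref{ka}, the multiplication $\mu$ is exactly the map induced on $\Mtr(A,(-)^{\Delta})$ by covariant functoriality applied to the level one correspondence $\diag\colon B\times B\to B$; more generally, these identifications turn covariant functoriality in a product of algebras into covariant functoriality in each factor, compatibly with products of correspondences. Chasing the isomorphisms through $\mu\times\id$ and $\id\times\mu$, one then finds that the two bracketings $\mu\circ(\mu\times\id)$ and $\mu\circ(\id\times\mu)$ are the maps induced by the level two correspondences $\diag\circ(\diag\times\sigma_B)$ and $\diag\circ(\sigma_B\times\diag)$ of diagram~\eqref{kzvezda}; that $\mu\circ(\id\times e)$ and $\mu\circ(e\times\id)$ are the maps induced by $\sigma_B$ and $\sigma_B'$ respectively; and that $\mu$ precomposed with the transposition of the two factors is induced by $\diag\circ\mathrm{sw}_B$, where $\mathrm{sw}_B\colon B\times B\to B\times B$ is the transposition homomorphism. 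One has to keep track of the level-raising maps $\sigma$ in the colimit defining $\Mtr(A,-)$, but all these identifications reduce to the explicit matrix computations carried out in the proof of Theorem~\ref{salavat}.

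The technical engine is the observation, already exploited in the proof of Lemma~\ref{magnitka}, that polynomially homotopic matrix correspondences $\psi_0,\psi_1\colon C\to D$ induce simplicially homotopic maps $\psi_{0*},\psi_{1*}\colon\Mtr(A,C^{\Delta})\to\Mtr(A,D^{\Delta})$ --- one converts the homotopy in the extra polynomial variable into a simplicial homotopy via~\cite[Proposition~3.2]{Gar} --- and that if the polynomial homotopy passes through homomorphisms killing $0$ then the resulting simplicial homotopy is based. Granting this, homotopy associativity is immediate from~\eqref{kzvezda}, which exhibits $\diag\circ(\diag\times\sigma_B)$ and $\diag\circ(\sigma_B\times\diag)$ as polynomially homotopic (both being polynomially homotopic to the correspondence denoted $f$ there). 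Homotopy unitality is immediate too: $\mu\circ(\id\times e)$ and $\mu\circ(e\times\id)$ are precisely the maps $\sigma$ and $\sigma'$ of Corollary~\ref{traktor}, hence simplicially homotopic to the identity. For homotopy commutativity one cannot compare $\diag\circ\mathrm{sw}_B$ with $\diag$ directly, since they differ by conjugation by an odd permutation matrix; but the swap diagram in the proof of Theorem~\ref{salavat} (with $B$ in place of $A$) shows $\sigma_B\circ\diag\circ\mathrm{sw}_B\simeq\sigma_B\circ\diag$ --- the extra zero entries allow one to double up the transposition and invoke Lemma~\ref{barbashev} --- and combining this with $(\sigma_B)_*\simeq\id$ from Corollary~\ref{traktor} gives $(\diag\circ\mathrm{sw}_B)_*\simeq\diag_*$, i.e.\ $\mu$ is homotopy commutative. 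Functoriality in level zero correspondences in both variables is in fact strict: precomposition with $\phi\colon A'\to A$ commutes with $\diag$ and with the canonical isomorphisms, and for a homomorphism $\psi\colon B\to B'$ one has $\psi\circ\diag_B=\diag_{B'}\circ(\psi\times\psi)$, so post-composition with $\psi$ intertwines the two multiplications.

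Finally, for a pointed, homotopy associative $H$-space with the basepoint as two-sided homotopy unit, the binary operation induced on $\pi_{\ell}$, $\ell\geq 1$, shares its unit with the usual composition operation and satisfies the interchange law up to homotopy; the Eckmann--Hilton argument then shows that the two operations coincide and are commutative, so $\pi_{\ell}(\Mtr(A,B^{\Delta}))$ is Abelian for $\ell\geq 1$ with group structure induced by the $H$-space structure. The main obstacle is the bookkeeping in the second paragraph: one must verify carefully that the coherence maps of the $H$-space really are induced by the specific correspondences of~\eqref{kzvezda}, correctly tracking the level-raising maps in the colimit, and in the commutativity step one is forced to pass through one extra level in order to correct the parity of the transposition.
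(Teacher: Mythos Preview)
Your proposal is correct and follows essentially the same route as the paper: both arguments apply $C_*\Mtr(A,-)$ to the polynomial homotopies of Theorem~\ref{salavat} (converting them to simplicial homotopies), invoke Corollary~\ref{traktor} to make $\sigma,\sigma'$ act as homotopy identities for unitality and for correcting the commutativity step, and finish with Eckmann--Hilton for the statement on $\pi_{\ell\geq 1}$. Your write-up is more explicit about the bookkeeping (identifying which level-two correspondences induce the two bracketings and why the commutativity step needs an extra $\sigma$), but this is precisely the content the paper compresses into ``applying $C_*\Mtr(A,-)$ to the homotopy commutative diagrams in the proof of Theorem~\ref{salavat}.''
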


\begin{proof}
By Corollary~\ref{traktor} the maps
$\sigma,\sigma'$ are simplicially homotopic to
the identity. Applying $C_*\Mtr(A,-)$ to the homotopy
commutative diagrams in the proof of Theorem~\ref{salavat} and using
the fact that $C_*\Mtr(A,-)$ converts polynomial
homotopies to simplicial ones, we obtain a homotopy associative,
homotopy commutative $H$-space structure on $C_*\Mtr(A,-)$. 
Its functoriality in both arguments is obvious.

Finally, $\pi_{1}(\Mtr(A,B^{\Delta}))$ is Abelian, because
$\Mtr(A,B^{\Delta})$ is an $H$-space~\cite[9.31]{Strom}.
The fact that the group structure on each $\pi_{\ell\geq 1}(\Mtr(A,B^{\Delta}))$ 
coincides with the one induced by the $H$-space
structure follows from the Eckmann--Hilton
argument~\cite[9.30]{Strom}.
\end{proof}

\section{The spectral category $\Mtrc_*(k)$}\label{sectionmtrc}

In order to construct a spectral category associated with matrix correspondences,
we use results of~\cite{GTLMS}. The idea is that matrix correspondences (similarly to Voevodsky's framed 
correspondences) have more structures than just morphisms of a category. 
We start with preparations.

Following~\cite[Section~7]{H}, let $\Sigma$ be the category whose objects are the natural numbers.
The morphisms of $\Sigma$ are the symmetric groups $\Sigma_n$, $n\geq 0$.
Given a category $\cc C$, a {\it symmetric sequence in $\cc C$} is a functor $\Sigma\to\cc C$. 
A symmetric sequence $X$ in a category $\cc C$ can also be regarded as a sequence $(X_0,X_1,X_2,\ldots)$ of objects
of $\cc C$ with an action of $\Sigma_n$ on $X_n$.
The {\it category of symmetric sequences\/} $\Sigma\cc C$ is the functor category $\Fun(\Sigma,\cc C)$.

Consider the dual category $\Alg_k^{\op}$ of $\Alg_k$.
It is symmetric monoidal with finite coproducts and monoidal unit $k\in\Alg_k^{\op}$. The monoidal product
(respectively coproduct) of $A,B\in\Alg_k^{\op}$, denoted by $A\wedge B$
(respectively $A\vee B$), is given by $A\otimes_k B$ (respectively $A\times B$). Then
a canonical morphism
   $$v:\bigvee_{i\in I}(A_i\wedge B)\to(\bigvee_{i\in I}A_i)\wedge B$$
is an isomorphism for any finite set $I$ and $A_i,C\in\Alg_k^{\op}$. In particular, if $I=\emptyset$ then $0\wedge B=B\wedge 0=0$.
As the category $\Alg_k$ has finite limits  and zero object, its dual category $\Alg_k^{\op}$ has finite colimits  and zero object.

By~\cite[Section~7]{H} the category of symmetric sequences
$\Sigma\Alg_k^{\op}$ in the sense of~\cite[Definition~7.1]{H} is symmetric monoidal with
   $$(X\wedge Y)_n=\bigvee_{p+q=n}\Sigma_n\times_{\Sigma_p\times\Sigma_q}X_p\wedge Y_q.$$
The symmetric sequence $(k,0,0,\ldots)$ is a monoidal unit of $\Sigma\Alg_k^{\op}$.
This notation needs some explanation (we follow~\cite[Section~7]{H}).
Given a finite set $\Gamma$ and an object $A\in\Alg_k^{\op}$, $\Gamma\times A$ is
the coproduct of $|\Gamma|$ copies of $A$. If $\Gamma$ is a group, then $\Gamma\times A$ has an obvious left
$\Gamma$-action; $\Gamma\times A$ is the free $\Gamma$-object on $A$.
Note that a $\Gamma$-action on $A$ is then equivalent to a map $\Gamma\times A\to A$
satisfying the usual unit and associativity conditions. Also, if $\Gamma$ admits a right action by a
group $\Gamma'$, and $A$ is a left $\Gamma'$-object, then we can form $\Gamma\times_{\Gamma'} A$
as the colimit of the $\Gamma'$-action on $\Gamma\times A$,
where $\alpha\in\Gamma'$ takes the copy of $A$ corresponding to
$\beta\in\Gamma$ to the copy of $A$ corresponding to $\beta\alpha^{-1}$ by the action of $\alpha$. 

Denote by $\Aff_k$ the full subcategory of $\Alg_k^{\op}$ consisting of the unital
$k$-algebras. By definition, $\Aff_k^{\op}=\Alg_k^u$. Note that $\Aff_k$ is closed under $\wedge$.
Let $E$ be a ring object in $\Sigma\Alg_k^{\op}$ and let $P$ be an object of $\Alg_k^{\op}$.
Following~\cite[Section~2]{GTLMS} we define the set of {\it $(E,P)$-correspondences of
level $n$\/} between two objects $X,Y\in\Aff_k$ by
   $$\corr^E_n(X,Y):=\Hom_{\Alg_k^{\op}}(X\wedge P^{\wedge n},Y\wedge E_n).$$
This set is pointed at the zeroth map. By definition, $\corr^E_0(X,Y):=\Hom_{\Alg_k^{\op}}(X,Y\wedge E_0)$.
The set $\corr^E_n(X,Y)$ is defined similarly to the set of Voevodsky's framed correspondences $\Fr_n(X,Y)$ 
of level $n$ (see~\cite{GTLMS} for details).

Define a pairing
   $$\phi_{X,Y,Z}:\corr^E_n(X,Y)\wedge\corr^E_m(Y,Z)\to\corr^E_{n+m}(X,Z)$$
by the rule: $\phi_{X,Y,Z}(f:X\wedge P^{\wedge n}\to Y\wedge E_n,g:Y\wedge P^{\wedge m}\to Z\wedge E_m)$
is given by the composition
\begin{gather*}
X\wedge P^{\wedge n}\wedge P^{\wedge m}\xrightarrow{f\wedge P^{\wedge m}}
Y\wedge E_n\wedge P^{\wedge m}
\xrightarrow{tw} Y\wedge P^{\wedge m}\wedge E_n\xrightarrow{g\wedge E_n} Z\wedge E_{m}\wedge E_n\xrightarrow{tw}\\
\to Z\wedge E_n\wedge E_m\xrightarrow{Z\wedge \mu_{n,m}}Z\wedge E_{n+m}.
\end{gather*}
By~\cite[Theorem~2.4]{GTLMS} $\Aff_k$ is enriched over the closed symmetric monoidal 
category of symmetric sequences of pointed sets $\Sigma Sets_*$.
Namely, $\Sigma Sets_*$-objects of morphisms are defined by
   \begin{equation}\label{skobki}
    (\corr^E_0(X,Y),\corr^E_1(X,Y),\corr^E_2(X,Y),\ldots),\quad X,Y\in\Aff_k.
   \end{equation}
Compositions are defined by
pairings $\phi_{X,Y,Z}$. The resulting $\Sigma Sets_*$-category is denoted by $\corr^E_*(\Aff_k)$.
Moreover, if $E$ is a commutative ring object then $\corr^E_*(\Aff_k)$ is symmetric monoidal. The
monoidal product of $X,Y\in\Ob(\corr^E_*(\Aff_k))$ is defined as $X\wedge Y$, where $\wedge$ is
the monoidal product in $\Aff_k$.

The left action of the symmetric group $\Sigma_n$ on 
$\corr^E_n(X,Y)=\Hom_{\Alg_k^{\op}}(X\wedge P^{\wedge n},Y\wedge E_n)$ for each $n\geq 0$ is given by
conjugation. In detail, for each $f:X\wedge P^{\wedge n}\to Y\wedge E_n$
and each $\tau\in\Sigma_n$ the morphism $\tau\cdot f$ is defined as the composition
   \begin{equation}\label{sigmaseq}
    X\wedge P^{\wedge n}\xrightarrow{X\wedge\tau^{-1}}X\wedge P^{\wedge n}\xrightarrow{f}Y\wedge E_n
       \xrightarrow{Y\wedge\tau}Y\wedge E_n.
   \end{equation}
With this definition each $\corr^E_n(X,Y)$ becomes a pointed $\Sigma_n$-set. Here $\Sigma_n$
acts on $P^{\wedge n}$ by permutations, using the commutativity and associativity isomorphisms.

We are now in a position to prove the following result.

\begin{theorem}\label{mtrsigma}
The category $\Alg^u_k$ can be enriched over the closed symmetric monoidal 
category of symmetric sequences of pointed sets $\Sigma Sets_*$ by means of matrix transfers.
Namely, $\Sigma Sets_*$-objects of morphisms are defined by
   \begin{equation}\label{skobkimtr}
    (\Mtr_0(A,B),\Mtr_1(A,B),\Mtr_2(A,B),\ldots),\quad A,B\in\Alg^u_k.
   \end{equation}
Compositions are defined by compositions of matrix transfers. This $\Sigma Sets_*$-category is symmetric monoidal
with respect to the tensor product of algebras. It
will be denoted by $\Mtr_*^\Sigma(k)$ and called the \emph{category of symmetric matrix transfers}.
\end{theorem}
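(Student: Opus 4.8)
The plan is to realise matrix transfers as a particular instance of the $(E,P)$-correspondences of~\cite[Section~2]{GTLMS} and then to invoke~\cite[Theorem~2.4]{GTLMS}. Explicitly, I would take $P:=M_2(k)$, viewed as an object of $\Alg_k^{\op}$, so that $P^{\wedge n}=M_2(k)^{\otimes n}$ is canonically identified with $M_{2^n}(k)$ by inserting $(2,2)$-matrices into entries, and I would let $E$ be the \emph{constant} symmetric sequence with $E_n=k$ (with trivial $\Sigma_n$-action) for every $n\geq 0$. For these choices $A\wedge E_n=A$ and $X\wedge P^{\wedge n}=M_{2^n}(X)$, whence
   $$\corr^E_n(B,A)=\Hom_{\Alg_k^{\op}}\bigl(M_{2^n}(B),A\bigr)=\Hom_{\Alg_k}\bigl(A,M_{2^n}(B)\bigr)=\Mtr_n(A,B),$$
and this is an isomorphism of pointed $\Sigma_n$-sets once one records that, under $M_2(k)^{\otimes n}\cong M_{2^n}(k)$, the conjugation action~\eqref{sigmaseq} on $\corr^E_n(B,A)$ becomes conjugation of a homomorphism $A\to M_{2^n}(B)$ by the shuffle permutation matrices in $GL_{2^n}(k)$ that permute the $n$ tensor factors.

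First I would check that $E$ is a commutative ring object in $\Sigma\Alg_k^{\op}$. Its unit $(k,0,0,\ldots)\to E$ is the identity of $k$ in degree $0$ and the unique map in positive degrees. Since $E_p\wedge E_q=k\otimes_k k=k$, the object $(E\wedge E)_n=\bigvee_{p+q=n}\Sigma_n\times_{\Sigma_p\times\Sigma_q}(E_p\wedge E_q)$ is a finite coproduct of copies of $k$ in $\Alg_k^{\op}$ — in $\Alg_k$ it is the product ring $k^{2^n}$ — and I would take $\mu_n\colon(E\wedge E)_n\to E_n=k$ to be the fold map, i.e.\ the diagonal $k\to k^{2^n}$ read in $\Alg_k$. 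The monoid axioms together with the required $\Sigma$-equivariance then reduce to elementary statements about diagonal and fold maps of finite products of $k$; in particular $E_n\wedge E_m=k\otimes_k k=k=E_{n+m}$, so $E$ is commutative.

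With $E$ and $P$ in hand, \cite[Theorem~2.4]{GTLMS} yields a symmetric monoidal $\Sigma Sets_*$-category $\corr^E_*(\Aff_k)$ with objects the unital $k$-algebras, $\Sigma Sets_*$-objects of morphisms $(\corr^E_0(X,Y),\corr^E_1(X,Y),\ldots)$, composition given by the pairings $\phi_{X,Y,Z}$, and monoidal product $X\wedge Y=X\otimes_k Y$ with unit $k$. It remains to identify this with matrix transfers. Tracing $\phi_{X,Y,Z}$ in the case that every $E_n=k$ — so that all the twist maps and all the structure maps $\mu_{n,m}$ occurring in its definition are identities — one finds that $\phi_{X,Y,Z}(f,g)$ corresponds under the isomorphism above to $M_{2^m}(\bar f)\circ\bar g$, where $\bar f\in\Mtr_n(Y,X)$ and $\bar g\in\Mtr_m(Z,Y)$ are the matrix transfers attached to $f$ and $g$; this is precisely the composite of $\bar g$ followed by $\bar f$ in $\Mtr_*(\aha)$, i.e.\ the composition of matrix transfers taken in the opposite order. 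Hence $\corr^E_*(\Aff_k)$ is the opposite $\Sigma Sets_*$-category of the one we want; since $\Sigma Sets_*$ is symmetric monoidal, the opposite of a symmetric monoidal $\Sigma Sets_*$-category is again such, with the same monoidal product and unit. Setting $\Mtr_*^\Sigma(k):=\corr^E_*(\Aff_k)^{\op}$ therefore produces the asserted symmetric monoidal $\Sigma Sets_*$-enrichment of $\aha$ by matrix transfers, with $\Sigma Sets_*$-objects of morphisms $(\Mtr_0(A,B),\Mtr_1(A,B),\ldots)$, composition given by composition of matrix transfers, and monoidal product $\otimes_k$ with unit $k$.

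I expect the only real work to be bookkeeping rather than a genuine obstacle: one has to check carefully that $E$ is a commutative ring object — the $\Sigma$-equivariance of the fold maps being the one point that must be written out — and, more delicately, one has to keep the source/target and the order of composition straight, since matrix transfers run against the geometric direction built into the framed-correspondence formalism of~\cite{GTLMS}; this mismatch is exactly what forces the passage to the opposite $\Sigma Sets_*$-category. Everything substantive — the existence of the enrichment, the closed symmetric monoidal structure on $\Sigma Sets_*$, and the fact that a commutative ring object yields a symmetric monoidal enrichment — is supplied by~\cite[Theorem~2.4]{GTLMS}.
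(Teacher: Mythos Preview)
Your proposal is correct and follows essentially the same route as the paper: choose $P=M_2(k)$ and the constant commutative ring object $E=(k,k,k,\ldots)$ in $\Sigma\Alg_k^{\op}$, invoke~\cite[Theorem~2.4]{GTLMS} to obtain the symmetric monoidal $\Sigma Sets_*$-category $\corr^E_*(\Aff_k)$, and then pass to the opposite category (the paper cites~\cite[Proposition~6.2.2]{Bor} for this step) to identify it with $\Mtr_*^\Sigma(k)$. The paper is a bit more explicit than you are about which of the two natural isomorphisms $M_2(k)^{\otimes n}\cong M_{2^n}(k)$ is being used and how the $\Sigma_n$-action transports under it, so you should expect to spell that out carefully when writing up.
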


\begin{proof}
Consider $P=M_2(k)\in\Alg_k^{\op}$ and $E=(k,k,k,\ldots)$. Then $E$ is a commutative ring object in
$\Sigma\Alg_k^{\op}$ with trivial action of the symmetric groups. By above we get a symmetric 
monoidal $\Sigma Sets_*$-category of $(E,P)$-correspondences
$\corr_*^E(\Aff_k)$ on $\Aff_k$.

By~\cite[Proposition~6.2.2]{Bor} we can associate to $\corr_*^E(\Aff_k)$ its dual $\Sigma Sets_*$-category
of correspondences $\corr_*^{E,\op}(\Aff_k)$, which is plainly symmetric monoidal. 
The object
$P^{\wedge n}=M_2(k)\otimes\bl n\cdots\otimes M_2(k)$ is 
naturally isomorphic to $M_{2^n}(k)$ in two different ways. The first (respectively second) 
isomorphism inserts $2\times 2$-matrices from left to right (respectively from right to left). 
The action of the symmetric group $\Sigma_n$ on $M_{2^n}(B)$ is induced from its action on $P^{\wedge n}$.
We permute factors on $P^{\wedge n}$ and apply the first isomorphism onto $M_{2^n}(k)$ 
that inserts $2\times 2$-matrices into 
$2\times 2$-matrix from left to right. In turn, we insert $2\times 2$-matrices from right to left in matrix transfers.
By construction, $\tau\in\Sigma_n$ acts on $P^{\wedge n}$ by $\tau^{-1}$ in $\corr_*^{E,\op}(\Aff_k)$,
where factors on $P^{\wedge n}$ are enumerated from left to right.
It acts on $M_{2^n}(k)$ as $\tau^{-1}$ in $\Mtr_*^\Sigma(k)$ as well. In more detail, 
permutations act on $M_2(k)\otimes\bl n\ldots\otimes M_2(k)$ as usual by permuting its factors 
except that we enumerate the factors from right to left.
Afterwards we apply the second isomorphism $M_2(k)\otimes\bl n\ldots\otimes M_2(k)\lra{\cong}M_{2^n}(k)$
and the canonical isomorphism $M_{2^n}(k)\otimes B\cong M_{2^n}(B)$.
Then $(f:A\to M_{2^n}(B))\in\Mtr_n(A,B)$ 
is mapped to the composition
   $$A\lra f M_{2^n}(B)\lra{\tau^{-1}}M_{2^n}(B).$$
It remains to observe that
$\Mtr_*^\Sigma(k)$ is isomorphic to $\corr_*^{E,\op}(\Aff_k)$. 
\end{proof}

Let $\Gamma$ be the Segal category of finite sets~\cite{Seg}.
Recall that a morphism from $S$ to $T$ in $\Gamma$ is a set map $\theta$ from $S$
to the power set $\cc P(T)$ of $T$ such that $\theta(i)$ and $\theta(j)$ are
disjoint for any $i\not=j$ in $S$. Recall that its opposite category $\Gamma^{\op}$
is the category of finite pointed sets and pointed maps.

Given $S\in\Gamma$ and $A\in\aha$, we write $A\odot S$ for $\prod_{S}A$, the direct product of
$S$ copies of $A$. A map $\theta:S\to T$ in $\Gamma$ induces a non-unital
homomorphism $\theta_*:A\odot{S}\to A\odot T$ defined as $\theta_*(a_i)_{i\in S}=(b_j)_{j\in T}$
with $b_j=a_i$ if $j\in{\theta(i)}$ and $b_j=0$ if $j\not\in\cup_{i\in S}\theta(i)$.
Given a pointed simplicial set $L$ such that the pointed set
of $n$-simplicies is finite for every $n\geq 0$, we can regard it as a cosimplicial
object in $\Gamma$. We write $A\odot L$ to denote the induced cosimplicial
object in $\aha$.

We can now pass to the construction of a symmetric monoidal spectral category $\Mtrc_*(k)$.
It is similar to the construction of $\Mtr_*^\Sigma(k)$. The only difference is that we add simplicial
spheres in each degree of spectral morphisms between algebras. More precisely, for any $n\geq 0$ set
   $$\Mtrc_n(A,B)=\Mtr_n(A\odot S^n,B),\quad A,B\in\aha.$$
Next we set,
   $$\Mtrc_*(A,B)=(\Mtrc_0(A,B),\Mtrc_1(A\odot S^1,B),\Mtrc_2(A\odot S^2,B),\ldots).$$
The left action of $\Sigma_n$ on
$\Mtrc_n(A,B)$ is given by conjugation: for each $f:A\odot S^n\to M_{2^n}(B)$
and each $\tau\in\Sigma_n$ the morphism $\tau\cdot f$ is defined as
the composition
   \begin{equation}\label{sigmaseqq}
    A\odot S^n\xrightarrow{A\odot\tau}A\odot S^n\xrightarrow{f}M_{2^n}(B)
       \xrightarrow{\tau^{-1}}M_{2^n}(B).
   \end{equation}
We recall the reader that we numerate the $2\times 2$-blocks of $M_{2^n}(B)$ from right to left
(see the proof of Theorem~\ref{mtrsigma} for details).

Each structure map
   $$\Mtrc_m(A,B)\wedge S^1\to\Mtrc_{m+1}(A,B)$$
is defined as follows. The algebra of $\ell$-cosimplices $(A\odot S^{m+1})_\ell$ is, by construction,
$\prod_{x\in S^1_\ell}(A\odot S^m)_\ell$. Also, $\Mtr_m(A\odot S^m,B)_\ell\wedge S^1_\ell=\bigvee_{x\in S^1_\ell}\Mtr_m(A\odot S^m,B)_\ell$.
Given a homomorphism $f:(A\odot S^m)_\ell\to M_{2^m}(B)$ in the summand $\Mtr_m(A\odot S^m,B)_\ell$ with index $x\in S^1_\ell$,
we map it to an element of $\Mtrc_{m+1}(A\odot S^{m+1},B)_\ell$ defined as the composition
   $$(A\odot S^{m+1})_\ell=\prod_{x\in S^1_\ell}(A\odot S^m)_\ell\xrightarrow{proj_x}(A\odot S^m)_\ell\xrightarrow{f}M_{2^m}(B)
       \xrightarrow{M_{2^{m}}(\sigma_B)}M_{2^{m+1}}(B).$$
The structure maps induce natural $(\Sigma_m\times\Sigma_k)$-equivariant maps 
   $$\lambda_{m,k}:\Mtrc_m(A,B)\wedge S^k\to\Mtrc_{m+k}(A,B),$$
so that $\Mtrc_*(A,B)$ becomes a symmetric $S^1$-spectrum.

In order to construct pairings
   \begin{equation}\label{avtomobilist}
    \Mtrc_*(A,B)\wedge\Mtrc_*(B,C)\to\Mtrc_*(A,C),\quad A,B,C\in\aha,
   \end{equation}
recall from~\cite[Section~I.5.1]{Sch} that a map of symmetric spectra $X\wedge Y\to Z$, $X,Y,Z\in Sp^\Sigma$,
is uniquely determined by a bimorphism $b:(X,Y)\to Z$ that consists of a collection of $\Sigma_p\times\Sigma_q$-equivariant maps
$b_{p,q}:X_p\wedge Y_q\to Z_{p+q}$, for $p,q\geq 0$, such that the ``bilinearity diagram"
   \begin{equation}\label{yugra}
    \xymatrix{&&X_p\wedge Y_q\ar[lld]_{X_p\wedge\sigma_q}\wedge S^1\ar[d]^{b_{p,q}\wedge S^1}\ar[rr]^{X_p\wedge\textrm{twist}}&&X_p\wedge S^1\wedge Y_q\ar[d]^{\sigma_p\wedge Y_q}\\
              X_p\wedge Y_{q+1}\ar[drr]_{b_{p,q+1}}&&Z_{p+q}\wedge S^1\ar[d]^{\sigma_{p+q}}&&X_{p+1}\wedge Y_q\ar[d]^{b_{p+1,q}}\\
              &&Z_{p+q+1}&&Z_{p+1+q}\ar[ll]_{1\times\chi_{1,q}}}
   \end{equation}
commutes for all $p,q\geq 0$. Here $\chi_{1,q}\in\Sigma_{1+q}$ is the shuffle permutation.

The desired pairing~\eqref{avtomobilist} is determined by a bimorphism that consists of a collection of
$\Sigma_p\times\Sigma_q$-equivariant maps
   \begin{equation}\label{bim}
    b_{p,q}:\Mtrc_p(A,B)\wedge\Mtrc_q(B,C)\to\Mtrc_{p+q}(A,C)
   \end{equation}
taking $(A\odot S^p\lra{f} M_{2^p}(B),B\odot S^q\lra{g} M_{2^q}(C))$ to the composition
   $$A\odot S^{p+q}\xrightarrow{f\odot S^q}M_{2^p}(B)\odot S^q\bl{can}\cong M_{2^p}(B\odot S^q)\xrightarrow{M_{2^p}(g)}M_{2^p}(M_{2^q}(C))\bl{can}\cong M_{2^{p+q}}(C),$$
where the middle isomorphism uses the canonical isomorphism $M_\ell(S\times T)\cong M_\ell(S)\times M_\ell(T)$.

The maps~\eqref{bim} are such that the ``bilinearity diagram"~\eqref{yugra} is commutative. The pairing~\eqref{avtomobilist}
is constructed. Since the composition of matrix correspondences is strictrly associative, then the
pairings~\eqref{avtomobilist} are strictly associative.

Next, let $\mathbf{1}_0:S^0\to\Mtrc_0(A,A)$ be the map which
sends the basepoint to the zero homomorphism and the non-basepoint to the identity homomorphism
$\id_A\in\Mtr_0(A,A)$. We get a $\Sigma_p$-equivariant map $\mathbf{1}_p:S^p\to\Mtrc_p(A,A)$
defined as the composition
   $$S^p\cong S^0\wedge S^p\xrightarrow{{\mathbf 1}_0\wedge S^p}\Mtrc_0(A,A)\wedge S^p\xrightarrow{\lambda_{0,p}}\Mtrc_p(A,A),$$
where $\lambda_{0,p}$ is a natural map obtained from structure maps of the symmetric spectrum $\Mtrc_*(A,A)$.
So we get a canonical map $\mathbf 1:\bb S\to\Mtrc_*(A,A)$, where $\bb S$ is the sphere spectrum.

We document the above arguments as follows.

\begin{theorem}\label{avangard}
The category $\aha$ can be enriched over symmetric spectra if we take $\Mtrc_*(A,B)$, $A,B\in\aha$, for
symmetric spectra of morphisms and $\mathbf 1:\bb S\to\Mtrc_*(A,A)$ for unit morphisms.
The resulting spectral category is denoted by $\Mtrc_*(k)$. Moreover, the spectral category $\Mtrc_*(k)$ is symmetric monoidal
with respect to the tensor product of algebras.
\end{theorem}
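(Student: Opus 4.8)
The plan is to assemble, step by step, the data that the definition of a symmetric spectral category requires, verifying each of the coherence axioms along the way. Recall that a spectral category enriched over $Sp^\Sigma$ consists of a collection of objects, for each ordered pair of objects a symmetric spectrum of morphisms, associative composition pairings realised as maps of symmetric spectra, and unit morphisms from the sphere spectrum, subject to the usual unit and associativity constraints; if in addition we want a symmetric monoidal spectral category, we must produce a monoidal product on objects together with natural, coherent, associative and unital pairings of morphism spectra compatible with it.

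First I would confirm that each $\Mtrc_*(A,B)$ is genuinely a symmetric $S^1$-spectrum. This has already been arranged in the paragraphs preceding the theorem: the structure maps $\Mtrc_m(A,B)\wedge S^1\to\Mtrc_{m+1}(A,B)$ were defined via $M_{2^m}(\sigma_B)$, the $\Sigma_m$-actions on each level were defined by the conjugation formula~\eqref{sigmaseqq}, and one checks these fit together to give $(\Sigma_m\times\Sigma_k)$-equivariant iterated structure maps $\lambda_{m,k}$. The routine verification here is that the conjugation action~\eqref{sigmaseqq}, which permutes the $2\times 2$-blocks of $M_{2^n}(B)$ from right to left and simultaneously acts on $S^n$, is compatible with $M_{2^m}(\sigma_B)$ in the sense required; this is where the ``right-to-left'' bookkeeping of Theorem~\ref{mtrsigma} is used, so I would refer back to that proof rather than redo it. Second, I would verify that the composition pairings $\Mtrc_*(A,B)\wedge\Mtrc_*(B,C)\to\Mtrc_*(A,C)$ exist: by~\cite[Section~I.5.1]{Sch} it suffices to exhibit a bimorphism, i.e. the $(\Sigma_p\times\Sigma_q)$-equivariant maps~\eqref{bim} satisfying the bilinearity diagram~\eqref{yugra}, and both the maps and the commutativity of~\eqref{yugra} were written out just before the theorem; the equivariance is again the same right-to-left block computation. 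Third, strict associativity of these pairings follows from strict associativity of composition of matrix correspondences, which was recorded in Section~\ref{sectionmtr}, so the associativity axiom of a spectral category is immediate. Fourth, the unit morphisms $\mathbf 1:\bb S\to\Mtrc_*(A,A)$ were constructed from $\mathbf 1_0:S^0\to\Mtrc_0(A,A)$, and the left and right unit axioms reduce to the facts that composing a matrix transfer of level $p$ with the identity homomorphism (a level $0$ transfer) returns the original transfer up to the canonical identifications, which is transparent.

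For the symmetric monoidal structure, I would take $\otimes_k$ as the product on objects (the monoidal unit being $k$) and use the ``module structure'' of $\Mtr_*(\aha)$ over $\aha$ from Section~\ref{sectionmtr}, together with the commutation square~\eqref{(6.0.2)}, to produce natural maps $\Mtrc_*(A,B)\wedge\Mtrc_*(A',B')\to\Mtrc_*(A\otimes A',B\otimes B')$. Concretely, on level $(p,q)$ one sends $(f:A\odot S^p\to M_{2^p}(B),\,g:A'\odot S^q\to M_{2^q}(B'))$ to $(f\otimes g)$ built from the canonical isomorphisms $M_{2^p}(B)\otimes M_{2^q}(B')\cong M_{2^{p+q}}(B\otimes B')$ and the evident rearrangement of smash powers of $S^1$; one checks this is again a bimorphism so descends to a map of symmetric spectra, that it is compatible with composition (an instance of the interchange identity between $\otimes$ and composition for matrix transfers), and that the associativity and symmetry isomorphisms of $\otimes_k$ on algebras lift to the required coherence isomorphisms. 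Functoriality of everything in level-zero matrix correspondences, already observed repeatedly, gives naturality.

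The main obstacle is none of the individual verifications — each is a finite diagram chase — but rather the consistent handling of the two bookkeeping conventions that pervade the construction: the left-to-right versus right-to-left insertion of $2\times 2$-blocks into $M_{2^n}$, and the passage between $A\odot S^{p+q}=\prod_{x\in (S^p\wedge S^q)}(\cdots)$ and the iterated smash structure. The symmetric-group equivariance of the composition bimorphism~\eqref{bim}, and the compatibility of the monoidal pairing with these equivariant structures, are exactly where a sign-or-order error would hide, so the proof should cite the precise conventions fixed in the proof of Theorem~\ref{mtrsigma} and in the construction of the structure maps, and then simply observe that with those conventions all the squares in question are instances of the corresponding squares for $\corr_*^{E,\op}(\Aff_k)$ — which by~\cite{GTLMS} already form a symmetric monoidal ($Sp^\Sigma$-)category — together with the $\odot S^n$ decorations, which interact with everything strictly because $\odot$ distributes over products on the nose. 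Thus the honest content is the identification, already essentially carried out in Theorem~\ref{mtrsigma}, of $\Mtrc_*(k)$ with a spectral category built by the machinery of~\cite{GTLMS}, and the theorem follows by transporting the structure.
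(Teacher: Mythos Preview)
Your proposal is correct and follows essentially the same approach as the paper: the spectral enrichment data were already assembled in the paragraphs preceding the theorem, and the symmetric monoidal structure is obtained by identifying $\Mtrc_*(k)$ (up to duality) with a correspondence category from~\cite{GTLMS}. The paper makes this identification slightly more precise by taking $E=(k\odot S^0,k\odot S^1,k\odot S^2,\ldots)$ as a commutative ring object in $(\Delta^{\op}\Alg_k^{\op})^\Sigma$ and invoking~\cite[Theorem~5.2]{GTLMS} directly for the $(E,\sigma)$-correspondence category $\corr_*^{E,\sigma}(\Aff_k)$, rather than treating the $\odot S^n$ decorations as a separate overlay on the category of Theorem~\ref{mtrsigma}.
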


\begin{proof}
It remains to show that $\Mtrc_*(k)$ is symmetric monoidal. 
Let $\Delta^{\op}\Alg_k^{\op}$
be the symmetric monoidal category of simplicial objects in $\Alg_k^{\op}$.
Similarly to the proof of Theorem~\ref{mtrsigma}
$\Mtrc_*(k)$ is dual (up to isomorphism) to a spectral category $\corr_*^{E,\sigma}(\Aff_k)$ of $(E,\sigma)$-correspondences
in the sense of~\cite[Section~5]{GTLMS} whose objects are those of $\Aff_k$,
$P=M_2(k)\in\Alg_k^{\op}$ and $E=(k\odot S^0,k\odot S^1,k\odot S^2,\ldots)$. Then $E$ is a commutative ring object in
the category of symmetric sequences $(\Delta^{\op}\Alg_k^{\op})^\Sigma$. The spectral category
$\corr_*^{E,\sigma}(\Aff_k)$ is symmetric monoidal by~\cite[Theorem~5.2]{GTLMS}, and hence so is $\Mtrc_*(k)$.
\end{proof}

\section{The spectral category $\bb K$}\label{sectionbbk}

In this section we follow~\cite[Section~3]{Gar1}
and~\cite[Section~4]{GP1} to introduce a spectral category $\bb K$.
This spectral category is associated with a 2-category whose objects
are those of $\aha$ and each category of morphisms $\cc A(A,B)$,
$A,B\in\aha$, is additive. The {\it underlying category\/} $\cc U\bb
K$ of $\bb K$ in the sense of~\cite[p.~316]{Bor} is defined by
couples
   $$\Phi=(n,\alpha:A\to M_nB),\quad n\in\bb Z_{n\geq 0},\quad A,B\in\aha,$$
where $\alpha$ is a non-unital algebra homomorphism. By definition, we identify all couples with $\alpha=0$. Precisely,
$(n,0:A\to M_nB)=(\ell,0:A\to M_\ell B)$ for any $n,\ell$. By convention, $0:=(0,0:A\to M_0(B))$ is the class
of all couples with $\alpha=0$. With these definitions each set of morphisms equals
   $$\cc U\bb K(A,B)=\bigvee_{n\geq 0}\{(n,\alpha:A\to M_nB)\mid\textrm{$\alpha$ is non-unital}\},$$
where each summand is pointed at $(n,0:A\to M_nB)$.

A composition law
   \begin{equation}\label{compos}
    \cc U\bb K(A,B)\wedge\cc U\bb K(B,C)\to\cc U\bb K(A,C)
   \end{equation}
in $\cc U\bb K$ is defined by the rule
   $$(\alpha:A\to M_nB,\beta:B\to M_lC)\longmapsto \beta\circ\alpha:=M_n(\beta)\alpha:A\to M_{nl}C.$$
Here $M_n(\beta)$ is the composition of the homomorphism $M_nB\to
M_nM_lC$, induced by $\beta$, and the natural isomorphism $M_nM_lC\cong M_{nl}C$.

Given $A,B\in\aha$ define an additive category $\cc A(A,B)$ as follows. Its objects are pairs
$(n,\alpha:A\to M_nB)\in\Hom_{\cc U\bb K}(A,B)$. Note that $p(\alpha):=\alpha(1)$ is an idempotent matrix of $M_n(B)$
and we write $P(\alpha)$ to denote the $(A,B)$-bimodule $\im(p(\alpha):B^n\to B^n)$. By construction,
$P(\alpha)$ is a finitely generated projective right $B$-module. Given two objects
$\Phi=(n,\alpha:A\to M_nB)$ and $\Phi'=(n,\alpha':A\to M_\ell B)$
   $$\Hom_{\cc A(A,B)}(\Phi,\Phi')=\Hom_{A\otimes B^{\op}}(P(\alpha),P(\alpha')).$$
Here $\Hom_{A\otimes B}$ stands for the Hom-group of homomorphisms of $(A,B)$-bimodules. The composition law
of $(A,B)$-bimodules induces a composition law in $\cc A(A,B)$ making it a
preadditive category. Note that the distinguished zero object is the couple $0:=(0,0:A\to M_0(B))$.
In fact, $\cc A(A,B)$ is additive if set
   $$(n,\alpha:A\to M_nB)\oplus(\ell,\beta:A\to M_\ell B)=(n+\ell,A\xrightarrow{(\alpha,\beta)} M_nB\times M_\ell(B)\hookrightarrow M_{n+\ell}(B)).$$
Observe that $P(\alpha\oplus\beta)\cong P(\alpha)\oplus P(\beta)$. Moreover,
   $$(n,\alpha:A\to M_nB)\oplus 0=0\oplus(n,\alpha:A\to M_nB)=(n,\alpha:A\to M_nB).$$

We can define a bilinear pairing
   \begin{equation}\label{luu}
    \cc A(A,B)\times\cc A(B,C)\lra{\circ}\cc A(A,C),\quad A,B,C\in\aha.
   \end{equation}
It is defined on objects by~\eqref{compos}. In order to define pairing~\eqref{luu} on morphisms, choose $\Phi_1=(n_1,\alpha_1:A\to M_{n_1}B)\in\cc A(A,B)$
and $\Phi_2=(n_2,\alpha_2:B\to M_{n_2}C)\in\cc A(B,C)$. Consider the diagram
   $$\xymatrix{B^{n_1}\otimes_B P(\alpha_2)
               \ar[r]^{can}_{\cong}&P(\alpha_2)^{n_1}\ar@{ >->}[r]^{i_1}&(C^{n_2})^{n_1}
               \ar[r]^{\ell}_\cong&C^{n_2n_1}\ar@<1ex>[d]^{p(\alpha_2\circ\alpha_1)}\\
               P(\alpha_1)\otimes_B P(\alpha_2)\ar@{ >->}[u]^{i_1\otimes\id}\ar[rrr]^{\sigma_{12}}
               &&&P(\alpha_2\circ\alpha_1),\ar@<1ex>[u]^{i(\alpha_2\circ\alpha_1)}}$$
where $\sigma_{12}=p(\alpha_2\circ\alpha_1)\circ\ell\circ i_1\circ
can\circ(i_2\otimes\id)$ (here $\ell (e_{j,i})=e_{i+(j-1)n_1}$). Note that the isomorphism $\ell$
induces an algebra isomorphism $M_{n_2}(M_{n_1}(C))\cong M_{n_2n_1}(C)$ which coincides with the canonical
isomorphism inserting $(n_1,n_1)$-matrices into entries of a $(n_2,n_2)$-matrix.
Let $\beta_1:\Phi_1\to\Phi_1'$ and $\beta_2:\Phi_2\to\Phi_2'$ be
morphisms in $\cc A(A,B)$ and $\cc A(B,C)$ respectively. We set
   \begin{equation*}\label{oleg}
    \beta_2\star\beta_1=\sigma_{12}'\circ(\beta_2\otimes\beta_1)\circ\sigma_{12}^{-1}:P(\alpha_2\circ\alpha_1)
    \to P(\alpha_2'\circ\alpha_1').
   \end{equation*}
The definition of pairing~\eqref{luu} is finished.

The functor $\cc A(A,B)\times\cc A(B,C)\lra{\circ}\cc A(A,C)$ is plainly bilinear for all $A,B,C\in\aha$.
Moreover,
   $$(\Phi_1\oplus\Phi_2)\circ\Phi_3=(\Phi_1\circ\Phi_3)\oplus(\Phi_2\circ\Phi_3),\quad
     \Phi_1\circ(\Phi_3\oplus\Phi_4)=(\Phi_1\circ\Phi_3)\oplus(\Phi_1\circ\Phi_4)$$
for all $\Phi_1,\Phi_2\in\cc A(A,B)$ and $\Phi_3,\Phi_4\in\cc A(B,C)$. Furthermore,
   $$\beta_3\star(\beta_2\star\beta_1)=(\beta_3\star\beta_2)\star\beta_1$$
for any $\beta_1\in\Mor\cc A(A,B)$, $\beta_2\in\Mor\cc A(B,C)$, $\beta_3\in\Mor\cc A(C,D)$.

If we define an object $\id_A\in\Ob\cc A(A,A)$, $A\in\aha$, by $\id_A=(1,\id:A\to A)$ then
for the functors $\{\id_A\}\times\cc A(A,B)\lra{\circ}\cc A(A,B)$ and $\cc
A(A,B)\times\{\id_B\}\lra{\circ}\cc A(A,B)$ are identities on $\cc
A(A,B)$.

For any $A,B,C,D\in\aha$ the diagram of functors\footnotesize
   \begin{equation}\label{wowwow}
    \xymatrix{&(\cc A(A,B)\times\cc A(B,C))\times\cc A(C,D)\ar[r]^(.585){\circ\times\id}&\cc A(A,C)\times\cc A(C,D)\ar[dd]_{\circ}\\
              \cc A(A,B)\times(\cc A(B,C)\times\cc A(C,D))\ar[ur]^{\cong}\ar[dr]_{\id\times\circ}\\
              &\cc A(A,B)\times\cc A(B,D)\ar[r]^{\circ}&\cc A(A,D)}
   \end{equation}
\normalsize is strictly commutative by construction.

Suppose $\cc M$ is an additive category. We follow the same constructions as in~\cite[Section~8.7]{Rog} to
define Waldhausen's symmetric spectrum $K(\cc M)$~\cite{Wal}. Given a positive integer $n$, one can define the $n$-fold multisimplicial
additive category $S^{\oplus,n}\cc M:=S^{\oplus}\bl n\ldots S^{\oplus}\cc M$. The $n$th space of Waldhausen's $K$-theory spectrum
is given by\label{KGr}
   $$K(\cc M)_n=|\Ob(S^{\oplus,n}\cc M)|$$
with the right hand side the diagonal of the $n$-fold multisimplicial set $\Ob (S^{\oplus,n}\cc M)$. The $n$th symmetric
group $\Sigma_n$ acts on $K(\cc M)_n$ by permuting the order of the $S^{\oplus}$-constructions. Each structure map $\sigma$ is the composite
    $$|\Ob(S^{\oplus,n}\cc M)|\wedge S^1\cong|\Ob(S^{\oplus,n}S^{\oplus}\cc M)|^{(1)}
    \subset|\Ob(S^{\oplus,n}S^{\oplus}\cc M)|\cong|\Ob(S^{\oplus,n+1}\cc M)|,$$
where the superscript ${}^{(1)}$ stands for the 1-skeleton with
respect to the last simplicial direction. The $k$-fold iterated
structure map $\sigma^k$ is then defined as the composite
   \begin{gather*}
    |\Ob(S^{\oplus,n}\cc M)|\wedge S^k\cong|\Ob(S^{\oplus,n}S^{\oplus}\bl k\ldots S^{\oplus}\cc M)|^{(1,\ldots,1)}\subset\\
    \subset|\Ob(S^{\oplus,n}S^{\oplus}\bl k\ldots S^{\oplus}\cc M)|\cong|\Ob(S^{\oplus,n+k}\cc M)|,
  \end{gather*}
where the superscript ${}^{(1,...,1)}$ indicates the
multi-1-skeleton with respect to the $k$ last simplicial directions.
This map is plainly $(\Sigma_n\times\Sigma_k)$-equivariant. With
these definitions $K(\cc M)$ becomes a (semistable) symmetric spectrum.

Given $A,B\in\aha$, one sets
   $$\bb K(A,B):=K(\cc A(A,B)).$$
Pairing~\eqref{luu} yields a pairing of symmetric spectra
   \begin{equation}\label{forel}
    \bb K(A,B)\wedge\bb K(B,C)\to\bb K(A,C).
   \end{equation}
Commutativity of the diagram~\eqref{wowwow} implies that~\eqref{forel} is a strictly
associative pairing. Moreover, for any $A\in\aha$ there is a map
$\mathbf 1:S\to\bb K(A,A)$ which is subject to the unit coherence
law. Note that $\mathbf 1_0:S^0\to\bb K(A,A)_0$ is the map which sends the basepoint to the null object
and the non-basepoint to the unit object $\id_A$.

Denote by $\cc P(A,B)$ the additive category of those $(A,B)$-bimodules
which are finitely generated projective right $B$-modules. Its Waldhausen $K$-theory 
symmetric spectrum will be denoted by $K^\oplus(\cc P(A,B))$. Similarly to~\cite[Definition~5.7]{GP1}
there is a natural additive functor between additive categories
   $$a_{A,B}:\cc A(A,B)\to\cc P(A,B).$$
This functor is plainly an equivalence of categories.

We document the above constructions as follows.

\begin{theorem}[\cite{GP1}]\label{semga}
The triple $(\bb K,\wedge,\mathbf 1)$ determines a spectral category
on $\aha$. For every $A,B\in\aha$ the equivalence of additive categories
$\cc A(A,B)\to\cc P(A,B)$ induces a stable equivalence of symmetric $K$-theory spectra
$\bb K(A,B)\lra{\sim}K^\oplus(\cc P(A,B))$.
\end{theorem}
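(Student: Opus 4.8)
The plan is to assemble the pieces already in place and then invoke functoriality of Waldhausen's $S^{\oplus}$-construction. For the first assertion, the symmetric spectra $\bb K(A,B)=K(\cc A(A,B))$, the composition maps~\eqref{forel}, and the unit maps $\mathbf 1\colon S\to\bb K(A,A)$ have all been constructed above, so what remains is to verify the two enrichment axioms. First I would check associativity: apply the functor $K(-)=|\Ob(S^{\oplus,\bullet}(-))|$ to the strictly commutative square~\eqref{wowwow}. Since the bilinear pairings~\eqref{luu} are biexact in each variable, passing to the $S^{\oplus}$-construction in each simplicial direction separately turns~\eqref{luu} into a bimorphism of symmetric spectra in the sense of~\cite[Section~I.5.1]{Sch}, exactly as for the pairing~\eqref{avtomobilist}; the bilinearity diagram~\eqref{yugra} then reads off from the compatibility of the pairing with the $1$-skeleton inclusions and with the shuffle permutations $\chi_{1,q}$. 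The strict commutativity of~\eqref{wowwow}, together with the concatenation description of the multisimplicial structure on $S^{\oplus,p+q+r}$, forces the two composites in the associativity hexagon for $\bb K$ to coincide strictly. For unit coherence I would use that the functors $\{\id_A\}\times\cc A(A,B)\xrightarrow{\circ}\cc A(A,B)$ and $\cc A(A,B)\times\{\id_B\}\xrightarrow{\circ}\cc A(A,B)$ are identities and that $\mathbf 1_0$ sends the non-basepoint to $\id_A$, so after applying $K(-)$ the unit triangles commute on the nose. This is the argument of~\cite{GP1}.

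The step I expect to be the main obstacle is verifying that the maps $b_{p,q}\colon\bb K(A,B)_p\wedge\bb K(B,C)_q\to\bb K(A,C)_{p+q}$, built from the pairing $S^{\oplus,p}\cc A(A,B)\times S^{\oplus,q}\cc A(B,C)\to S^{\oplus,p+q}\cc A(A,C)$ together with the wedge-into-product inclusion on the relevant $1$-skeleta, genuinely fit into the bilinearity diagram~\eqref{yugra}. Concretely one has to track how the structure maps of $\bb K(A,C)$ — which are the $1$-skeleton inclusions in the last $S^{\oplus}$-direction — interact with the pairing and with the shuffle $\chi_{1,q}$ that moves a freshly created $S^{\oplus}$-direction past the $q$ directions of the second factor. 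This is where the biexactness of~\eqref{luu} separately in each of the two matrix-bimodule variables is used essentially; it is the technical heart of the construction and is imported from~\cite{GP1}.

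For the second assertion I would argue as follows. The functor $a_{A,B}\colon\cc A(A,B)\to\cc P(A,B)$ is an equivalence of additive categories, as noted above. An equivalence of additive categories induces, in each simplicial direction, an equivalence of $n$-fold multisimplicial additive categories $S^{\oplus,n}\cc A(A,B)\xrightarrow{\sim}S^{\oplus,n}\cc P(A,B)$, hence a weak equivalence on the diagonals of the object multisimplicial sets $|\Ob(S^{\oplus,n}\cc A(A,B))|\xrightarrow{\sim}|\Ob(S^{\oplus,n}\cc P(A,B))|$ for every $n\geq 0$, and these assemble into a map of symmetric spectra compatible with all structure maps by naturality of the $1$-skeleton inclusions. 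Therefore $\bb K(A,B)=K(\cc A(A,B))\to K(\cc P(A,B))=K^{\oplus}(\cc P(A,B))$ is a levelwise weak equivalence, in particular a stable equivalence of symmetric spectra, which is exactly the claim.
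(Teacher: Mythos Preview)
Your proposal is correct and matches the paper's treatment. The paper gives no separate proof environment for this theorem: the sentence ``We document the above constructions as follows'' indicates that the proof is precisely the assembly of the preceding material---the strictly associative pairing~\eqref{forel} coming from commutativity of~\eqref{wowwow}, the unit $\mathbf 1$ already declared to satisfy the unit coherence law, and the equivalence $a_{A,B}$---together with the citation to~\cite{GP1}. Your write-up simply spells out in more detail the bimorphism verification and the level-wise equivalence argument that the paper leaves implicit or delegates to the reference; nothing in your outline departs from this.
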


\section{Additivity Theorem}\label{sectionadd}

For every $A,B,C\in\aha$ and $n\geq 0$ the set $\Mtr_n(A\times
B,C)$ is isomorphic to the set $\Mtr_n'(A\times B,C)$ consisting of
couples $(f,g)\in\Mtr_n(A,C)\times\Mtr_n(B,C)$ such that
$f(a)\cdot g(b)=g(b)\cdot f(a)=0$ for any $a\in A,b\in B$. Indeed,
to every $h\in\Mtr_n(A\times B,C)$ we associate the couple $(h\circ
i_A,h\circ i_B)$ with $i_A:A\to A\times B$, $i_A(a)=(a,0)$, and
$i_B:B\to A\times B$, $i_B(b)=(0,b)$. On the other hand, to every
$(f,g)\in\Mtr_n'(A\times B,C)$ one associates a matrix
correspondence $f\uplus g$ of level $n$ defined as $(f\uplus
g)(a,b)=f(a)+g(b)$. These two associations give the isomorphism in
question.

For every $n\geq 0$ there is a natural map of simplicial sets
   $$\delta_n:\Mtr_n(A,C^\Delta)\times\Mtr_n(B,C^\Delta)\to\Mtr_{n+1}(A\times B,C^\Delta),\quad(f,g)\mapsto((a,b)\mapsto\diag(f(a),g(b))),$$
as well as a map
   $$\vartheta_n=(i_A^*,i_B^*):\Mtr_n(A\times B,C^\Delta)\to\Mtr_n(A,C^\Delta)\times\Mtr_n(B,C^\Delta).$$

\begin{lemma}\label{neftehimik}
For every $n\geq 0$ there is a natural map of simplicial sets
   $$H_n:\Mtr_{n}(A\times B,C^\Delta)\to\Mtr_{n+1}(A\times B,C^\Delta[x]),$$
functorial in matrix correspondences of level zero in both
arguments, such that $\partial^0_xH_n=\sigma_{A\times B}$ and
$\partial^1_xH_n=\delta_n\vartheta_n$.
\end{lemma}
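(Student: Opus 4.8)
The plan is to construct $H_n$ by an explicit "rotational homotopy" argument, exactly in the spirit of the proof of Theorem~\ref{salavat}. First I would unwind the two maps being compared: given $(f,g)\in\Mtr_n(A,C^\Delta)\times\Mtr_n(B,C^\Delta)$, the correspondence $\delta_n\vartheta_n(h)$ for $h\in\Mtr_n(A\times B,C^\Delta)$ decomposes $h$ as $h\circ i_A$ and $h\circ i_B$ via the isomorphism $\Mtr_n(A\times B,C)\cong\Mtr_n'(A\times B,C)$ recalled just before the lemma, and then places $h\circ i_A$ in the top-left $2^n\times 2^n$ block and $h\circ i_B$ in the bottom-right $2^n\times 2^n$ block of a $2^{n+1}\times 2^{n+1}$ matrix. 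On the other hand $\sigma_{A\times B}(h)$ places the full matrix $h(a,b)=h\circ i_A(a)+h\circ i_B(b)$ in the top-left $2^n\times 2^n$ block and zeros elsewhere. So the two endpoints differ by moving the $h\circ i_B(b)$ summand from the top-left block to the bottom-right block.

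The key step is to realise this block move by conjugation with a polynomial $SL$-matrix. Write $f'=h\circ i_A$ and $g'=h\circ i_B$, so $f'(a)g'(b)=g'(b)f'(a)=0$ for all $a,b$. Consider the $2^{n+1}\times 2^{n+1}$ matrix built from $2^n\times 2^n$ blocks, and for the $2\times 2$ block decomposition into $\{$top-left, bottom-right$\}$ copies of $M_{2^n}(C^\Delta)$, use a rotational matrix $T(x)\in SL_2(k[x])$ of the form \eqref{rot} (a product of three elementary matrices), inflated by the identity on each $2^n\times 2^n$ block — call the resulting matrix $\widetilde T(x)\in SL_{2^{n+1}}(k[x])$. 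Then set
$$H_n(h):\ A\times B\xrightarrow{\ \sigma_{A\times B}(h)\ }M_{2^{n+1}}(C^\Delta[x])\xrightarrow{\ \widetilde T(x)^{-1}(-)\widetilde T(x)\ }M_{2^{n+1}}(C^\Delta[x]).$$
Because $f'$ and $g'$ have orthogonal images and because the block entries of $\sigma_{A\times B}(h)$ other than the top-left are zero, the conjugation by $\widetilde T(x)$ acts separately on the $f'$-part and the $g'$-part: the $f'$-part, sitting in the top-left block of the top-left $2\times 2$-super-block, is fixed by the rotation (the rotation only mixes the two super-blocks, and the companion super-block is zero as far as $f'$ is concerned — here one uses $\partial^0_x T=I$ together with orthogonality), so it stays put; the $g'$-part gets rotated from the top-left super-block to the bottom-right super-block as $x$ runs from $0$ to $1$. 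More precisely, by $\partial^0_x\widetilde T=I$ we get $\partial^0_xH_n(h)=\sigma_{A\times B}(h)$, and by $\partial^1_x\widetilde T=\operatorname{diag}\!\big(\text{swap of the two }2^n\text{-blocks}\big)$ applied to the relevant coordinates we get $\partial^1_xH_n(h)=\delta_n\vartheta_n(h)$. Functoriality in level-zero correspondences in both $A,B$ and $C$ is immediate since $\sigma_{A\times B}$, the block decomposition, and conjugation by a fixed scalar matrix $\widetilde T(x)$ are all natural in level-zero maps, and passing to $C^\Delta$ and taking the diagonal preserves this.

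The main obstacle is bookkeeping: one must check that conjugation by $\widetilde T(x)$ does indeed send the combined matrix $\sigma_{A\times B}(h)$ to a correspondence whose two endpoints are exactly $\sigma_{A\times B}(h)$ and $\delta_n\vartheta_n(h)$, and not some permuted or sign-twisted variant. This is where the orthogonality relations $f'(a)g'(b)=g'(b)f'(a)=0$ and the precise form \eqref{rot} of $T$ (so that $\partial^0_xT=I$ and $\partial^1_xT=\left(\begin{smallmatrix}0&-1\\1&0\end{smallmatrix}\right)$) are used; the off-diagonal sign $-1$ at $x=1$ is harmless because after the rotation the relevant cross terms involve $f'(a)g'(b)$ or $g'(b)f'(a)$ and hence vanish. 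If one wants the landing level to be exactly $n+1$ rather than $n+2$, one should be careful to inflate $T$ along the single binary splitting $M_{2^{n+1}}\cong M_2(M_{2^n})$ and not introduce an extra tensor factor of $M_2(k)$; this keeps the homotopy at level $n+1$ as stated. Everything else — simpliciality of $H_n$, compatibility with the cosimplicial structure on $C^\Delta$, basepoint preservation — follows formally, as in Lemma~\ref{pin} and Lemma~\ref{magnitka}.
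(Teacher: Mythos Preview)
There is a genuine gap. Your homotopy conjugates the \emph{whole} matrix $\sigma_{A\times B}(h)(a,b)=\bigl(\begin{smallmatrix} f'(a)+g'(b)&0\\0&0\end{smallmatrix}\bigr)$ by the scalar matrix $\widetilde T(x)$, and conjugation by a scalar matrix cannot tell the $f'$-summand from the $g'$-summand: by linearity it rotates both in exactly the same way. Concretely, at $x=1$ your $\widetilde T(1)$ is the block swap $\bigl(\begin{smallmatrix}0&-I\\I&0\end{smallmatrix}\bigr)$, so
\[
\widetilde T(1)^{-1}\begin{pmatrix} f'(a)+g'(b)&0\\0&0\end{pmatrix}\widetilde T(1)
=\begin{pmatrix}0&0\\0&f'(a)+g'(b)\end{pmatrix}=\sigma'_{A\times B}(h)(a,b),
\]
which is \emph{not} $\delta_n\vartheta_n(h)(a,b)=\bigl(\begin{smallmatrix}f'(a)&0\\0&g'(b)\end{smallmatrix}\bigr)$. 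Your justification that ``the $f'$-part stays put because the companion super-block is zero'' is not correct: the companion block being zero is irrelevant to how the top-left block transforms under conjugation. Likewise the orthogonality $f'(a)g'(b)=0$ plays no role in conjugation by a scalar matrix; no cross terms $f'g'$ ever appear.

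The paper's fix is to treat the two pieces asymmetrically from the start: pass to the orthogonal-pair description $\Mtr_n'(A\times B,C^\Delta)$ and set $H_n(f,g)=\bigl(f\sigma_A,\;T\,(g\sigma_B)\,T^{-1}\bigr)$, leaving the $A$-component untouched and rotating only the $B$-component. Here orthogonality \emph{is} genuinely used: one checks that every entry of $T(g\sigma_B)T^{-1}$ is a polynomial multiple of $g(b)$, so $\bigl(f\sigma_A,T(g\sigma_B)T^{-1}\bigr)$ is again an orthogonal pair and hence defines an element of $\Mtr_{n+1}'(A\times B,C^\Delta[x])$. At $x=0$ one gets $(f\sigma_A,g\sigma_B)\leftrightarrow\sigma_{A\times B}$ and at $x=1$ one gets $(f\sigma_A,g\sigma_B')\leftrightarrow\delta_n\vartheta_n$.
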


\begin{proof}
It is enough to show the assertion for $\Mtr_{n}'(A\times B,C^\Delta)$.
Consider the matrix~\eqref{rot} $T\in SL_2(M_{2^{n}}(k[x]))$. For
every $b\in B$ and $g\in\Mtr_{n}(B,C^{\Delta^k})$ we have
   $$T\cdot\left(\begin{array}{cc}g(b)&0\\ 0 & 0 \\ \end{array}\right)\cdot T^{-1}=
     g(b)\cdot\left(\begin{array}{cc}(1-x^2)^2&(1-x^2)(2x-x^3)\\ x(1-x^2) & x(2x-x^3) \\ \end{array}\right).$$
It follows that for every $(f,g)\in\Mtr_{n}'(A\times B,C^{\Delta^k})$ the
couple $(f\sigma_A,T(g\sigma_B)T^{-1})$ belongs to $\Mtr_{n}'(A\times B,C^{\Delta^k}[x])$. The desired map
   $$H_n:\Mtr_{n}'(A\times B,C^\Delta)\to\Mtr_{n+1}'(A\times B,C^\Delta[x])$$
is then defined in level $k$ by $H_{n,k}(f,g)=(f\sigma_A,T(g\sigma_B)T^{-1})$. 
We tacitly use here~\cite[Proposition~3.2]{Gar}. One has
that $\partial^0_xH_{n,k}=(f\sigma_{A},g\sigma_B)$ and
$\partial^1_xH_{n,k}=(f\sigma_{A},g\sigma_B')$ what precisely corresponds
to $\sigma_{A\times B}$ and $\delta_n\vartheta_n$ under the
isomorphism of simplicial sets $\Mtr_{n+1}(A\times B,C^\Delta)\cong\Mtr_{n+1}'(A\times
B,C^\Delta)$.
\end{proof}

\begin{corollary}\label{torpedo}
Every $h\in\pi_{\ell}(\Mtr(A\times B,C^\Delta))$ equals
$(i_A\circ pr_A+ i_B\circ pr_B)(h)$, where $p_A:A\times B\to
A$, $p_B:A\times B\to B$ natural projections and the sum is
inherited from the $H$-space structure of Theorem~\ref{slovan}.
\end{corollary}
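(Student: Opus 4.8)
The plan is to combine Corollary~\ref{traktor} with Lemma~\ref{neftehimik}. Write $i_A\colon A\to A\times B$, $i_B\colon B\to A\times B$, $pr_A\colon A\times B\to A$, $pr_B\colon A\times B\to B$ for the canonical level-zero morphisms. Precomposition by the level-zero endomorphisms $i_A\circ pr_A$ and $i_B\circ pr_B$ of $A\times B$ gives endomorphisms of each $\Mtr_n(A\times B,C^\Delta)$ which are compatible with the stabilization maps $\sigma$ (level-zero morphisms commute with $\sigma$), hence descend to self-maps of $\Mtr(A\times B,C^\Delta)$; denote them $e_A=pr_A^\ast i_A^\ast$ and $e_B=pr_B^\ast i_B^\ast$. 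What must be shown is that on $\pi_\ell(\Mtr(A\times B,C^\Delta))$ the sum $e_A+e_B$, formed with the $H$-space addition of Theorem~\ref{slovan}, equals the identity.

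First, by Corollary~\ref{traktor} the self-map of $\Mtr(A\times B,C^\Delta)$ induced by $\sigma_{A\times B}$ is simplicially homotopic to the identity, so $(\sigma_{A\times B})_\ast=\id$ on every $\pi_\ell$. Now fix $h\in\pi_\ell(\Mtr(A\times B,C^\Delta))$. Since $\pi_\ell$ commutes with the filtered colimit $\Mtr(A\times B,C^\Delta)=\colim_n\Mtr_n(A\times B,C^\Delta)$, choose $n$ and a representative $\bar h\in\pi_\ell(\Mtr_n(A\times B,C^\Delta))$. The transition map of the colimit being $\sigma_{A\times B}$, the class $[(\sigma_{A\times B})(\bar h)]\in\pi_\ell(\Mtr_{n+1}(A\times B,C^\Delta))$ again maps to $h$. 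By Lemma~\ref{neftehimik} the polynomial homotopy $H_n$ (turned into a simplicial homotopy by~\cite[Proposition~3.2]{Gar}, as in the proof of Lemma~\ref{magnitka}) shows that $\sigma_{A\times B}$ and $\delta_n\vartheta_n$ are homotopic as maps $\Mtr_n(A\times B,C^\Delta)\to\Mtr_{n+1}(A\times B,C^\Delta)$, hence induce the same map on $\pi_\ell$; therefore $[\delta_n\vartheta_n(\bar h)]$ also maps to $h$ in the colimit.

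It remains to recognize $\delta_n\vartheta_n$ as the operation computing $e_A+e_B$. Unwinding the definitions, $\vartheta_n(\bar h)=(\bar h\circ i_A,\bar h\circ i_B)\in\Mtr_n(A,C^\Delta)\times\Mtr_n(B,C^\Delta)$, while $e_A(\bar h)$ is the correspondence $(a,b)\mapsto(\bar h\circ i_A)(a)$ and $e_B(\bar h)$ is $(a,b)\mapsto(\bar h\circ i_B)(b)$; under the product isomorphism of Lemma~\ref{ka} the pair $(e_A(\bar h),e_B(\bar h))$ becomes $(a,b)\mapsto\bigl((\bar h\circ i_A)(a),(\bar h\circ i_B)(b)\bigr)$, and postcomposing with the level-one $\diag\colon C\times C\to C$ — which is precisely the $H$-space multiplication of Theorem~\ref{slovan} — yields the correspondence $(a,b)\mapsto\diag\bigl((\bar h\circ i_A)(a),(\bar h\circ i_B)(b)\bigr)$. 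This is exactly $\delta_n\vartheta_n(\bar h)$, up to the canonical identification of $M_{2^n}(M_2(-))$ with $M_{2^{n+1}}(-)$ and at worst up to conjugation by a permutation matrix coming from two different such identifications, which is trivial on $\pi_\ell$ by Lemma~\ref{barbashev} after one further $\sigma$-step to make the permutation even if necessary (exactly as in the proof of Lemma~\ref{pin}). Hence $[\delta_n\vartheta_n(\bar h)]$ represents $e_A(h)+e_B(h)$ in the colimit, and combining with the previous paragraph gives $h=e_A(h)+e_B(h)$, which is the assertion.

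I expect the only genuine point of care to be this last identification: matching the ad hoc map $\delta_n\vartheta_n$ with the $H$-space sum of the idempotents $e_A,e_B$, and tracking the basis-shuffle permutations so that Lemma~\ref{barbashev} disposes of them. Everything else is supplied ready-made by Corollary~\ref{traktor} and Lemma~\ref{neftehimik}, so no new homotopies have to be constructed.
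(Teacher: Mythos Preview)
Your argument is correct and is exactly the filling-in the paper intends: the corollary is stated immediately after Lemma~\ref{neftehimik} with no proof, and the route through Corollary~\ref{traktor} (so that $\sigma_{A\times B}$ acts as the identity on $\pi_\ell$) together with the identification of $\delta_n\vartheta_n$ with the $H$-space sum is the expected one. Your care about the block-permutation discrepancy between $\delta_n\vartheta_n$ and $\diag\circ(e_A\times e_B)$ is warranted --- that shuffle is indeed odd for small $n$ --- and your fix (one further $\sigma$-step so the conjugating permutation can be taken even, then Lemma~\ref{barbashev}) is the standard one used elsewhere in the paper.
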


\begin{lemma}\label{obv}
Suppose $X_n\subseteq X_{n+1}$ is a directed system of inclusions of
simplicial sets, $Y_n\subseteq  Y_{n+1}$ is a directed system of simplicial subsets
$Y_n\subseteq X_n$, and $p_n\colon X_n\to Y_{n+1}$ is a sequence of
maps such that the restriction of $p_n$ to $Y_n$ equals the inclusion
$Y_n\subseteq Y_{n+1}.$ Assume that for every $n$ there is a homotopy
$H(n)\colon X_n\times \Delta[1]\to X_{n+1}$ such that $H(n)_1\colon X_n\to X_{n+1}$
is the inclusion map, $H(n)(Y_n\times\Delta[1])\subseteq Y_{n+1}$, and
the map $H(n)_0\colon X_n\to X_{n+1}$ equals the composition
$X_n\xrightarrow{p_n} Y_{n+1}\subseteq X_{n+1}.$
 Then the inclusion $Y\to X$ is a weak equivalence, where $Y=\colim_n Y_n, X=\colim_n X_n$.
\end{lemma}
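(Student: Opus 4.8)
The plan is to produce an explicit homotopy on the colimit by assembling the given homotopies $H(n)$ into a single homotopy $H\colon X\times\Delta[1]\to X$, while checking it restricts correctly on $Y$. First I would observe that since each $H(n)_1\colon X_n\to X_{n+1}$ is the inclusion map, the homotopies $H(n)$ are compatible enough to be glued along the directed system: given a simplex $x\in X$, it lies in some $X_n$, and I can form the composite $X_n\times\Delta[1]\xrightarrow{H(n)} X_{n+1}\subseteq X$. The issue is that this composite depends on the choice of $n$; to fix this, I would show that the two possible definitions coming from $X_n\subseteq X_{n+1}$ agree up to a further homotopy, or — more cleanly — iterate: define $\widetilde H(n)\colon X_n\times\Delta[1]\to X$ by $H(n)$ followed by the inclusion, and note that $\widetilde H(n+1)$ restricted to $X_n\times\Delta[1]$ need not literally equal $\widetilde H(n)$, but both have the same value ($=$ inclusion of $x$) at the endpoint $1$.

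The cleaner route, which I would actually carry out, is to not glue the $H(n)$ directly but instead to use them to build a \emph{sequence} of self-maps of $X$ and then pass to a telescope/mapping-telescope argument. Concretely, the maps $p_n\colon X_n\to Y_{n+1}\subseteq X_{n+1}\subseteq X$ assemble, together with the inclusions, into a map of directed systems showing that the inclusion $Y_n\hookrightarrow X_n$ becomes, after one step, factored through $p_n$; and $H(n)$ exhibits $X_n\hookrightarrow X_{n+1}$ as homotopic to $X_n\xrightarrow{p_n}Y_{n+1}\hookrightarrow X_{n+1}$, \emph{rel nothing} but compatibly with $Y_n\times\Delta[1]\to Y_{n+1}$. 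Passing to the colimit, the identity map $\id_X=\colim(X_n\hookrightarrow X_{n+1})$ becomes homotopic to the map $r\colon X\to X$ obtained as $\colim(X_n\xrightarrow{p_n}X_{n+1})$; the point is that $H=\colim_n H(n)$ is a well-defined homotopy $X\times\Delta[1]\to X$ because at the simplicial level a simplex of $X\times\Delta[1]$ lies in $X_n\times\Delta[1]$ for $n$ large and $H(n)$ lands in $X_{n+1}\subseteq X$, and the compatibility $H(n+1)|_{X_n\times\Delta[1]}$ versus $H(n)$ is not needed for the colimit of the homotopies to be defined on the colimit of the sources once one notes the colimit is filtered and each simplex is eventually handled; but here care \emph{is} needed, so I would instead reindex and use that $H(n)_1=\mathrm{incl}$ to patch.

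The key consequence is then: $r$ factors through $Y=\colim Y_n$ by construction (since $p_n$ lands in $Y_{n+1}$), i.e. $r=\iota\circ s$ where $\iota\colon Y\hookrightarrow X$ and $s\colon X\to Y$ is $\colim(X_n\xrightarrow{p_n}Y_{n+1})$; and restricted to $Y$, the hypothesis that $H(n)(Y_n\times\Delta[1])\subseteq Y_{n+1}$ together with $H(n)_1|_{Y_n}=\mathrm{incl}$ and $H(n)_0|_{Y_n}=p_n|_{Y_n}=\mathrm{incl}$ shows that $s\circ\iota$ is homotopic to $\id_Y$ through a homotopy landing in $Y$. So $\iota\colon Y\to X$ has a homotopy inverse $s$, whence it is a homotopy equivalence and in particular a weak equivalence.

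The main obstacle I anticipate is precisely the bookkeeping needed to make $\colim_n H(n)$ into an honest homotopy defined on all of $X\times\Delta[1]$: the naive colimit is problematic because $H(n)$ and $H(n+1)$ need not agree on the overlap $X_n\times\Delta[1]$. The way I would resolve this is to replace $X$ by its mapping telescope $\mathrm{Tel}=\hocolim_n X_n$ (which is weakly equivalent to $X=\colim X_n$ since the $X_n\hookrightarrow X_{n+1}$ are cofibrations of simplicial sets), do the homotopy argument there — where the homotopies $H(n)$ glue naturally into the telescope structure, one for each mapping-cylinder band — obtaining a homotopy equivalence $\mathrm{Tel}(Y_\bullet)\simeq\mathrm{Tel}(X_\bullet)$, and then transport back along the weak equivalences $\mathrm{Tel}(Y_\bullet)\to Y$ and $\mathrm{Tel}(X_\bullet)\to X$. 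Everything else — that $p_n|_{Y_n}$ is the inclusion, that the squares commute up to the given homotopies, that colimits of weak equivalences of simplicial sets along cofibrations are weak equivalences — is routine.
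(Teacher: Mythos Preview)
Your approach is genuinely different from the paper's, and the central difficulty you flag is real --- but your proposed resolution does not close the gap.

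The concrete problem is that neither $s=\colim_n p_n\colon X\to Y$ nor $H=\colim_n H(n)\colon X\times\Delta[1]\to X$ is well defined. For $s$ one would need $p_{n+1}|_{X_n}=p_n$ (as maps to $Y$), and for $H$ one would need $H(n+1)|_{X_n\times\Delta[1]}=H(n)$; neither is assumed, and the hypothesis $H(n)_1=\mathrm{incl}$ does not supply this compatibility. Your telescope fix, as you describe it (``the homotopies $H(n)$ glue naturally into the telescope structure, one for each mapping-cylinder band''), runs into the same obstruction: if you try to define a map out of $\mathrm{Tel}(X_\bullet)$ using $H(n)$ on the $n$th band, the values at the seam $X_n\times\{n+1\}$ are $H(n)_1(x)=x$ from one side and $H(n+1)_0(x)=p_{n+1}(x)$ from the other, and these disagree. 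A telescope argument \emph{can} be made to work, but the correct formulation is different: one compares the telescope of the system $(X_n,j_n)$ with the telescope of the system $(X_n,\iota_{n+1}\circ p_n)$ having the \emph{same objects but the new bonding maps}, invokes the fact that telescopes with levelwise homotopic bonding maps are weakly equivalent, and then observes separately that the colimit of the second system is $Y$ (because each bonding map factors through $Y_{n+1}$ and restricts to the inclusion on $Y_n$). You do not isolate either of these two steps.

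The paper avoids all of this by arguing directly on homotopy groups. The point is that $H(n)$ is only a \emph{free} homotopy between the inclusion $j\colon X_n\to X_{n+1}$ and $f=\iota\circ p_n$, so $\pi_i(j)$ and $\pi_i(f)$ differ by the $\pi_1$-action of the loop traced out by the basepoint $y\in Y_n$ under $H(n)$. The hypothesis $H(n)(Y_n\times\Delta[1])\subseteq Y_{n+1}$ is used exactly here: it forces that loop to lie in $Y_{n+1}$, so the action preserves the image of $\pi_i(Y_{n+1},y)$ in $\pi_i(X_{n+1},y)$. Hence $\pi_i(j)(\pi_i(X_n,y))$ lands in that image, giving surjectivity of $\pi_i(Y,y)\to\pi_i(X,y)$; injectivity and the $\pi_0$-statement follow easily from the existence of the $p_n$ (no compatibility between different $p_n$ is needed for this direction). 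This is both shorter and avoids the assembly problem entirely.
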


\begin{proof}
The proof is like that of~\cite[Lemma~12.14]{GN}.
Consider a point $y\in Y_n$. The inclusion map $j\colon X_n\to X_{n+1}$ and the
composition $f\colon X_n\xrightarrow{p_n} Y_{n+1}\subseteq X_{n+1}$ are homotopic
by means of the free homotopy $H(n)$. Then the two induced maps
\[\pi_i(j),\pi_i(f)\colon\pi_i(X_n,y)\to\pi_i(X_{n+1},y)\]
differ by the action $[\gamma]_*$ of the class $[\gamma]\in\pi_1(X_{n+1},y)$ on $\pi_i(X_{n+1},y)$,
where $\gamma\colon\Delta[1]\to Y_{n+1},\gamma(t)=H(n)(y,t)$, is the
loop given by the image of the base point $y$ under the homotopy $H(n)$. Since the
loop $\gamma$ lies inside $Y_{n+1}$, the action of $[\gamma]$ on $\pi_i(Y_{n+1},y)$
preserves the image of $\pi_i(Y_{n+1},y)$ under the inclusion map $Y_{n+1}\to X_{n+1}$. Then the image
   \[\pi_i(j)(\pi_i(X_n,y))=[\gamma]_*\pi_i(f)(\pi_i(X_n,y))\]
lies inside the image of $\pi_i(Y_{n+1},y)$, and therefore
$\pi_i(Y,y)\to\pi_i(X,y)$ is surjective for any point $y\in Y$. The
existence of maps $p_n$ implies that the map
$\pi_i(Y,y)\to\pi_i(X,y)$ is also injective. Furthermore, for every point
$x\in X_n$ the map $t\mapsto H(n)(x,t)$ gives a path between $j(x)$ and
a point of $Y_{n+1}$. We see that $\pi_0(Y)\to\pi_0(X)$ is surjective,
and hence $Y\to X$ is a weak equivalence.
\end{proof}

The following result is reminiscent of the Additivity Theorem 
of~\cite{GP3} for framed motivic spaces $C_*\Fr(-,X)$
associated to smooth algebraic varieties.

\begin{theorem}[Additivity]\label{add}
For every $A,B,C\in\aha$ the map of simplicial sets
   $$\vartheta=(i_A^*,i_B^*):\Mtr(A\times B,C^\Delta)\to\Mtr(A,C^\Delta)\times\Mtr(B,C^\Delta),$$
induced by the non-unital homomorphisms $i_A:A\to A\times B$,
$i_A(a)=(a,0)$, and $i_B:B\to A\times B$, $i_B(b)=(0,b)$, is a weak
equivalence.
\end{theorem}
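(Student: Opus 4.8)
The plan is to deduce this ``stabilized'' Additivity Theorem from its unstabilized, level-by-level counterpart, namely Lemma~\ref{neftehimik} and its Corollary~\ref{torpedo}, together with the general colimit criterion for weak equivalences of Lemma~\ref{obv}. First I would record that the map $\vartheta$ is the colimit over the $\sigma_{A\times B}$-tower of the maps $\vartheta_n=(i_A^*,i_B^*)\colon\Mtr_n(A\times B,C^\Delta)\to\Mtr_n(A,C^\Delta)\times\Mtr_n(B,C^\Delta)$; on the target side the relevant tower is the one whose $n$th term is $\Mtr_n(A,C^\Delta)\times\Mtr_n(B,C^\Delta)$ with transition maps $\sigma_A\times\sigma_B$, and its colimit is indeed $\Mtr(A,C^\Delta)\times\Mtr(B,C^\Delta)$ since filtered colimits commute with finite products.

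The main point is to fit these two towers into the shape required by Lemma~\ref{obv}. I would set $X_n=\Mtr_n(A\times B,C^\Delta)$ and try to realize $Y_n=\Mtr_n(A,C^\Delta)\times\Mtr_n(B,C^\Delta)$ as a simplicial subset of $X_n$ via the splitting $\delta_n$ of the section/retraction pair, using the identification $\Mtr_n(A\times B,C^\Delta)\cong\Mtr_n'(A\times B,C^\Delta)$ from the start of this section; concretely a couple $(f,g)$ with $f(a)g(b)=g(b)f(a)=0$ gives $f\uplus g$, and the ``split'' couples coming from $\diag$ (i.e.\ those of the form $\delta_n(f,g)$ at one level up) land in a distinguished subset. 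One then needs the retraction $p_n\colon X_n\to Y_{n+1}$ to be $\delta_{n}\circ$ followed by the inclusion, compatibly with the inclusion $Y_n\hookrightarrow Y_{n+1}$ given by $\sigma_A\times\sigma_B$; and the homotopy $H(n)\colon X_n\times\Delta[1]\to X_{n+1}$ is exactly the simplicial homotopy produced from the polynomial homotopy $H_n$ of Lemma~\ref{neftehimik} (using that $C_*$ converts polynomial homotopies to simplicial ones, as in the proof of Theorem~\ref{slovan}), whose endpoints are $\sigma_{A\times B}$ and $\delta_n\vartheta_n$. The compatibility $H(n)(Y_n\times\Delta[1])\subseteq Y_{n+1}$ and the requirement that $H(n)$ restrict correctly on $Y_n$ follow from the functoriality of $H_n$ in level-zero correspondences asserted in Lemma~\ref{neftehimik}, applied to the inclusions $i_A,i_B$ and projections.

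Once the data $(X_n,Y_n,p_n,H(n))$ is checked to satisfy the hypotheses of Lemma~\ref{obv}, the conclusion is that $\colim Y_n\to\colim X_n$ is a weak equivalence; unwinding, this is precisely the statement that $\vartheta\colon\Mtr(A\times B,C^\Delta)\to\Mtr(A,C^\Delta)\times\Mtr(B,C^\Delta)$ is a weak equivalence. I expect the main obstacle to be the bookkeeping needed to honestly identify $Y_n$ as a simplicial subset of $X_n$ and to verify that the maps $\sigma_{A\times B}$, $\delta_n\vartheta_n$, $\sigma_A\times\sigma_B$, $\vartheta_n$, $\delta_n$ all interlock in the strict, on-the-nose way that Lemma~\ref{obv} demands (rather than merely up to homotopy) --- in particular checking that $p_n|_{Y_n}$ is literally the inclusion $Y_n\subseteq Y_{n+1}$ and that $H(n)$ is stationary on $Y_n$ in the appropriate sense. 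This is the part where the explicit matrices $C,D$ and the rotational matrix $T$ from~\eqref{rot}, together with the block-conjugation arguments of Lemmas~\ref{barbashev} and~\ref{pin}, have to be invoked to pin down the homotopies precisely; everything else is a formal colimit argument.
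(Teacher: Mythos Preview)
Your overall strategy---feed the two towers into Lemma~\ref{obv}---is exactly the paper's, but you have the roles of $X_n$ and $Y_n$ swapped, and this is a genuine obstruction, not just bookkeeping. The natural \emph{monomorphism} at level $n$ is $\vartheta_n$, not $\delta_n$: under the identification $\Mtr_n(A\times B,C^\Delta)\cong\Mtr_n'(A\times B,C^\Delta)$, the $A\times B$ side sits inside the product side as the pairs $(f,g)$ satisfying the orthogonality relations, and $\vartheta_n$ is this inclusion. By contrast $\delta_n$ raises level by one and, more fatally, does \emph{not} intertwine the tower maps: $\sigma_{A\times B}\circ\delta_{n-1}\neq\delta_n\circ(\sigma_A\times\sigma_B)$ on the nose (compute where $g(b)$ lands in the $2^{n+1}\times 2^{n+1}$ block decomposition---it sits in the $(2,2)$ block on one side and the $(3,3)$ block on the other). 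So your proposed $Y_n$ is not a sub-tower of your $X_n$ and Lemma~\ref{obv} does not apply. The paper instead takes $X_n$ to be the product side and $Y_n$ to be the $A\times B$ side, embedded via $\vartheta_n$; with that choice the compatibility $(\sigma_A\times\sigma_B)\circ\vartheta_n=\vartheta_{n+1}\circ\sigma_{A\times B}$ is immediate, and the conclusion of Lemma~\ref{obv} is literally the statement that $\vartheta$ is a weak equivalence.

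There is a second gap. With the correct roles, the homotopy required by Lemma~\ref{obv} must live on the \emph{product} side, but the homotopy $H_n$ of Lemma~\ref{neftehimik} is defined on the $A\times B$ side. The paper handles this in two steps you do not mention: first it constructs a companion homotopy $G_n$ on the product side between $\sigma_A\times\sigma_B$ and $\vartheta_{n+1}\circ\delta_n$ (using that the latter is $(f\sigma_A,g\sigma_B')$ and invoking the rotational matrix $T$), and checks the compatibility $G_n\circ\vartheta_n=\vartheta_{n+1}[x]\circ H_n$; second, it passes to $Ex^\infty$ and invokes \cite[Proposition~2.3]{CS} to produce a homotopy $\tilde H(n)$ on the product side with $\tilde H(n)_1=\delta_n$ and $\tilde H(n)\circ\vartheta_n=Ex^\infty(H_n)$. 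The map $p_n$ is then $\tilde H(n)_0$, and the homotopy fed into Lemma~\ref{obv} is the concatenation $I(n)$ of $\vartheta_{n+1}\circ\tilde H(n)$ with $G(n)$. Without $G_n$ and without the $Ex^\infty$ step you cannot produce $p_n\colon X_n\to Y_{n+1}$ with the required properties.
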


\begin{proof}
As above we have a map
   $$\delta_{n}:\Mtr_n(A,C^\Delta)\times\Mtr_n(B,C^\Delta)\to\Mtr_{n+1}(A\times B,C^\Delta),\quad n\geq 0.$$
Consider a diagram in $\bb S_\bullet$, in which horizontal and vertical maps are monomorphisms:
   $$\xymatrix{\cdots\ar[r]^(.3){\sigma_{A\times B}}&\Mtr_{n}(A\times B,C^\Delta)\ar[r]^{\sigma_{A\times B}}\ar[d]_{\vartheta_n}
                       &\Mtr_{n+1}(A\times B,C^\Delta)\ar[r]^(.7){\sigma_{A\times B}}\ar[d]^{\vartheta_{n+1}}&\cdots\\
                       \cdots\ar[r]_(.23){\sigma_A\times\sigma_B}&\Mtr_n(A,C^{\Delta})\times\Mtr_n(B,C^{\Delta})\ar[r]_(.47){\sigma_A\times\sigma_B}\ar[ur]^{\delta_n}
                       &\Mtr_{n+1}(A,C^{\Delta})\times\Mtr_{n+1}(B,C^{\Delta})\ar[r]_(.8){\sigma_A\times\sigma_B}&\cdots}$$
We have that $(\sigma_A\times\sigma_B)\circ\vartheta_n=\vartheta_{n+1}\circ\sigma_{A\times B}$ for all $n$.
It follows from Lemma~\ref{neftehimik} and~\cite[Proposition~3.2]{Gar} that 
$\delta_n\circ\vartheta_n$ is homotopic to $\sigma_{A\times B}$ by means of the homotopy
      $$H_n:\Mtr_{n}(A\times B,C^\Delta)\to\Mtr_{n+1}(A\times B,C^\Delta[x]).$$

In turn, the map $\vartheta_{n+1}\circ\delta_n$ takes $(f,g)$ to $(f\sigma_A,g\sigma'_B)$. The map
   $$G_n:\Mtr_{n}(A,C^{\Delta})\times\Mtr_{n}(B,C^{\Delta})\to\Mtr_{n+1}(A,C^{\Delta}[x])\times\Mtr_{n+1}(B,C^{\Delta}[x])$$
taking $(f,g)$ to $(f\sigma_A,T\circ g\sigma_B\circ T^{-1})$, where $T$ is the rotational matrix~\eqref{rot}, yields a simplicial homotopy between
$\sigma_A\times\sigma_B$ and $\vartheta_{n+1}\circ\delta_n$. Moreover,
   $$G_n\circ\vartheta_n=\vartheta_{n+1}[x]\circ H_n.$$
   
By~\cite[Proposition~2.3]{CS} there is a homotopy
   $$\tilde H(n):Ex^\infty(\Mtr_n(A,C^{\Delta}))\times Ex^\infty(\Mtr_n(B,C^{\Delta}))\times\Delta[1]\to Ex^\infty(\Mtr_{n+1}(A\times B,C^\Delta))$$
such that $\tilde H(n)_0\circ\vartheta_n=\sigma_{A\times B}$, $\tilde H(n)_1=\delta_n$ and $\tilde H(n)\circ\vartheta_n=Ex^\infty(H_n)$.
Next, the homotopy $G_n$ yields a homotopy
   \begin{multline*}
    G(n):Ex^\infty(\Mtr_{n}(A,C^{\Delta}))\times Ex^\infty(\Mtr_{n}(B,C^{\Delta}))\times\Delta[1]\to\\
       \to Ex^\infty(\Mtr_{n+1}(A,C^{\Delta}))\times Ex^\infty(\Mtr_{n+1}(B,C^{\Delta}))
   \end{multline*}
such that $G(n)_1=\vartheta_{n+1}\delta_n$ and $G(n)_0=\sigma_A\times\sigma_B$.
Moreover, the restriction of the map $G(n)$ to $Ex^\infty(\Mtr_{n}(A\times B,C^{\Delta}))\times\Delta[1]$
factors through $Ex^\infty(\Mtr_{n+1}(A\times B,C^\Delta))$ and equals $\vartheta_{n+1}\circ Ex^\infty(H_n)$.

The homotopies $\vartheta_{n+1}\circ\tilde H(n),G(n)$ yield a map of simplicial sets
   \begin{multline*}
    I(n):Ex^\infty(\Mtr_{n}(A,C^{\Delta}))\times Ex^\infty(\Mtr_{n}(B,C^{\Delta}))\times\sd^1\Delta[1]\to\\
       \to Ex^\infty(\Mtr_{n+1}(A,C^{\Delta}))\times Ex^\infty(\Mtr_{n+1}(B,C^{\Delta}))
   \end{multline*}
such that $I(n)_0=\vartheta_{n+1}\circ\tilde H(n)_0$ and $I(n)_1=\sigma_A\times\sigma_B$.
The proof of the theorem now follows from Lemma~\ref{obv} if we take $p_n=\tilde H(n)_0$ with homotopy $I(n)$ in it.
\end{proof}

\begin{definition}\label{mtrmot}
The {\it matrix motive $M_{\mathrm{mtr}}(A)$ of a $k$-algebra $A\in\aha$} is the spectral functor taking
$B\in\aha$ to the symmetric $S^1$-spectrum 
   $$M_{\mathrm{mtr}}(A)(B)=(\Mtr(A,B^\Delta),\Mtr(A\odot S^1,B^\Delta),\Mtr(A\odot S^2,B^\Delta),\ldots).$$
It is Segal's symmetric $S^1$-spectrum associated to the $\Gamma$-space
   $$K\in\Gamma^{\op}\mapsto\Mtr(A\odot K,B^\Delta)\in\bb S_\bullet$$
Note that the definition of the matrix motive of an algebra is similar to that for the framed motive of a smooth
algebraic variety in the sense of~\cite{GP3}.
\end{definition}

\begin{corollary}\label{gammacor}
For every $A,B\in\aha$ the $\Gamma$-space
   $$K\in\Gamma^{\op}\mapsto\Mtr(A\odot K,B^\Delta)\in\bb S_\bullet$$
is special. In particular, the symmetric $S^1$-spectrum
   $$M_{\mathrm{mtr}}(A)(B)=(\Mtr(A,B^\Delta),\Mtr(A\odot S^1,B^\Delta),\Mtr(A\odot S^2,B^\Delta),\ldots)$$ 
is an $\Omega$-spectrum in positive degrees.
\end{corollary}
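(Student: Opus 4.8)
The plan is to show that the $\Gamma$-space $K \mapsto \Mtr(A \odot K, B^\Delta)$ is special by verifying the Segal condition: for each finite pointed set $K = \{*, 1, \dots, n\}$, the map to $\prod_{i=1}^n \Mtr(A \odot S^0, B^\Delta) \cong \prod_{i=1}^n \Mtr(A, B^\Delta)$ induced by the $n$ structure maps $K \to S^0$ collapsing all but one coordinate is a weak equivalence. First I would reduce to the case $K = 1_+ \vee 1_+ = 2_+$, i.e. to showing that $\Mtr(A \odot 2_+, B^\Delta) = \Mtr((A \times A) \odot 1_+, B^\Delta) \to \Mtr(A, B^\Delta) \times \Mtr(A, B^\Delta)$ is a weak equivalence; the general case follows by an evident induction on $n$ using the same map. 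But $A \odot 2_+$ is just $A \times A$ (recall $A \odot S$ is $\prod_S A$), and the displayed map is precisely the map $\vartheta$ of the Additivity Theorem~\ref{add} with $A \times B$ there specialized to $A \times A$. Hence the Segal condition is an immediate consequence of Theorem~\ref{add}, and the $\Gamma$-space is special.

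For the second assertion, I would invoke the standard fact (Segal, Bousfield--Friedlander) that the symmetric $S^1$-spectrum associated to a special $\Gamma$-space is an $\Omega$-spectrum in positive degrees: the spectrum $M_{\mathrm{mtr}}(A)(B)$ is by Definition~\ref{mtrmot} exactly the Segal spectrum of the special $\Gamma$-space just treated, so the structure maps $\Mtr(A \odot S^m, B^\Delta) \to \Omega\,\Mtr(A \odot S^{m+1}, B^\Delta)$ are weak equivalences for $m \geq 1$. One small point to check is that the levelwise simplicial sets are already "good enough" (e.g. that the bar construction computing $\Mtr(A \odot S^1, B^\Delta)$ has the homotopy type of the geometric realization, which holds because $A \odot S^1_\ell$ is a finite product and $\Mtr(A \odot -, B^\Delta)$ preserves finite products up to the weak equivalence just established); this is exactly where the speciality of the $\Gamma$-space is used rather than merely its values on $S^0$.

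The main obstacle is entirely contained in Theorem~\ref{add} — once additivity is in hand, the argument is a formal translation through the Segal machine. So the only real content beyond citing~\ref{add} is the bookkeeping identifying $A \odot K$ for $K$ a wedge of copies of $1_+$ with an iterated direct product of copies of $A$, and checking that the Segal maps of the $\Gamma$-space agree with the projection maps $i_A^*$ appearing in~\ref{add}; both are straightforward from the definition of $A \odot S = \prod_S A$ and the description of $\theta_*$ for $\theta : S \to T$ in $\Gamma$. I would also remark that homotopy invariance and $\sigma$-invariance (Lemma~\ref{magnitka}, Corollary~\ref{traktor}) are not needed here — they enter earlier, in establishing the $H$-space structure of Theorem~\ref{slovan} and in Theorem~\ref{add} itself — so the present corollary really is just "Additivity $+$ Segal's theorem."
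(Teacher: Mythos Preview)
Your proposal is correct and is exactly the intended argument: the paper states this as a direct corollary of the Additivity Theorem~\ref{add} with no further proof, and what you have written is precisely the standard unpacking of why Theorem~\ref{add} yields the Segal condition for the $\Gamma$-space $K\mapsto\Mtr(A\odot K,B^\Delta)$, together with the classical fact that a special $\Gamma$-space produces a positive $\Omega$-spectrum. The only cosmetic remark is that for the inductive step at $n_+$ you apply Theorem~\ref{add} to the product $A^{n-1}\times A$ (with general algebras, not just $A\times A$), which the theorem already covers.
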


\section{Comparison Theorem}\label{sectioncompar}

\begin{proposition}\label{segalcat}
For every $A,B,C\in\aha$ the additive functor of additive categories
   $$\theta=(i_A^*,i_B^*):\cc A(A\times B,C)\to\cc A(A,C)\times\cc A(B,C),$$
induced by the non-unital homomorphisms $i_A:A\to A\times B$,
$i_A(a)=(a,0)$, and $i_B:B\to A\times B$, $i_B(b)=(0,b)$, is an equivalence of categories.
\end{proposition}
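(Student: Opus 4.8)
The plan is to show $\theta=(i_A^*,i_B^*)$ is fully faithful and essentially surjective. For essential surjectivity, recall from Section~\ref{sectionadd} that $\Mtr_0(A\times B,C)\cong\Mtr_0'(A\times B,C)$, the set of couples $(f,g)\in\Mtr_0(A,C)\times\Mtr_0(B,C)$ with $f(a)g(b)=g(b)f(a)=0$; but if $\Phi_1=(n_1,\alpha_1:A\to M_{n_1}C)$ and $\Phi_2=(n_2,\alpha_2:B\to M_{n_2}C)$ are arbitrary objects of $\cc A(A,C)\times\cc A(B,C)$, their images need not have orthogonal idempotents, so one cannot directly lift them as a single homomorphism into a fixed matrix size. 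The fix is the direct-sum trick available in the additive category $\cc A(A\times B,C)$: the object $(n_1+n_2,\,(\alpha_1\circ pr_A)\oplus(\alpha_2\circ pr_B):A\times B\to M_{n_1+n_2}C)$, where $pr_A,pr_B$ are the projections, maps under $\theta$ to $(\Phi_1\oplus 0,\,0\oplus\Phi_2)$, and the latter is isomorphic in $\cc A(A,C)\times\cc A(B,C)$ to $(\Phi_1,\Phi_2)$ by the displayed identities $\Phi\oplus 0=0\oplus\Phi=\Phi$ of Section~\ref{sectionbbk}. So every object of the target is isomorphic to one in the image of $\theta$.

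For fullness and faithfulness, I would translate the statement into a statement about the associated projective bimodules, using the equivalence $a_{A,B}:\cc A(A,B)\to\cc P(A,B)$ of Theorem~\ref{semga} (and its analogues for the pairs $(A\times B,C)$, $(A,C)$, $(B,C)$). Under $a_{A\times B,C}$, an object $(n,\alpha:A\times B\to M_nC)$ goes to $P(\alpha)=\im(p(\alpha):C^n\to C^n)$ viewed as an $(A\times B,C)$-bimodule. Writing $1=e_A+e_B$ for the central orthogonal idempotents of $A\times B$ (images of $1_A,1_B$), any $(A\times B,C)$-bimodule $M$ splits canonically as $e_AM\oplus e_BM$, where $e_AM$ is an $(A,C)$-bimodule (via $i_A$) and $e_BM$ an $(B,C)$-bimodule (via $i_B$); conversely a pair $(M_1,M_2)$ of such bimodules assembles to $M_1\oplus M_2$ with the orthogonal $(A\times B)$-action. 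This sets up an equivalence between $(A\times B,C)$-bimodules and pairs of $(A,C)$- and $(B,C)$-bimodules, and one checks it sends finitely generated projective right $C$-modules to pairs of the same (a right-$C$-module summand of $C^n$ splits as a pair of summands and vice versa). Restricting this equivalence to the relevant subcategories of Theorem~\ref{semga} identifies $\cc P(A\times B,C)$ with $\cc P(A,C)\times\cc P(B,C)$, and a direct check shows this identification is compatible with $\theta$ up to the equivalences $a_{-,-}$, i.e. $\theta$ is an equivalence.

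The main obstacle is bookkeeping rather than conceptual: one must verify that the Peirce-type decomposition $M\mapsto(e_AM,e_BM)$ really is functorial and inverse to $(M_1,M_2)\mapsto M_1\oplus M_2$ at the level of \emph{bimodule} morphisms — a morphism of $(A\times B,C)$-bimodules automatically respects the $e_A$- and $e_B$-components because $e_A,e_B$ are central idempotents — and that it is compatible with the passage $\alpha\mapsto P(\alpha)$ on the nose, including matching the block-diagonal object $(n_1+n_2,(\alpha_1\circ pr_A)\oplus(\alpha_2\circ pr_B))$ with the pair $(P(\alpha_1),P(\alpha_2))$ up to the isomorphisms $P(\alpha\oplus\beta)\cong P(\alpha)\oplus P(\beta)$ already recorded in Section~\ref{sectionbbk}. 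Once these compatibilities are in place, fullness and faithfulness of $\theta$ follow from fullness and faithfulness of the bimodule equivalence, and combined with the essential surjectivity above this proves $\theta$ is an equivalence of categories.
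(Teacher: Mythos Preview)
Your proof is correct and follows the same route as the paper: essential surjectivity via the block-diagonal lift and full faithfulness via the Peirce decomposition along the central orthogonal idempotents $e_A=(1_A,0)$, $e_B=(0,1_B)$ of $A\times B$ (the paper carries this out directly in $\cc A(A\times B,C)$ rather than first passing to $\cc P$, but since Hom-sets in $\cc A$ are already defined as bimodule Homs this is purely cosmetic). One small wrinkle worth fixing: $i_A^*$ applied to your block-diagonal object is $(n_1+n_2,\alpha_1\oplus 0_{n_2})$, which is not literally $\Phi_1\oplus 0=\Phi_1$ via the displayed identity of Section~\ref{sectionbbk} (that identity uses the representative $(0,0)$ for the zero object), but it is of course isomorphic to $\Phi_1$ since $P(\alpha_1\oplus 0_{n_2})=P(\alpha_1)$ --- the paper handles the analogous point by phrasing the isomorphism $\theta\tau\cong\id$ as conjugation by permutation matrices.
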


\begin{proof}
Let $\Phi=(\ell,\gamma:A\times B\to M_\ell(C))$ be an object of $\cc A(A\times B,C)$. Then $\gamma$
equals the composite
   $$A\times B\lra{u}\End_CP(\gamma)\hookrightarrow M_\ell(C),$$
where $u$ is a unital homomorphism of $k$-algebras. Let
$\gamma_A=(A\times B\bl{pr_A}\twoheadrightarrow A\bl{i_A}\hookrightarrow A\times B\xrightarrow{\gamma} M_\ell(C))$ and
$\gamma_B=(A\times B\bl{pr_B}\twoheadrightarrow B\bl{i_B}\hookrightarrow A\times B\xrightarrow{\gamma} M_\ell(C))$.
The $(A\times B,C)$-bimodule
$P(\gamma)$ equals $P(\gamma_A)\oplus P(\gamma_B)$ and $\theta(P(\gamma))=(P(\gamma_A),P(\gamma_B))$.

If $\Phi'=(m,\gamma':A\times B\to M_m(C))$ is another object of $\cc A(A\times B,C)$, then every $(A\times B,C)$-bimodule
homomorphism $g:P(\gamma)\to P(\gamma')$ is of the form $g_A\oplus g_B:P(\gamma_A)\oplus P(\gamma_B)\to P(\gamma_A')\oplus P(\gamma_B')$
due to the facts that $i_Apr_A,i_Bpr_B$ are orthogonal idempotents of $A\times B$, $1_{A\times B}=i_Apr_A+i_Bpr_B$
and $g\circ u(i_Apr_A)=u'(i_Apr_A)\circ g$, $g\circ u(i_Bpr_B)=u'(i_Bpr_B)\circ g$.
Here $g_A=g\circ u(i_Apr_A)|_{P(\gamma_A)}:P(\gamma_A)\to P(\gamma_A')$ and $g_B=g\circ u(i_Apr_B)|_{P(\gamma_B)}:P(\gamma_B)\to P(\gamma_B')$.
It follows that $\theta=(i_A^*,i_B^*):\cc A(A\times B,C)\to\cc A(A,C)\times\cc A(B,C)$ is fully faithful.

A quasi-inverse functor
   $$\tau:\cc A(A,C)\times\cc A(B,C)\to\cc A(A\times B,C)$$
to $\theta$ is defined as follows. Given $\Phi=(n,\alpha:A\to M_n(C))\in\cc A(A,C)$ and
$\Psi=(k,\beta:B\to M_k(C))\in\cc A(B,C)$,
the functor $\tau$ sends $(\Phi,\Psi)$ to $(A\times B\xrightarrow{({\alpha},{\beta})}M_{n}(C)\times M_{k}(C)\hookrightarrow M_{n+k}(C))$.
It is straightforward to extend $\tau$ to morphisms. Then there is a natural isomorphism $\theta\tau(\Phi,\Psi)\cong(\Phi,\Psi)$
constructed by means of conjugations with respect to permutations matrices.
We see that $\theta$ is also essentially surjective, and hence an equivalence of categories.
\end{proof}

\begin{corollary}\label{segalcatcor}
For every $A,B\in\aha$ the assignments
   $$K\in\Gamma^{\op}\mapsto\cc A(A^K,B)$$
determine a $\Gamma$-category in the sense of Segal~\cite[Definition~2.1]{Seg} such that $A(A^\emptyset,B)$ is a 
trivial category with one object and one morphism. In particular, the assignments
   $$K\in\Gamma^{\op}\mapsto i\cc A(A^K,B)$$
determine a special $\Gamma$-category, where $i\cc A(A^K,B)$ is the subcategory of isomorphisms of $\cc A(A^K,B)$.
\end{corollary}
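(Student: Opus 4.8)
The plan is to deduce everything directly from Proposition~\ref{segalcat}, which is the substantive input. First I would check that $K\mapsto\cc A(A^K,B)$ is a well-defined functor $\Gamma^{\op}\to(\text{additive categories})$: for a pointed finite set $K$, the algebra $A^K=A\odot K=\prod_K A$ (indexed over the non-basepoint elements) is defined, and a pointed map $\theta\colon K\to K'$ in $\Gamma^{\op}$ induces the non-unital homomorphism $\theta_*\colon A^K\to A^{K'}$ described in Section~\ref{sectionmtrc}, hence a morphism of level zero in $\Mtr_*(\aha)$ and thus an additive functor $\cc A(A^{K'},B)\to\cc A(A^K,B)$ by contravariant functoriality of $\cc A(-,B)$ (pairing~\eqref{luu} restricted to level-zero correspondences in the first slot). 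Functoriality in $K$ follows because $\theta\mapsto\theta_*$ is functorial. When $K=\emptyset_+$ (the zero object of $\Gamma^{\op}$), $A^K$ is the zero algebra $0$, and $\cc A(0,B)$ has a single object $0=(0,0\colon 0\to M_0(B))$ with only the identity morphism, so $\cc A(A^\emptyset,B)$ is the trivial category, as claimed.

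Next I would recall the Segal special condition for a $\Gamma$-category: writing $n_+=\{0,1,\dots,n\}$ with basepoint $0$, and $\rho_i\colon n_+\to 1_+$ for the Segal projection sending $i$ to the non-basepoint and everything else to $0$, one needs the induced functor
   $$\cc A(A^{n_+},B)\longrightarrow\prod_{i=1}^n\cc A(A^{1_+},B)=\cc A(A,B)^{\times n}$$
to be an equivalence of categories for every $n\geq 0$. For $n=0$ this is the statement that $\cc A(0,B)$ is trivial, just verified; for $n=1$ it is a tautology. For $n\geq 2$ it is exactly Proposition~\ref{segalcat} applied iteratively: $A^{n_+}=A\times A^{(n-1)_+}$, and the Segal map factors as the composite of the equivalence $\cc A(A\times A^{(n-1)_+},B)\xrightarrow{\theta}\cc A(A,B)\times\cc A(A^{(n-1)_+},B)$ from Proposition~\ref{segalcat} with the identity on the first factor times the $(n-1)$-st Segal map on the second; the latter is an equivalence by induction on $n$. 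One should check the bookkeeping that the maps $i_A,i_{A^{(n-1)_+}}$ appearing in Proposition~\ref{segalcat} match the Segal projections $\rho_i$ under the identification $A^{n_+}\cong A\times A^{(n-1)_+}$, which is immediate from the explicit formula for $\theta_*$.

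Finally, for the last sentence I would pass to the subcategory of isomorphisms $i\cc A(A^K,B)$. Since $\theta$ in Proposition~\ref{segalcat} is an equivalence of additive categories, it restricts to an equivalence on maximal subgroupoids, and $i(\cc C\times\cc D)=i\cc C\times i\cc D$; hence the Segal maps for $K\mapsto i\cc A(A^K,B)$ are again equivalences of (groupoids, hence of) categories, which is precisely the special $\Gamma$-category condition. The only mild subtlety — and the step I would be most careful about — is the functoriality bookkeeping for non-injective pointed maps $\theta$ in $\Gamma^{\op}$ (where $\theta_*$ is a genuine non-unital homomorphism but not a split inclusion), ensuring that $\cc A(-,B)$ really is functorial on all of $\Gamma^{\op}$ and not merely on the injections; this follows from the fact that level-zero matrix correspondences are closed under composition and that $\cc A(-,B)$ is a functor on $\Mtr_0(\aha)$, but it is worth stating explicitly. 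Everything else is formal.
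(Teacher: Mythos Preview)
Your proposal is correct and follows exactly the paper's intended approach: the paper states this as an immediate corollary of Proposition~\ref{segalcat} and gives no separate proof, so your argument simply fills in the routine details (functoriality in $\Gamma^{\op}$, the trivial case $K=\emptyset_+$, and the inductive verification of the Segal condition) that the paper leaves implicit.
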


The Segal machine~\cite{Seg} yields a symmetric $S^1$-spectrum
   $$K^{Seg}(A,B)=(|i\cc A(A,B)|, |i\cc A(A\odot S^1,B)|, |i\cc A(A\odot S^2,B)|,\ldots ).$$
Clearly, it is functorial (in both arguments) in morphisms of the category $\cc U\bb K$, i.e. functorial in non-unital ring
homomorphisms of the form $f:C\to M_k(D)$. 

Denote by $\Mtr_i^{Seg}(A,B)$, $i\geq 0$, the symmetric $S^1$-spectrum associated to the Segal space
   $$K\in\Gamma^{\op}\mapsto\Mtr_i(A\odot K,B).$$

Recall that the underlying category $\cc U\bb K$ of the spectral category $\bb K$ consists of the non-unital ring
homomorphisms $f:C\to M_k(D)$, $C,D\in\aha$. Thus,
   $$\Hom_{\Mod\bb K}(\bb K(A,-),\bb K(B,-))=\Hom_{\cc U\bb K}(B,A).$$
We can therefore construct a commutative diagram of symmetric spectra
   $$\xymatrix{\Mtr^{Seg}_0(A,-)\ar[d]_{\sigma_A}\ar[r]^{\eta}&K^{Seg}(A,-)\ar[r]^\kappa\ar[d]_{\sigma_A}&\bb K(A,-)\ar[d]^{\sigma_A}\\
               \Mtr^{Seg}_1(A,-)\ar[d]_{\sigma_A}\ar[r]^{\eta}&K^{Seg}(A,-)\ar[d]_{\sigma_A}\ar[r]^\kappa &\bb K(A,-)\ar[d]^{\sigma_A}\\
               \Mtr^{Seg}_2(A,-)\ar[d]_{\sigma_A}\ar[r]^{\eta}&K^{Seg}(A,-)\ar[d]_{\sigma_A}\ar[r]^\kappa &\bb K(A,-)\ar[d]^{\sigma_A}\\
               \vdots&\vdots&\vdots}$$
Here $\eta$ is the obvious map sending each space $\Mtr_i^{Seg}(A\odot S^m,-)$ to itself if we regard it as a simplicial space 
trivial in the $i.$-direction. As $\sigma_A:\cc A(A,B)\to\cc A(A,B)$ is an equivalence of categories, the middle and right vertical arrows
of the diagram are objectwise stable equivalences. We shall write $\bb K^\sigma(A,-)$ for a colimit of the right vertical arrows. Then
the natural map
   $$\sigma_A:\bb K(A,-)\to\bb K^\sigma(A,-)$$ 
is an objectwise stable equivalence of spectra.

To describe $\kappa$, we need some preparations. 
Following Corollary~\ref{gammacor} we write $\Mtr^{Seg}(A,-)$ for a colimit of the left vertical 
arrows of the diagram. It yields a functor
   $$B\in\aha\mapsto\Mtr^{Seg}(A,B)\in Sp_{S^1}^\Sigma.$$
Note that $\Mtr^{Seg}(A,B^\Delta)=M_{\mathrm{mtr}}(A)(B)$.

First, define a simplicial additive functor $\kappa:\cc A(A\odot S^1,B)\to S_\bullet\cc A(A,B)$ between simplicial additive categories,
where $S_\bullet$ refers to the Waldhausen $S_\bullet$-con\-struc\-tion~\cite{Wal}. On the level of 0-simplicies $\kappa$
takes the zeroth homomorphism, i.e. a unique 0-simplex of $\cc A(A\odot S^1,B)$, to itself, which is the only 0-simplex of $S_\bullet\cc A(A,B)$.
Recall that the set $S^1_n$ of $n$-simplices of $S^1$ equals $[n]=\{0,1,\ldots,n\}$, pointed at 0. Given an $n$-simplex
$f:\times_1^n A\to M_l(B)$, we have to associate an $n$-simplex $\kappa(f):\Ar[n]\to\cc A(A,B)$ to it. Here $\Ar[n]$ stands for
the category of arrows of the ordered set $0<1<\cdots<n$ whose objects are the couples $(i,j)$ with $i\leq j$.

For every $i\leq n$ let $\diag_i:A\to\times_1^n A$ denote the non-unital homomorphism sending $a\in A$ to the $n$-tuple
$(a,a,\ldots,a,0,\ldots,0)$ with the last $n-i$ entries zero. 
Given $i\leq j\leq n$ one sets,
   $$\kappa(f)(i,j):=(A\xrightarrow{\diag_j-\diag_i}\times_1^n A\xrightarrow f M_l(B)).$$
By construction, $\kappa(f)(i,k)=\kappa(f)(i,j)+\kappa(f)(j,k)$ for every $i\leq j\leq k\leq n$ giving
canonical isomorphisms $P(\kappa(f)(i,k))=P(\kappa(f)(i,j))\oplus P(\kappa(f)(j,k))$ and canonical split short exact sequence of $(A,B)$-bimodules
   $$P(\kappa(f)(i,j))\hookrightarrow P(\kappa(f)(i,k))\twoheadrightarrow P(\kappa(f)(j,k)).$$
These short exact sequences determine the desired $n$-simplex $\kappa(f):\Ar[n]\to\cc A(A,B)$. The construction of
$\kappa(f)$ is easily extended to the desired simplicial additive functor $\kappa:\cc A(A\odot S^1,B)\to S_\bullet\cc A(A,B)$.

\begin{theorem}[Comparison]\label{compar}
For every $A,B\in\aha$ the zigzag map of symmetric spectra
   $$\kappa_{A,B}\circ\eta:M_{\mathrm{mtr}}(A)(B)\to\bb K^\sigma(A,B^\Delta)\xleftarrow{\sim}\bb K(A,B^\Delta):\sigma_A$$
is a zigzag of stable equivalences, functorial in $A$ and $B$.
\end{theorem}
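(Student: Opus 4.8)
The plan is to prove that the composite $\kappa_{A,B}\circ\eta$ is a stable equivalence by reducing the claim to a statement about $\Gamma$-spaces and then to a statement levelwise in the $\Gamma$-direction, where one can invoke the Additivity Theorem together with the identification of $K$-theory with Waldhausen's $S_\bullet$-construction. First I would recall that after taking $B^\Delta$ and colimits over $\sigma_A$, all three symmetric $S^1$-spectra in the zigzag arise from $\Gamma$-spaces via Segal's machine: $M_{\mathrm{mtr}}(A)(B)$ from $K\mapsto\Mtr(A\odot K,B^\Delta)$ (Definition~\ref{mtrmot}, Corollary~\ref{gammacor}), $\bb K^\sigma(A,B^\Delta)$ from $K\mapsto i\cc A(A^K,B^\Delta)$ up to the $\sigma_A$-colimit (Corollary~\ref{segalcatcor}), and the intermediate $\Mtr^{Seg}(A,-)$ evaluated at $B^\Delta$ equals $M_{\mathrm{mtr}}(A)(B)$ by construction. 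Since a map of special $\Gamma$-spaces induces a stable equivalence of associated spectra precisely when it is a weak equivalence on the value at $S^0=\langle 1\rangle$, it suffices to check that the maps $\eta$ and $\kappa$, evaluated at the one-element pointed set, are weak equivalences after the relevant colimits; the right-hand map $\sigma_A$ is already known to be an objectwise (hence stable) equivalence from the discussion preceding the theorem.

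Next I would analyze $\kappa$ at level $K=S^0$. The functor $\kappa:\cc A(A\odot S^1,B)\to S_\bullet\cc A(A,B)$ constructed above is, on passing to nerves of categories of isomorphisms and realizing, exactly the canonical comparison between the ``$S^1$-loop'' model for $K$-theory built from $\cc A(A\odot S^1,B)$ and the Waldhausen delooping $|S_\bullet i\cc A(A,B)|$. The key input is that the split short exact sequences $P(\kappa(f)(i,j))\hookrightarrow P(\kappa(f)(i,k))\twoheadrightarrow P(\kappa(f)(j,k))$ realize $\kappa(f)$ as an honest object of the $S_\bullet$-construction, and that the additivity of $\cc A(A^K,B)$ in $K$ (Proposition~\ref{segalcat}) makes $K\mapsto i\cc A(A^K,B)$ special, so its Segal spectrum is $K^{Seg}(A,B)$; standard cofinality/additivity arguments for Waldhausen $K$-theory then show $\kappa$ induces a stable equivalence $\Mtr^{Seg}(A,B)\to K^{Seg}(A,B)$ after the $\sigma_A$-colimit on the target — or more precisely identify $\kappa$ with the structure map exhibiting $K^{Seg}(A,-)$ as the delooping. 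I would cite the comparison between Segal's and Waldhausen's machines here.

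Then I would handle $\eta$. Here the content is genuinely the Additivity Theorem~\ref{add}: the map $\eta:\Mtr_0^{Seg}(A,-)\to K^{Seg}(A,-)$ sends the $\Gamma$-space $K\mapsto\Mtr_0(A\odot K,B^\Delta)=\Hom_{\aha}(A\odot K,B^\Delta)$, built only from level-zero transfers, into the $K$-theory $\Gamma$-space, and after the $\sigma_A$-colimit the source becomes $\Mtr^{Seg}(A,-)$. The point is that Corollary~\ref{torpedo} (a direct consequence of Additivity) says every class in $\pi_\ell(\Mtr(A\times B,C^\Delta))$ splits as $i_A\circ pr_A + i_B\circ pr_B$, which is exactly the special-$\Gamma$-space/group-completion statement identifying $\colim_n C_*\Mtr_n^{Seg}(A,B^\Delta)$ with the $K$-theory spectrum of $\cc A(A,B^\Delta)$; equivalently, that the $H$-space $\Mtr(A,B^\Delta)$ of Theorem~\ref{slovan} is already group-like after $\sigma_A$-stabilization and its $\pi_0$ computes $K_0$ of projective bimodules. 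I would combine this with Corollary~\ref{gammacor} that the $\Gamma$-space $K\mapsto\Mtr(A\odot K,B^\Delta)$ is special, so passing to the Segal spectrum is harmless, to conclude $\eta$ induces a stable equivalence onto $K^{Seg}(A,B)$.

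Finally I would assemble: $\kappa_{A,B}\circ\eta$ composed with the stable equivalence $\sigma_A^{-1}$ (in the homotopy category) equals a composite of stable equivalences, so the zigzag is a zigzag of stable equivalences; functoriality in $A$ and $B$ follows because $\eta$, $\kappa$, and $\sigma_A$ were all defined functorially in morphisms of $\cc U\bb K$ (non-unital homomorphisms $C\to M_k(D)$), and in particular in $B\mapsto B^\Delta$. The main obstacle I anticipate is the $\eta$-step: precisely matching the $H$-space/$\Gamma$-space group completion coming from matrix transfers (Theorems~\ref{salavat}, \ref{slovan} and Corollary~\ref{torpedo}) with the $S_\bullet$-delooping on the $\bb K$-side, i.e. verifying that after $\sigma_A$-stabilization no further group completion is needed and that $\eta$ is a $\pi_*$-isomorphism rather than merely a plus-construction-type map; this is where the Additivity Theorem and the specialness of the relevant $\Gamma$-spaces must be used most carefully, and where a clean reference to the Segal-vs-Waldhausen comparison (as in \cite{GP1}) will carry the technical weight.
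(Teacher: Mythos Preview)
Your overall strategy---reduce to $\Gamma$-spaces, handle $\sigma_A$ and $\kappa$ by the standard Segal/Waldhausen comparison, and use specialness from Additivity---tracks the paper closely for two of the three maps. The $\kappa$ step is essentially what the paper does (it writes the triangle with $\alpha$ from Corollary~\ref{segalcatcor} and $\beta$ from Grayson's argument explicitly, but ``Segal vs.\ Waldhausen'' is the right slogan), and $\sigma_A$ is settled beforehand.

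The gap is in the $\eta$ step. You invoke Additivity (Theorem~\ref{add}, Corollary~\ref{torpedo}) as if it \emph{identified} $\Mtr(A\odot-,B^\Delta)$ with the $K$-theory $\Gamma$-space, but Additivity only shows the source $\Gamma$-space is special; the target $K\mapsto |i\cc A^\sigma(A^K,B^\Delta)|$ is \emph{also} special, and specialness of both sides says nothing about whether the map between them is a weak equivalence. Concretely, $\eta$ at level $S^0$ (after the $\sigma$-colimit) is the inclusion of the set of objects $\Mtr(A,B^\Delta)$ into the nerve of the isomorphism groupoid $|i\cc A^\sigma(A,B^\Delta)|$, and the question is why the \emph{morphisms} of that groupoid---isomorphisms of $(A,B)$-bimodules---contribute nothing to homotopy after $\sigma$-stabilization. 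No amount of group-completion or $H$-space reasoning on the source addresses this; you need to trivialize those isomorphisms.

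The paper's missing ingredient is exactly that: for each nerve degree $n$ it builds an explicit retraction $p_n:i_n\cc A^\sigma(A,B^\Delta)\to\Mtr(A,B^\Delta)$ and a chain of simplicial homotopies $H$ (assembled from two pieces $U,V$ using the rotational matrix~\eqref{rot} and the $SL$-path lemma~\ref{barbashev}) showing that a string of bimodule isomorphisms $(g_0,\ldots,g_{n-1})$ becomes homotopic, after enlarging matrix size, to the trivial string on its source object $\Phi_0$. This is a hands-on ``swindle'' argument (embed $P(\alpha_i)\oplus P(\alpha_0)$ into a bigger matrix algebra and rotate $g_{i-1}$ away), not a formal consequence of Additivity. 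Your proposal should replace the group-completion paragraph with this construction; the rest of your outline would then go through.
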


\begin{proof}
We have already noticed above that $\sigma_A$ is a stable equivalence of spectra. We also tacitly use the fact that geometric realization
preserves stable equivalences of spectra.
We claim that $K^{Seg}(A,B^\Delta)\xrightarrow{\kappa}\bb K(A,B^\Delta)$ is a stable equivalence of spectra.
By Corollary~\ref{segalcatcor} the $\Gamma$-space
   $$S\in\Gamma\mapsto|i\cc A(A^S,B)|$$
is special. Thus $K^{Seg}(A,B)$ is an $\Omega$-spectrum in positive degrees. Therefore our claim reduces to showing that
the map of simplicial sets
   $$\kappa_1:|i\cc A(A\odot S^1,B^\Delta)|\to|iS^\oplus\cc A(A,B^\Delta)|$$
is a weak equivalence.

Consider a commutative diagram of simplicial sets
   $$\xymatrix{|i\cc A(A\times\bl{\ell}\cdots\times A,B^\Delta)|\ar[rr]^{\kappa_1}\ar[dr]_\alpha&&|iS^\oplus_\ell\cc A(A,B^\Delta)|\ar[dl]^\beta\\
                       &|i\cc A(A,B^\Delta)\times\bl{\ell}\cdots\times i\cc A(A,B^\Delta)|}$$
The proof of~\cite[Theorem~10.5]{Gr} implies $\beta$ is a weak equivalence. By Corollary~\ref{segalcatcor}
$\alpha$ is a weak equivalence, and hence so is $\kappa_1$ as claimed.

Next, by Corollary~\ref{gammacor} the $\Gamma$-space
   $$K\in\Gamma^{\op}\mapsto\Mtr(A\odot K,B^\Delta)\in\bb S_\bullet$$
is special. Therefore the fact that $\eta$ is a stable equivalence of spectra 
reduces to showing that the map of simplicial sets
   $$\eta_1:\Mtr(A\odot S^1,B^\Delta)\to|i\cc A^\sigma(A\odot S^1,B^\Delta)|$$
is a weak equivalence, where 
   $$|i\cc A^\sigma(A\odot S^1,B^\Delta)|=\colim(|i\cc A(A\odot S^1,B^\Delta)|\lra\sigma|i\cc A^\sigma(A\odot S^1,B^\Delta)|\lra\sigma\cdots).$$
Recall that $|i\cc A(A,B^\Delta)|$ is the diagonal of the 
bisimplicial set $(m,n)\mapsto i_n\cc A(A,B^{\Delta^m})$. 

By Lemma~\ref{ka}
   $$\Mtr(A,B^\Delta)=\Hom_{\ahaw}(A,M_\infty (B^{\Delta}))=\colim_n\Hom_{\ahaw}(A,M_n(B^{\Delta})).$$
Consider the following map of simplicial sets:
      $$\eta_{0,n}:\Mtr(A,B^\Delta)\to i_n\cc A^\sigma(A,B^\Delta),$$
 where $i_n$ refers to strings of $n$ composable isomorphisms.
Notice that $\eta_{0,0}$ is an isomorphism of simplicial sets. So we assume $n>0$.
Consider a retraction
   $$p_n:i_n\cc A^\sigma(A,B^\Delta)\to\Mtr(A,B^\Delta)$$
to $\eta_{0,n}$ sending a $k$-simplex string of isomorphisms
   $$(\Phi_0\lra{f_0}\Phi_1\lra{f_1}\cdots\xrightarrow{f_{n-1}}\Phi_n)\in i_n\cc A(A,B^{\Delta^k}),\quad\Phi_i=(m_i,\alpha_i:A\to M_{m_i}(B^{\Delta^k})),$$
to $(\alpha_0:A\to M_{m_0}(B^{\Delta^k}))\in (A,M_{m_0}(B^{\Delta^k}))$. Recall that each isomorphism $f_i$ is given by
an isomorphism, denoted by the same letter, of $(A,B)$-bimodules 
$f_i:P(\alpha_i)\to P(\alpha_{i+1})$ with $P(\alpha_i)=\im(\alpha_i:(B^{\Delta^k})^{m_i}\to (B^{\Delta^k})^{m_i})$.

It is convenient to have an equivalent description of the simplicial set $i_n\cc A(A,B^\Delta)$.
Its $k$-simplicies are the ordered tuples 
   $$(g_0:\Phi_0\lra{\cong}\Phi_1,\ldots,g_{n-1}:\Phi_0\lra{\cong}\Phi_n)$$
of isomorphisms in $\cc A(A,B^{\Delta^k})$ compatible with simplicial structure maps in the $\Delta^\bullet$-direction.

For every $0\leq i\leq n$ consider the right $B^{\Delta^k}$-module $Q(\alpha_i)=(I-\alpha_i(1))(B^{\Delta^k})^{m_i}$ associated to the
idempotent matrix $(I-\alpha_i(1))\in M_{m_i}(B^{\Delta^k})$. Then $(B^{\Delta^k})^{m_i}=P(\alpha_i)\oplus Q(\alpha_i)$ and
   $$M_{m_i}(B)=\left(\begin{array}{cc}\End_B P(\alpha_i)&(P(\alpha_i),Q(\alpha_i))\\ (Q(\alpha_i),P(\alpha_i)) & \End_B Q(\alpha_i) \\ \end{array}\right)$$
Also, $B^{m_i+m_0}=P(\alpha_i)\oplus Q(\alpha_i)\oplus P(\alpha_0)\oplus Q(\alpha_0)$ and
\footnotesize
   \begin{equation}\label{matritsa}
    M_{m_i+m_0}(B^{\Delta^k})=\left(\begin{array}{cccc}
       \End_B P(\alpha_i)&(P(\alpha_i),Q(\alpha_i))&(P(\alpha_i),P(\alpha_0))&(P(\alpha_i),Q(\alpha_0))\\ 
       (Q(\alpha_i),P(\alpha_i)) & \End_B Q(\alpha_i)&(Q(\alpha_i),P(\alpha_0))&(Q(\alpha_i),Q(\alpha_0))\\
       (P(\alpha_0),P(\alpha_i))&(P(\alpha_0),Q(\alpha_i))&\End_B P(\alpha_0)&(P(\alpha_0),Q(\alpha_0))\\ 
       (Q(\alpha_0),P(\alpha_i)) &(Q(\alpha_0),Q(\alpha_i))&(Q(\alpha_0),P(\alpha_0))&\End_B Q(\alpha_0)
       \end{array}\right)
   \end{equation}
\normalsize
Note that the composite homomorphism
   $$A\xrightarrow{u(P_i)}\End_B(P_i)\xrightarrow{\tilde\sigma}M_{m_i+m_0}(B^{\Delta^k}),$$
where $u(P_i)$ is the structure homomorphism and $\tilde\sigma$ maps $\End_B(P_i)$ to the (1,1)-entry of the
matrix~\eqref{matritsa}, equals the composite homomorphism
   $$A\xrightarrow{u(P_0)}\End_B(P_0)\xrightarrow{R_i\tilde\sigma'R^{-1}_i}M_{m_i+m_0}(B^{\Delta^k}),$$
where $\tilde\sigma'$ maps $\End_B(P_0)$ to the (3,3)-entry of the matrix~\eqref{matritsa} and
   $$R_i=\left(\begin{array}{cccc}
       0&0&-g_{i-1}^{-1}&0\\ 
       0& 1&0&0\\
       g_{i-1}&0&0&0\\ 
       0&0&0&1
       \end{array}\right),\quad 
       R^{-1}_i=\left(\begin{array}{cccc}
       0&0&g_{i-1}^{-1}&0\\ 
       0& 1&0&0\\
       -g_{i-1}&0&0&0\\ 
       0&0&0&1
       \end{array}\right)$$
Here multiplication of the relevant entries corresponds to the composition
of homomorphisms. By definition,
   $$R_0=\left(\begin{array}{cccc}
       0&0&-1&0\\ 
       0& 1&0&0\\
       1&0&0&0\\ 
       0&0&0&1
       \end{array}\right),\quad 
       R^{-1}_0=\left(\begin{array}{cccc}
       0&0&1&0\\ 
       0& 1&0&0\\
       -1&0&0&0\\ 
       0&0&0&1
       \end{array}\right)$$

We set $R_0(x)=R_0$ and
   $$R_i(x)=\left(\begin{array}{cccc}          1-x^2 & 0 & g_{i-1}^{-1}(x^3-2x) & 0 \\
                                          0 & 1& 0 & 0 \\
                                          g_{i-1}x & 0& 1-x^2 & 0 \\
                                          0 & 0 & 0 & 1 \\
                                        \end{array}\right),\quad
     R_i^{-1}(x)=\left(\begin{array}{cccc}          1-x^2 & 0 &g_{i-1}^{-1}(2x- x^3) & 0 \\
                                          0 & 1& 0 & 0 \\
                                          -g_{i-1}x & 0& 1-x^2 & 0 \\
                                          0 & 0 & 0 & 1 \\
                                        \end{array}\right)$$
if $i>0$. Both matrices belong to $GL_{m_i+m_0}(B^{\Delta^k}[x])$ and are inverse to each other.
By construction, $R_i(0)=I$ and $R_i(1)=R_i$.

For each $0\leq i\leq n$ denote by $\Psi_i(x)=(m_i+m_0,\beta_i(x):A\to M_{m_i+m_0}(B^{\Delta^k}[x]))\in\cc A(A,B^{\Delta^k}[x])$, 
where $\beta_i(x)$ is the composite homomorphism
   $$A\xrightarrow{u(P_0)}\End_B(P_0)\xrightarrow{R_i(x)\tilde\sigma'R^{-1}_i(x)}M_{m_i+m_0}(B^{\Delta^k}[x]).$$
It uniquely determines an isomorphism $h_i(x):\Psi_0(x)\to\Psi_i(x)$ in $\cc A(A,B^{\Delta^k}[x])$.
Namely, there is a canonical isomorphism of $(A,B)$-bimodules
   $$h_i(x):\beta_0(x)(1)(M_{2m_0}(B^{\Delta^k}[x]))\lra\cong{}_AP_{B^{\Delta^k}}\otimes_{B^{\Delta^k}} B^{\Delta^k}[x]\lra\cong \beta_i(x)(1)(M_{m_i+m_0}(B^{\Delta^k}[x])).$$

The assignment
      \begin{multline*}
       (g_0:\Phi_0\lra{\cong}\Phi_1,\ldots,g_{n-1}:\Phi_0\lra{\cong}\Phi_n)\in i_n\cc A(A,B^{\Delta^k})\mapsto\\
           (h_0(x):\Psi_0(x)\lra{\cong}\Psi_1(x),\ldots,h_{n-1}(x):\Psi_0(x)\lra{\cong}\Psi_n(x))\in i_n\cc A(A,B^{\Delta^k}[x])         
      \end{multline*}
is obviously consistent with simplicial structure maps in the $\Delta^\bullet$-direction. Thus we get a simplicial homotopy
   $$U:i_n\cc A(A,B^\Delta)\times\Delta[1]\to i_n\cc A^\sigma(A,B^\Delta)=\colim(i_n\cc A(A,B^\Delta)\lra\sigma i_n\cc A(A,B^\Delta)\lra\sigma\cdots)$$
such that $U_1$ equals the canonical inclusion
$\iota:i_n\cc A(A,B^\Delta)\to i_n\cc A^\sigma(A,B^\Delta)$.
The map $U_0$ takes 
$(g_0:\Phi_0\lra{\cong}\Phi_1,\ldots,g_{n-1}:\Phi_0\lra{\cong}\Phi_n)\in i_n\cc A(A,B^{\Delta^k})$ to
   $$(s_0:\sigma^{m_0}(\Phi_0)\xrightarrow{\cong}X_{m_0,m_1}\sigma^{m_1}(\Phi_0)X_{m_0,m_1}^{-1},\ldots,
       s_{n-1}:\sigma^{m_n}(\Phi_0)\xrightarrow{\cong}X_{m_0,m_n}\sigma^{m_n}(\Phi_0)X_{m_0,m_n}^{-1})\in i_n\cc A(A,B^{\Delta^k}),$$
where 
   $$X_{m_0,m_i}=\left(\begin{array}{cc}    0 & -I_{m_i} \\
                                                                    I_{m_0} & 0
                                        \end{array}\right),\quad
       X_{m_0,m_i}^{-1}=\left(\begin{array}{cc}    0 & I_{m_0} \\
                                                                    -I_{m_i} & 0
                                        \end{array}\right)$$

Each $X_{m_0,m_i}\in SL_{m_i+m_0}(\bb Z)$. The proof of Lemma~\ref{barbashev} shows that there is $L_{i}(x)\in SL_{m_i+m_0}(k[x])$
such that $L_i(0)=I$ and $L_i(1)=X_{m_0,m_i}$. We set $L_0(x)=I_{2m_0}$ and $m:=m_0+\cdots+m_n$. 
For each $0\leq i\leq n$ denote by $\Theta_i(x)=(m,\gamma_i(x):A\to M_{m}(B^{\Delta^k}[x]))\in\cc A(A,B^{\Delta^k}[x])$, 
where $\gamma_i(x)$ is the composite homomorphism
   $$A\xrightarrow{u(P_0)}\End_B(P_0)\hookrightarrow M_{m_0}(B^{\Delta^k})
       \xrightarrow{L_i(x)\bar\sigma L^{-1}_i(x)}M_{m_0+m_i}(B^{\Delta^k}[x])\lra{\bar\sigma }M_{m}(B^{\Delta^k}[x]),$$
where $\bar\sigma$ maps $M_{m_0}(B^{\Delta^k})$ to the upper left corner of $M_{m_0+m_i}(B^{\Delta^k}[x])$.
As above, it uniquely determines an isomorphism $q_i(x):\Theta_0(x)\to\Theta_i(x)$ in $\cc A(A,B^{\Delta^k}[x])$.

The assignment
      \begin{multline*}
       (g_0:\Phi_0\lra{\cong}\Phi_1,\ldots,g_{n-1}:\Phi_0\lra{\cong}\Phi_n)\in i_n\cc A(A,B^{\Delta^k})
       \mapsto\\
           (q_0(x):\Theta_0(x)\lra{\cong}\Theta_1(x),\ldots,q_{n-1}(x):\Theta_0(x)\lra{\cong}\Theta_n(x))\in i_n\cc A(A,B^{\Delta^k}[x])         
      \end{multline*}
is obviously consistent with simplicial structure maps in the $\Delta^\bullet$-direction. Thus we get a simplicial homotopy
   $$V:i_n\cc A(A,B^\Delta)\times\Delta[1]\to i_n\cc A^\sigma(A,B^\Delta)$$
such that $V_0$ equals $\eta_{0,n}p_n\iota$.
The map $V_1$ takes $(g_0:\Phi_0\lra{\cong}\Phi_1,\ldots,g_{n-1}:\Phi_0\lra{\cong}\Phi_n)$ to
   $$(s_0:\sigma^{m_0}(\Phi_0)\xrightarrow{\cong}X_{m_0,m_1}\sigma^{m_1}(\Phi_0)X_{m_0,m_1}^{-1},\ldots,
       s_{n-1}:\sigma^{m_n}(\Phi_0)\xrightarrow{\cong}X_{m_0,m_n}\sigma^{m_n}(\Phi_0)X_{m_0,m_n}^{-1}).$$

The homotopies $U,V$ together yield a homotopy
   $$H:i_n\cc A(A,B^\Delta)\times\sd^1\Delta[1]\to i_n\cc A^\sigma(A,B^\Delta)$$
such that $H_1$ is the canonical inclusion
$\iota:i_n\cc A(A,B^\Delta)\to i_n\cc A^\sigma(A,B^\Delta)$ 
and $H_0$ equals $\eta_{0,n}p_n\iota$. By construction, the homotopy $H$ preserves
the trivial string of identities of the zeroth $(A,B)$-bimodule 0. Moreover, $H$ is functorial in structure maps of the 
cosimplicial $k$-algebra $A\odot S^1$ (also see the proof of Proposition~\ref{segalcat}, where the description of the category
$\cc A(A\times A,B)$ is given). Thus
there is a homotopy, denoted by the same letter, between connected simplicial sets 
   $$H:i_n\cc A(A\odot S^1,B^\Delta)\times\sd^1\Delta[1]\to i_n\cc A^\sigma(A\odot S^1,B^\Delta)$$
such that $\pi_\ell(H_0)=\pi_\ell(\eta_{0,n})\pi_\ell(p_n)\pi_\ell(\iota)$ and $\pi_\ell(H_1)=\pi_\ell(\iota)$ for all $\ell\geq 0$.
We see that $\pi_\ell(\iota)=\pi_\ell(\eta_{0,n})\pi_\ell(p_n)\pi_\ell(\iota)$. On the other hand,
$\pi_\ell(p_n)\pi_\ell(\eta_{0,n})=\id$. 
This is enough to conclude that $\eta_1$ is a weak equivalence.
This completes the proof of the theorem.
\end{proof}

\section{Symmetric matrix motives}\label{sectionsymm}

\begin{definition}\label{mtrmotsym}
The {\it symmetric matrix motive $M^\Sigma_{\mathrm{mtr}}(A)$ of a $k$-algebra $A\in\aha$} is the spectral functor taking
$B\in\aha$ to the symmetric $S^1$-spectrum 
   $$M^\Sigma_{\mathrm{mtr}}(A)(B):=(\Mtr(A,B^\Delta),\Mtr(A\odot S^1,M_2(B^\Delta)),\Mtr(A\odot S^2,M_{2^2}(B^\Delta)),\ldots).$$
The left action of $\Sigma_n$ on $M^\Sigma_{\mathrm{mtr}}(A)(B)$ is defined similarly to~\eqref{sigmaseqq}.
The symmetric spectrum structure is defined similarly to $\Mtrc_*(A,B)$.
Note that the definition of the matrix motive of an algebra is similar to that for symmetric framed motives in the sense of~\cite{GTLMS}.
\end{definition}

\begin{remark}\label{mtrvar}
The symmetric matrix motive in the sense of the
preceding definition is a bit different from the matrix motive of
$A$ in the sense of Definition~\ref{mtrmot} defined as
   $${M}_{\mathrm{mtr}}(A):=(C_*\Mtr(A,-),C_*\Mtr(A\odot S^1,-),C_*\Mtr(A\odot S^2,-),\ldots).$$
There is a canonical morphism of functors of non-symmetric $S^1$-spectra
   $$\mu:{M}_{\mathrm{mtr}}(A)\to {M}_{\mathrm{mtr}}^\Sigma(A)$$
defined as follows. For any $B\in\aha$ and $n\geq 0$ the map of simplicial sets
   $$\mu_n:\Mtr(A\odot S^n,B^\Delta)\to \Mtr(A\odot S^n,M_{2^n}(B^\Delta))$$
is induced by the non-unital homomorphism $\sigma^n:B\to M_{2^n}(B)$.
\end{remark}

\begin{lemma}\label{mtrsymmvar}
For any $A\in\aha$ the map $\mu:M_{\mathrm{mtr}}(A)\to{M}^\Sigma_{\mathrm{mtr}}(A)$ is a sectionwise level weak
equivalence of simplicial sets in positive degrees.
\end{lemma}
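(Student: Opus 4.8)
The plan is to deduce the lemma from the Comparison Theorem~\ref{compar} together with Morita invariance, rather than by a direct homotopy-of-maps argument. The key observation is that the level-$n$ component of $\mu$ coincides with the level-$n$ component of the map of symmetric $S^1$-spectra $M_{\mathrm{mtr}}(A)(\sigma^n)\colon M_{\mathrm{mtr}}(A)(B)\to M_{\mathrm{mtr}}(A)(M_{2^n}(B))$ induced by the non-unital homomorphism $\sigma^n\colon B\to M_{2^n}(B)$: in level $n$ both are the map $\Mtr(A\odot S^n,B^\Delta)\to\Mtr(A\odot S^n,M_{2^n}(B^\Delta))$ induced by $\sigma^n$ (Remark~\ref{mtrvar}). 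Since $M_{2^0}=M_1$, in degree $0$ the spectra $M_{\mathrm{mtr}}(A)$ and $M^\Sigma_{\mathrm{mtr}}(A)$ literally agree, so it suffices to prove that $M_{\mathrm{mtr}}(A)(\sigma^n)$ is a stable equivalence of spectra for all $A,B\in\aha$ and $n\geq 0$: by Corollary~\ref{gammacor} both $M_{\mathrm{mtr}}(A)(B)$ and $M_{\mathrm{mtr}}(A)(M_{2^n}(B))$ are $\Omega$-spectra in positive degrees, and for such spectra $\pi_j$ of the $m$-th space is the stable group $\pi^s_{j-m}$ whenever $m\geq 1$, so a stable equivalence between them is automatically a weak equivalence in every positive level.

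To show that $M_{\mathrm{mtr}}(A)(\sigma^n)$ is a stable equivalence I would use the functoriality in $B$ of the zigzag of stable equivalences $M_{\mathrm{mtr}}(A)(B)\xrightarrow{\kappa\eta}\bb K^\sigma(A,B^\Delta)\xleftarrow{\sim}\bb K(A,B^\Delta)$ supplied by the Comparison Theorem~\ref{compar}. Applying this naturality to $\sigma^n$ reduces the claim to the statement that $\bb K(A,\sigma^n)\colon\bb K(A,B^\Delta)\to\bb K(A,M_{2^n}(B^\Delta))$ is a stable equivalence, and by Theorem~\ref{semga} (together with the fact that an equivalence of additive categories induces a stable equivalence of the associated Waldhausen $K$-theory spectra, applied degreewise in the $\Delta$-direction) this follows once the additive functor $\cc A(A,\sigma^n)\colon\cc A(A,B^{\Delta^m})\to\cc A(A,M_{2^n}(B^{\Delta^m}))$ is an equivalence of additive categories for every $m\geq 0$.

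The last point is where the real work lies. Under the equivalences $\cc A(A,-)\simeq\cc P(A,-)$ of Theorem~\ref{semga}, the functor $\cc A(A,\sigma^n)$ becomes the functor $\cc P(A,\sigma^n)$ carrying $P(\alpha)$, for $\Phi=(\ell,\alpha\colon A\to M_\ell(B))$, to $P(M_\ell(\sigma^n)\alpha)$; I would identify it with the classical Morita equivalence attached to the corner inclusion $B\cong e_{11}M_{2^n}(B)e_{11}\hookrightarrow M_{2^n}(B)$ by checking that $P(M_\ell(\sigma^n)\alpha)\cdot e_{11}$ is canonically isomorphic to $P(\alpha)$ as an $(A,B)$-bimodule, naturally in $\Phi$ and in morphisms. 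This exhibits the composite of $\cc P(A,\sigma^n)$ with the Morita functor $M\mapsto Me_{11}$ as naturally isomorphic to the identity of $\cc P(A,B)$; since the Morita functor is an equivalence, so are $\cc P(A,\sigma^n)$ and $\cc A(A,\sigma^n)$, completing the argument. Verifying this identification — that the functor on $\cc P$ induced by $\sigma^n$ is genuinely the Morita equivalence, with the $A$-module structure intact — is the main obstacle; everything else is formal given the results already established. (Alternatively one can argue directly: writing $\Mtr(A\odot S^n,-)=\Hom_{\ahaw}(A\odot S^n,M_\infty(-))$ via Lemma~\ref{ka}, one presents $\mu_n$ as post-composition with a ``spread'' self-embedding of $M_\infty(B^\Delta)$ that is conjugate, stagewise, by even permutation matrices to the canonical corner embedding; Lemma~\ref{barbashev} then yields polynomial, hence by \cite[Proposition~3.2]{Gar} simplicial, homotopies to the identity, which must be assembled over the stabilisation colimit by the $\mathrm{Ex}^\infty$-and-Lemma~\ref{obv} technique used in the proof of the Additivity Theorem~\ref{add} — and that assembly is the main obstacle in this second route.)
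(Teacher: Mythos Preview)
Your argument is correct, but it takes a much longer route than the paper. The paper's proof is a single line: ``This follows from Lemma~\ref{magnitka}.'' The point you are missing is that there is a \emph{canonical identification} $\Mtr(A\odot S^n,M_{2^n}(B^\Delta))\cong\Mtr(A\odot S^n,B^\Delta)$: since $\Mtr_k(A',M_{2^n}(B))=\Hom(A',M_{2^k}(M_{2^n}(B)))=\Hom(A',M_{2^{k+n}}(B))=\Mtr_{k+n}(A',B)$ and the $\sigma_{A'}$-transition maps match, the two colimits agree after a reindexing. Under this identification $\mu_n$ becomes precisely the $n$-fold iterate of the self-map $\sigma$ of $\Mtr(A\odot S^n,B^\Delta)$ induced by post-composition with $\sigma_B$. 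Lemma~\ref{magnitka} (or Corollary~\ref{traktor}) says this self-map is simplicially homotopic to the identity; since for $n\geq 1$ the space $\Mtr(A\odot S^n,B^\Delta)$ is connected (it has a single $0$-simplex, as $(A\odot S^n)_0=0$), the result is immediate.

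Your main route via the Comparison Theorem and Morita invariance is valid, but it invokes the much harder Theorem~\ref{compar} to establish what is really an elementary consequence of material already proved in Section~\ref{sectionmtr}. Your parenthetical ``alternative'' is much closer in spirit to the paper's argument, but you propose to redo the $\mathrm{Ex}^\infty$/Lemma~\ref{obv} assembly from scratch; that work is exactly what Lemma~\ref{magnitka} and Corollary~\ref{traktor} have already packaged, so there is nothing left to assemble --- one simply cites them. The moral: before reaching for heavy machinery, check whether the target of your map is canonically the same object as the source.
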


\begin{proof}
This follows from Lemma~\ref{magnitka}.
\end{proof}

Though ${M}_{\mathrm{mtr}}(A)$ is canonically a symmetric
$S^1$-spectrum in $Sp^\Sigma_{S^1}$, where $\Sigma_n$ acts on each space by permuting
$S^n$, the point is that it is not an $\Mtrc_*(k)$-module in
contrast with ${M}^\Sigma_{\mathrm{mtr}}(A)$. We should notice that $\Ob(\Mtrc_*(k))$ is a large set. Passing to the next universe, we may 
deal with the category of left modules $\Mtrc_*(k)\Mod$ consisting of the spectral covariant functors from
$\Mtrc_*(k)$ to $Sp^\Sigma_{S^1}$.

\begin{proposition}\label{mtrmodulevar}
The following statements are true:
\begin{enumerate}
\item ${M}^\Sigma_{\mathrm{mtr}}(A)$ is an $\Mtrc_*(k)$-module;
\item the canonical map
$\alpha:C_*\Mtrc_*(A,-)\to M_{\mathrm{mtr}}^\Sigma(A)$ in $\Mtrc_*(k)\Mod$ 
is a sectionwise $\pi_*$-isomorphism (i.e. a stable equivalence of
ordinary spectra).
\end{enumerate}
\end{proposition}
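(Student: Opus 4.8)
The plan is to equip $M^\Sigma_{\mathrm{mtr}}(A)$ with the $\Mtrc_*(k)$-action given by the same ``insert matrices and compose'' recipe that defines the pairing~\eqref{bim} of $\Mtrc_*(k)$, and then to prove (2) by exhibiting $C_*\Mtrc_*(A,-)(B)$ as the diagonal of a bispectrum whose rows are the Segal spectra $\Mtr^{Seg}_\ell(A,B^\Delta)$ of Section~\ref{sectioncompar} and whose column colimit is $M_{\mathrm{mtr}}(A)(B)$, reducing everything to Lemma~\ref{pin} and Lemma~\ref{mtrsymmvar} through a cofinality argument. For (1), for $B,C\in\aha$ the action is determined, in the sense of~\cite[Section~I.5.1]{Sch}, by the bimorphism consisting of the $(\Sigma_q\times\Sigma_m)$-equivariant maps $\Mtrc_q(B,C)\wedge M^\Sigma_{\mathrm{mtr}}(A)(B)_m\to M^\Sigma_{\mathrm{mtr}}(A)(C)_{q+m}$ that send a pair $(g\colon B\odot S^q\to M_{2^q}(C),\ f\colon A\odot S^m\to M_{2^\ell}(M_{2^m}(B^\Delta)))$ to
\begin{gather*}
A\odot S^{q+m}=(A\odot S^m)\odot S^q\xrightarrow{f\odot S^q}M_{2^\ell}M_{2^m}(B^\Delta)\odot S^q\xrightarrow{\mathrm{can}}M_{2^\ell}M_{2^m}(B^\Delta\odot S^q)\\
\xrightarrow{M_{2^\ell}M_{2^m}(g^\Delta)}M_{2^\ell}M_{2^m}M_{2^q}(C^\Delta)\xrightarrow{\mathrm{can}}M_{2^\ell}(M_{2^{q+m}}(C^\Delta)),
\end{gather*}
where $g^\Delta$ is the induced cosimplicial homomorphism and the canonical isomorphisms are those of the proofs of Theorems~\ref{mtrsigma} and~\ref{avangard}. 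That this descends to the colimit defining $\Mtr$, is equivariant for the block-from-the-right convention~\eqref{sigmaseqq}, satisfies the bilinearity diagram~\eqref{yugra}, and is strictly associative and unital for the composition of matrix transfers is verified verbatim as in the proof of Theorem~\ref{avangard}; equivalently, $M^\Sigma_{\mathrm{mtr}}(A)$ is, up to the duality of~\cite[Proposition~6.2.2]{Bor}, the module over $\corr^{E,\sigma}_*(\Aff_k)$ attached to $A$ with $P=M_2(k)$ and $E=(k\odot S^0,k\odot S^1,\ldots)$, which is a module by~\cite[\S5]{GTLMS}. The map $\alpha$ is the obvious one: in spectral degree $m$ it regards a level-$m$ correspondence $A\odot S^m\to M_{2^m}(B^\Delta)$ as a level-zero correspondence into $M_{2^m}(B^\Delta)$ and includes it into $\Mtr(A\odot S^m,M_{2^m}(B^\Delta))$ at stage $0$; being assembled from level-zero inclusions it commutes with the pairings above, hence is a morphism in $\Mtrc_*(k)\Mod$.

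For (2), fix $B\in\aha$ and set $E_{p,\ell}(B):=\Mtr_\ell(A\odot S^p,B^\Delta)$. For fixed $\ell$, the sequence $p\mapsto E_{p,\ell}(B)$ with its Segal assembly maps $E_{p,\ell}(B)\wedge S^1\to E_{p+1,\ell}(B)$ is exactly the symmetric $S^1$-spectrum $\Mtr^{Seg}_\ell(A,B^\Delta)$ of Section~\ref{sectioncompar}; in the $\ell$-direction the maps $E_{p,\ell}(B)\to E_{p,\ell+1}(B)$ induced by $\sigma$ commute with the Segal assembly, because $\sigma$ is natural in level-zero homomorphisms (as in the commuting diagram of the proof of Lemma~\ref{magnitka}), and $\colim_\ell E_{p,\ell}(B)=\Mtr(A\odot S^p,B^\Delta)=M_{\mathrm{mtr}}(A)(B)_p$ by construction. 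Inspecting the structure maps in Section~\ref{sectionmtrc}, the spectrum $C_*\Mtrc_*(A,-)(B)$ has $m$-th space $E_{m,m}(B)$ and $m$-th structure map the Segal assembly $E_{m,m}(B)\wedge S^1\to E_{m+1,m}(B)$ followed by $M_{2^m}(\sigma_B)$-postcomposition $E_{m+1,m}(B)\to E_{m+1,m+1}(B)$, which by Lemma~\ref{pin} agrees on homotopy groups with the $\sigma$-map used just above. Hence $\pi_k\bigl(C_*\Mtrc_*(A,-)(B)\bigr)=\colim_m\pi_{k+m}E_{m,m}(B)$, and since the diagonal $\{(m,m)\}$ is cofinal among pairs $(p,\ell)$ of non-negative integers and the Segal-direction and $\sigma$-direction maps commute, this colimit equals $\colim_{(p,\ell)}\pi_{k+p}E_{p,\ell}(B)=\colim_p\pi_{k+p}\bigl(\colim_\ell E_{p,\ell}(B)\bigr)=\colim_p\pi_{k+p}\bigl(M_{\mathrm{mtr}}(A)(B)_p\bigr)=\pi_k M_{\mathrm{mtr}}(A)(B)$, using that $\pi_*$ commutes with the filtered colimits in play. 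Corollary~\ref{gammacor} moreover guarantees this group is already attained at a finite stage, and Lemma~\ref{mtrsymmvar} gives $\pi_*M_{\mathrm{mtr}}(A)(B)\xrightarrow{\sim}\pi_*M^\Sigma_{\mathrm{mtr}}(A)(B)$ via $\mu$. A diagram chase — in which one uses that $\alpha$, the level-zero inclusion $C_*\Mtrc_*(A,-)(B)\to M_{\mathrm{mtr}}(A)(B)$, and $\mu$ differ only by the polynomial homotopies of Lemmas~\ref{pin} and~\ref{barbashev} — then shows that $\pi_*(\alpha)$ is precisely this composite isomorphism, so $\alpha$ is a sectionwise $\pi_*$-isomorphism.

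The step I expect to be the main obstacle is the compatibility bookkeeping in (2): one must check at the level of simplicial sets that in the bispectrum $E_{\bullet,\bullet}(B)$ the Segal assembly maps and the $\sigma$-maps genuinely commute, and that the structure maps of $C_*\Mtrc_*(A,-)(B)$ — which postcompose with $M_{2^m}(\sigma_B)$ rather than precompose with $\sigma_{A\odot S^{m+1}}$ — induce on homotopy groups the diagonal transition maps of this bispectrum, so that the cofinality identification of $\pi_*$ is genuinely the map induced by $\alpha$ and not merely an abstract isomorphism. Everything else is a routine consequence of Lemmas~\ref{pin} and~\ref{mtrsymmvar}, Corollary~\ref{gammacor}, and the construction of $\Mtrc_*(k)$ in Theorems~\ref{mtrsigma} and~\ref{avangard}.
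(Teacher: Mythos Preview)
Your approach is correct and, at its core, coincides with the paper's: both arguments amount to a double-indexed stabilisation in which one direction is the Segal/$S^1$-direction and the other is the matrix-size direction, with Lemma~\ref{pin} providing the key homotopy between the $\sigma_A$-type and $\sigma_B$-type transition maps. The organisation, however, is different. The paper follows~\cite[Proposition~4.12(3)]{GTLMS} and~\cite[Lemma~4.12]{GN}: it introduces auxiliary spectra $\cc A$, $\cc B$ (colimits of $\cc A_{n,r}=\Hom(A\odot S^r,M_{2^n}(B^\Delta))$ and $\cc B_{m,n,r}=\Hom(A\odot S^r,M_{2^{m+n}}(B^\Delta))$ along $\sigma_B$, respectively along both $\sigma_A$ and $\sigma_B$), fits them with $\Mtrc_*(A,B^\Delta)$ and $M^\Sigma_{\mathrm{mtr}}(A)(B)$ into a commutative square whose corners have equal $\Theta^\infty_{S^1}$, and then proves the remaining map $c:\cc A\to\cc B$ is a stable equivalence by showing each $\cc B_{m,r}\to\cc B_{1+m,r}$ is a weak equivalence for $r>0$ via a triangle that commutes up to simplicial homotopy (Lemma~\ref{pin}) and in which the shuffle permutation $\chi_{m+n,1}$ appears explicitly. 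The payoff of this packaging is that the map being shown to be an equivalence is $\alpha$ \emph{by construction} of the commutative square---no final diagram chase is needed.

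Your bispectrum/cofinality formulation via $E_{p,\ell}=\Mtr_\ell(A\odot S^p,B^\Delta)$ reaches the same conclusion by passing through $M_{\mathrm{mtr}}(A)(B)$ and then invoking Lemma~\ref{mtrsymmvar}. The one genuine subtlety you flag is real: the diagonal of $C_*\Mtrc_*(A,-)(B)$ is \emph{not} literally a cofinal subsystem of the bispectrum, since its transition maps use $M_{2^m}(\sigma_B)$ rather than $\sigma_{A\odot S^{m+1}}$; you must first pass to $\pi_*$ (where Lemma~\ref{magnitka} makes the two transition maps equal, using connectedness for $r>0$) before cofinality applies, and then check separately that the resulting isomorphism is $\pi_*(\alpha)$ rather than some abstract identification. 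The paper's $\Theta^\infty$/commutative-square scaffolding is precisely what absorbs this bookkeeping, so if you carry out your final diagram chase you will in effect reconstruct the square of $a,b,c,\alpha$ that the paper writes down directly.
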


\begin{proof}
(1). ${M}^\Sigma_{\mathrm{mtr}}(A)$ is an $\Mtrc_*(k)$-module for the same reasons as 
the representable module $\Mtrc_*(A,-)$ is.

(2). The proof is like that of~\cite[Proposition~4.12(3)]{GTLMS}. Let $\Theta^\infty_{S^1}$ be the naive stabilisation functor of
$S^1$-spectra. It has the property that $X\to\Theta^\infty_{S^1}(X)$
is a stable equivalence for every $S^1$-spectrum
$X$~\cite[Proposition~4.7]{H}.

Let $\cc A_{n,r}:=\Hom_{\aha}(A\odot S^r,M_{2^n}(B^\Delta))$,
$\cc B_{m,n,r}:=\Hom_{\aha}(A\odot S^r,M_{2^{m+n}}(B^\Delta))$, $m,n,r\geq
0$. We have maps of spaces
   $$\cc A_{n,r}\xrightarrow{\sigma_B}\cc A_{n+1,r}\quad\textrm{and}\quad
     \cc B_{m,n,r}\xrightarrow{\sigma_A}\cc B_{1+m,n,r}\xrightarrow{\sigma_B}\cc B_{1+m,n+1,r}.$$
Let $\cc A_r:=\colim_{n\geq r}\cc A_{n,r}$, $\cc B_{m,r}:=\colim_{n}\cc B_{m,n,r}$,
$\cc B_r:=\colim_{m,n}\cc B_{m,n,r}$. The spaces $\cc A_r$, $\cc B_r$ constitute
right $S^1$-spectra $\cc A$ and $\cc B$. 
Each structure map
$\cc A_r\to\uhom(S^1,\cc A_{r+1})$ is the composite map determined by the following commutative diagram
   $$\xymatrix{\cc A_r:&(A\odot S^r,M_{2^r}(B^\Delta))\ar[d]_{M_{2^{r}}(\sigma_B)}\ar[r]^{M_{2^{r}}(\sigma_B)}
                             &(A\odot S^r,M_{2^{r+1}}(B^\Delta))\ar[d]_{M_{2^{r+1}}(\sigma_B)}\ar[r]^(.7){M_{2^{r+1}}(\sigma_B)}&\cdots\\
                             &(A\odot S^r,M_{2^{r+1}}(B^\Delta))\ar[d]_{-\wedge S^1}\ar[r]^{M_{2^{r+1}}(\sigma_B)}
                             &(A\odot S^r,M_{2^{r+2}}(B^\Delta))\ar[d]_{-\wedge S^1}\ar[r]^(.7){M_{2^{r+2}}(\sigma_B)}&\cdots\\
   \uhom(S^1,\cc A_{r+1}):&(S^1,(A\odot S^{r+1},M_{2^{r+1}}(B^\Delta)))\ar[r]^{(S^1,\sigma_B)}
   &(S^1,(A\odot S^{r+1},M_{2^{r+2}}(B^\Delta)))\ar[r]^(.8){(S^1,\sigma_B)}&\cdots}$$
Each structure map $\cc B_r\to\uhom(S^1,\cc B_{r+1})$ is defined 
in a similar fashion.

There is a commutative diagram of
spectra
   $$\xymatrix{\Mtrc_*(A,B^\Delta)\ar@/^1pc/[rr]^{j_{\Mtrc_*}}\ar[d]_\alpha\ar[r]_(.7)a
                &\cc A\ar[d]_c\ar[r]_{j_{\cc A}}&\Theta^\infty_{S^1}(\cc A)\ar[d]^{\Theta^\infty_{S^1}(c)}\\
                M_{\mathrm{mtr}}^\Sigma(A)(B)\ar@/_1pc/[rr]_{j_{M_{\mathrm{mtr}}^\Sigma}}\ar[r]^(.7)b&\cc B\ar[r]^{j_{\cc B}}&\Theta^\infty_{S^1}(\cc B)}$$
in which
$\Theta^\infty_{S^1}(\cc A)=\Theta^\infty_{S^1}(\Mtrc_*(A,B^\Delta))$,
$\Theta^\infty_{S^1}(\cc B)=\Theta^\infty_{S^1}(M_{\mathrm{mtr}}^\Sigma(A)(B))$.
The maps $a,b,c$ are defined in a canonical way. By the
two-out-of-three property $a,b$ are stable equivalences. Therefore
$\alpha$ is a stable equivalence if and only if $c$ is.

But $c$ is the infinite composition
   $$\cc A_r\cong\cc B_{0,r}\to\cc B_{1,r}\to\cc B_{2,r}\to\cdots\to\cc B_r.$$
Our proof will follow if we show that each map $\cc B_{m,r}\to\cc B_{1+m,r}$ is a weak
equivalence for any $r>0$. The proof of this claim is similar to that of~\cite[Lemma~4.12]{GN}.
The map in question is obtained as the colimit of the maps
   \begin{equation}\label{rrr}
    \sigma_A:(A\odot S^r,M_{2^{m+n}}(B^\Delta))\to(A\odot S^r,M_{2^{1+m+n}}(B^\Delta)).
   \end{equation}
Consider the triangle
\[
\xymatrix{
(A\odot S^r,M_{2^{1+m+n}}(B^\Delta))& (A\odot S^r,M_{2^{1+m}}(M_{2^{n-1}}(B^\Delta))) \ar[l]_(.52){(\sigma_B)_*}\\
(A\odot S^r,M_{2^{m+n}}(B^\Delta))\ar[u]^{\sigma_A}\ar[ru]_{\cong}}
\]
where the vertical map is the map~\eqref{rrr}, the skew map is the
isomorphism given by identification $M_{2^{m+n}}(B^\Delta)=M_{2^{1+m}}(M_{2^{n-1}}(B^\Delta))$, 
and the horizontal map is induced by the stabilization map in the $n$th direction.
The composite map of the triangle differs from the left vertical map by the shuffle
permutation action $\chi_{m+n,1}$ on $M_{2^{1+m+n}}(B^\Delta)$ . 
The proof of Lemma~\ref{pin} shows that the triangle is commutative
up to a simplicial homotopy. Note that the horizontal map induces an
isomorphism on the colimit over $n$. Thus the vertical map induces a
bijection on the colimits of homotopy groups $\pi_*$ as all spaces in question are connected due to
our assumption that $r>0$. We see
that the vertical map induces a weak equivalence.
\end{proof}

The Comparison Theorem~\ref{compar}, Lemma~\ref{mtrsymmvar} and Proposition~\ref{mtrmodulevar}
imply the following result.

\begin{theorem}\label{wunderbar}
There is a zigzag of stable equivalences of non-symmetric spectra, functorial in $A,B\in\aha$,
   $$\Mtrc_*(A,B^\Delta)\xrightarrow{\sim}M_{\mathrm{mtr}}^\Sigma(A)(B)\xleftarrow{\sim}M_{\mathrm{mtr}}(A)(B)
       \xrightarrow{\sim}\bb K^\sigma(A,B^\Delta)\xleftarrow{\sim}\bb K(A,B^\Delta).$$
\end{theorem}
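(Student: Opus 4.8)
The plan is to obtain Theorem~\ref{wunderbar} by splicing together the three results quoted immediately before it, so that essentially no new work is needed beyond identifying each arrow and verifying its direction and homotopical type. Evaluated at $B$, the first arrow $\Mtrc_*(A,B^\Delta)\to M_{\mathrm{mtr}}^\Sigma(A)(B)$ is the canonical map $\alpha$ of Proposition~\ref{mtrmodulevar}(2), noting that $C_*\Mtrc_*(A,-)$ at $B$ is by definition $\Mtrc_*(A,B^\Delta)$; that proposition asserts $\alpha$ is a sectionwise $\pi_*$-isomorphism. The backward arrow $M_{\mathrm{mtr}}(A)(B)\to M_{\mathrm{mtr}}^\Sigma(A)(B)$ is the morphism $\mu$ of Remark~\ref{mtrvar}. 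The remaining segment $M_{\mathrm{mtr}}(A)(B)\to\bb K^\sigma(A,B^\Delta)\xleftarrow{\sim}\bb K(A,B^\Delta)$ is verbatim the zigzag $\kappa_{A,B}\circ\eta$, $\sigma_A$ of the Comparison Theorem~\ref{compar}, which is already established there to be a zigzag of stable equivalences.

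The one point deserving a sentence of justification is that $\mu$ is a stable equivalence of underlying non-symmetric spectra. By Lemma~\ref{mtrsymmvar} the map $\mu$ is a sectionwise level weak equivalence of simplicial sets in positive degrees; since the stable homotopy groups of an $S^1$-spectrum are computed as colimits over its structure maps in arbitrarily high levels, a level weak equivalence in positive degrees is automatically a $\pi_*$-isomorphism. Thus all four arrows in the displayed zigzag are $\pi_*$-isomorphisms of underlying non-symmetric $S^1$-spectra, and one checks directly that they point as displayed: $\alpha$ and $\kappa_{A,B}\circ\eta$ run left-to-right into $M_{\mathrm{mtr}}^\Sigma(A)(B)$ and $\bb K^\sigma(A,B^\Delta)$ respectively, while $\mu$ and $\sigma_A\colon\bb K(A,-)\to\bb K^\sigma(A,-)$ furnish the two backward arrows.

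Finally, each of the cited results is functorial in $A$ and $B$ (for $\alpha$ because it is a morphism in $\Mtrc_*(k)\Mod$ built from the stabilisation maps; for $\mu$ from the definition of $\mu_n$ via $\sigma^n\colon B\to M_{2^n}(B)$; for $\kappa_{A,B}\circ\eta$ and $\sigma_A$ as asserted in Theorem~\ref{compar}), so the composite zigzag is functorial as well. I do not anticipate any genuine obstacle here: the argument is purely a matter of bookkeeping, the only mildly nontrivial input being the passage from "level weak equivalence in positive degrees" to "$\pi_*$-isomorphism" for $\mu$.
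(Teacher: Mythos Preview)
Your proposal is correct and follows exactly the paper's approach: the paper's proof is a one-line citation of the Comparison Theorem~\ref{compar}, Lemma~\ref{mtrsymmvar}, and Proposition~\ref{mtrmodulevar}, and you have correctly identified each arrow in the zigzag with the corresponding result and supplied the routine justification that a positive-degree level weak equivalence is a $\pi_*$-isomorphism.
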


\section{The triangulated category of symmetric matrix motives}\label{sectiontriang}

As we work with (unital) $k$-algebras only in the previous sections, we will restrict
ourselves to smooth affine algebraic varieties of finite type $\smaff_k$ over a field $k$ whenever we deal with
applications to motivic homotopy theory. This does not affect the 
general machinery for motivic spaces (or spectra) as we can safely 
take the following model for the category of pointed motivic spaces $\cc M$:
it consists of simplicial pointed presheaves on smooth affine algebraic varieties $\smaff_k$
endowed with the (projective or injective) local/motivic model structure (see, e.g.,~\cite{AHW1}). Note
that $\smaff_k$ is a skelletaly small category.

The triangulated category of $K$-motives $DK_{-}^{eff}(k)$ in the sense of~\cite{GP1} is constructed
out of the category of right $\bb K$-modules $\Mod\bb K$ associated to the spectral category
$\bb K$ of Theorem~\ref{semga}. By definition, 
   $$\bb K(X,Y):=\bb K(k[Y],k[X]),\quad X,Y\in\smaff_k.$$
A disadvantage of $\Mod\bb K$ is that it is probably not closed symmetric monoidal as the spectral
category $\bb K$ is not symmetric monoidal.

In this section an application of symmetric matrix motives is given. Namely, we suggest
yet another (genuinely local, i.e. avoiding $\bb A^1$-localization) approach to the theory of $K$-motives in the sense of~\cite{GP,GP1}.
The associated category of spectral modules will be closed symmetric monoidal. Its closed symmetric
monoidal homotopy category is compactly generated triangulated with compact generators given by 
symmetric matrix motives of algebraic varieties. By Theorem~\ref{wunderbar} they have the same stable motivic homotopy 
types as $K$-motives in the sense of~\cite{GP,GP1}.
For this, we have to recall some constructions and definitions of motivic homotopy theory
for spectral modules.

Let $\cc O$ be a spectral category and let $\Mod\cc O=[\cc O^{\op},Sp^\Sigma_{S^1}]$ be the
category of right $\cc O$-modules. Recall that the projective stable model
structure on $\Mod\cc O$ is defined as follows (see~\cite{SS1}). The
weak equivalences are the objectwise stable weak equivalences and
fibrations are the objectwise stable projective fibrations. The
stable projective cofibrations are defined by the left lifting
property with respect to all stable projective acyclic fibrations.

Let $\cc Q$ denote the set of elementary distinguished squares in
$\smaff_k$ (see~\cite[3.1.3]{MV})
   \begin{equation}\label{squareQ}
    \xymatrix{\ar@{}[dr] |{\textrm{$Q$}}U'\ar[r]\ar[d]&X'\ar[d]^\phi\\
              U\ar[r]_\psi&X}
   \end{equation}
and let $\cc O$ be a spectral category over $\smaff_k$. By $\cc Q_{\cc O}$ denote the
set of squares
   \begin{equation}\label{squareOQ}
    \xymatrix{\ar@{}[dr] |{\textrm{$\cc O Q$}}\cc O(-,U')\ar[r]\ar[d]&\cc O(-,X')\ar[d]^\phi\\
              \cc O(-,U)\ar[r]_\psi&\cc O(-,X)}
   \end{equation}
which are obtained from the squares in $\cc Q$ by taking $X\in
\text{AffSm}_k$ to $\cc O(-,X)$. The arrow $\cc O(-,U')\to\cc O(-,X')$
can be factored as a cofibration $\cc O(-,U')\rightarrowtail Cyl$
followed by a simplicial homotopy equivalence $Cyl\to\cc O(-,X')$.
There is a canonical morphism 
   $$A_{\cc O Q}:=\cc O(-,U)\bigsqcup_{\cc O(-,U')} Cyl\to\cc O(-,X).$$
   
\begin{definition}[see~\cite{GP,GP1}]\label{1214}
We say that $\cc O$ is {\it Nisnevich excisive\/} if for every
elementary distinguished square $Q$
the square $\cc O Q$~\eqref{squareOQ} is homotopy pushout in the
Nisnevich local model structure on $Sp_{S^1}^\Sigma(k):=Sp^\Sigma(\cc M,{S^1})$.

The {\it Nisnevich local model structure\/} on $\Mod\cc O$ is the
Bousfield localization of the stable projective model structure with
respect to the family of projective cofibrations
   \begin{equation*}\label{no}
    \cc N_{\cc O}=\{\cyl(A_{\cc O Q}\to\cc O(-,X))\}_{\cc Q_{\cc O}}.
   \end{equation*}
The homotopy category for the Nisnevich local model structure will
be denoted by $\shnis\cc O$. 
\end{definition}

Suppose $\cc O$ is symmetric monoidal. By a theorem of
Day~\cite{Day} $\Mod\cc O$ is a closed symmetric monoidal category
with smash product $\wedge$ and $\cc O(-,\pt)$ being the monoidal
unit. The smash product is defined as
   \begin{equation}\label{smash}
    M\wedge_{\cc O} N=\int^{\Ob\cc O\otimes\cc O}M(X)\wedge N(Y)\wedge\cc O(-,X\times Y).
   \end{equation}
The internal Hom functor, right adjoint to $-\wedge_{\cc O}M$, is given by
   $$\underline{\Mod}\cc O(M,N)(X):=Sp^\Sigma(M,N(X\times-))=\int_{Y\in\Ob\cc O}\underline{Sp}^\Sigma(M(Y),N(X\times Y)).$$
There is a natural isomorphism
   $$\cc O(-,X)\wedge_{\cc O}\cc O(-,Y)\cong\cc O(-,X\times Y).$$

\begin{theorem}[\cite{GP}]\label{modelmot}
Suppose $\cc O$ is a Nisnevich excisive
spectral category. Then the Nisnevich local model
structure on $\Mod\cc O$ is cellular, proper, spectral and weakly
finitely generated. Moreover, a map of $\cc O$-modules is a weak
equivalence in the Nisnevich local model
structure if and only if it is a weak equivalence in the Nisnevich local
model structure on $Sp_{S^1}^\Sigma(k)$. If
$\cc O$ is a symmetric monoidal spectral category then the
model structure on $\Mod\cc O$ is symmetric monoidal with respect
to the smash product~\eqref{smash} of $\cc O$-modules.
\end{theorem}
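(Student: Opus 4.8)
The plan is to present the Nisnevich local model structure on $\Mod\cc O$ as a left Bousfield localization of a diagram-type projective model structure, and then to deduce each assertion from the corresponding known property of $Sp^\Sigma_{S^1}(k)$ with its Nisnevich local stable model structure. First I would invoke~\cite{SS1} to get the projective stable model structure on $\Mod\cc O$, transferred along the evaluation functors $M\mapsto M(X)$, $X\in\Ob\cc O$; its weak equivalences and fibrations are the objectwise ones, equivalently those detected on the underlying presheaf of $S^1$-spectra on $\smaff_k$ in the sectionwise stable model structure. Since symmetric spectra form a cellular, proper, spectral, weakly finitely generated symmetric monoidal model category, all of these properties pass to the projective stable structure on $\Mod\cc O$ by the standard diagram-category arguments. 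Being cellular and left proper, this structure admits the left Bousfield localization at the set $\cc N_{\cc O}$, and the result is exactly the Nisnevich local model structure of Definition~\ref{1214}; cellularity, left properness and the spectral enrichment are preserved under left Bousfield localization at a set of maps between cofibrant objects.

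The heart of the matter — and the step I expect to be the main obstacle — is the ``moreover'' clause describing the local weak equivalences. By the enriched Yoneda lemma $\underline{\Mod}\cc O(\cc O(-,X),M)\cong M(X)$, an $\cc O$-module $M$ is $\cc N_{\cc O}$-local precisely when it is objectwise stably fibrant and, for every elementary distinguished square $Q$ as in~\eqref{squareQ}, the square obtained by applying $M$ to $\cc O Q$ is a homotopy pullback of spectra; that is, precisely when the underlying presheaf of $S^1$-spectra $UM$ is fibrant in $Sp^\Sigma_{S^1}(k)$ with the Nisnevich local stable model structure. Here it is essential that $\cc O$ is Nisnevich excisive: this guarantees that objectwise Nisnevich local fibrant replacement carries $\cc O$-modules to $\cc O$-modules through a pointwise $\cc N_{\cc O}$-local equivalence — equivalently, that the free-module left adjoint $F\colon Sp^\Sigma_{S^1}(k)\to\Mod\cc O$ to the forgetful functor $U$ sends the Nisnevich-localizing maps of $Sp^\Sigma_{S^1}(k)$ to the generators $\cyl(A_{\cc O Q}\to\cc O(-,X))$ of $\cc N_{\cc O}$ (since $F$ carries the suspension spectrum of the representable presheaf on $X$ to $\cc O(-,X)$). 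Granting this, a map of $\cc O$-modules is a $\cc N_{\cc O}$-local weak equivalence if and only if it becomes a stable equivalence after objectwise Nisnevich local fibrant replacement, i.e. if and only if its underlying map is a Nisnevich local stable equivalence in $Sp^\Sigma_{S^1}(k)$. This also yields weak finite generation of the localized structure, since $\cc N_{\cc O}$ consists of maps between finitely presentable $\cc O$-modules and Nisnevich local stable equivalences of $S^1$-spectra of simplicial presheaves on the essentially small site $\smaff_k$ are closed under filtered colimits, so that the usual $\Lambda$-construction provides a set of generating acyclic cofibrations with finitely presentable source and target.

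Right properness — which is never automatic for a left Bousfield localization — then reduces to a known fact. Since, by the previous paragraph, local weak equivalences and local fibrations of $\cc O$-modules are detected on underlying presheaves of $S^1$-spectra, a base change of a local weak equivalence along a local fibration is again a local weak equivalence as soon as the Nisnevich local stable model structure on $Sp^\Sigma_{S^1}(k)$ is right proper, which it is (this is inherited from the right properness of symmetric spectra together with the fact that the Nisnevich topology has enough points; cf.~\cite{MV}). Thus the Nisnevich local model structure on $\Mod\cc O$ is cellular, proper, spectral and weakly finitely generated.

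Finally, suppose $\cc O$ is symmetric monoidal. By Day's theorem~\cite{Day}, $\Mod\cc O$ carries the closed symmetric monoidal structure~\eqref{smash}, with unit $\cc O(-,\pt)$ and $\cc O(-,X)\wedge_{\cc O}\cc O(-,Y)\cong\cc O(-,X\times Y)$. To verify compatibility with the Nisnevich local model structure I would check the pushout--product axiom on generating (acyclic) cofibrations: for cofibrations it is unaffected by the localization and follows from the monoidality of the projective structure, hence of symmetric spectra; for the acyclic part one combines the detection of local equivalences from the second paragraph with the fact that the family of elementary distinguished squares is stable under the base change $-\times Y$, so that, up to local weak equivalence, $\cc N_{\cc O}$ is closed under $-\wedge_{\cc O}\cc O(-,Y)$ for $Y\in\smaff_k$; the unit axiom holds because $\cc O(-,\pt)$ is cofibrant. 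One concludes that $\Mod\cc O$ with the Nisnevich local model structure is symmetric monoidal with respect to~\eqref{smash}, completing the plan.
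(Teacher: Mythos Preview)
The paper does not supply a proof of this theorem: as the attribution in the header indicates, it is quoted from~\cite{GP}, where the argument is carried out. Your plan is a faithful outline of that argument and is essentially correct.

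One point deserves tightening. You write that Nisnevich excisiveness ``guarantees that objectwise Nisnevich local fibrant replacement carries $\cc O$-modules to $\cc O$-modules \ldots\ --- equivalently, that the free-module left adjoint $F$ \ldots\ sends the Nisnevich-localizing maps of $Sp^\Sigma_{S^1}(k)$ to the generators \ldots\ of $\cc N_{\cc O}$.'' The second clause holds automatically and does not use excisiveness: $F$ takes $\Sigma^\infty X_+$ to $\cc O(-,X)$ by construction, so it carries the standard Nisnevich-localizing maps to $\cc N_{\cc O}$ regardless of any hypothesis on $\cc O$. What excisiveness actually provides is the statement about the \emph{forgetful} functor: the underlying presheaf-of-spectra map of each generator in $\cc N_{\cc O}$ is a Nisnevich local equivalence (this is exactly the content of Definition~\ref{1214}). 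Since $U$ preserves colimits and objectwise equivalences, it then sends every $\cc N_{\cc O}$-local trivial cofibration---hence every $\cc N_{\cc O}$-local equivalence---to a Nisnevich local equivalence in $Sp^\Sigma_{S^1}(k)$. Combined with your correct identification of the $\cc N_{\cc O}$-local objects as those whose underlying presheaf is Nisnevich-locally fibrant, this is what makes $U$ create weak equivalences in the localized structures. With this correction in place, your derivation of the ``moreover'' clause, and of right properness and weak finite generation from it, goes through as written.
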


We can consider a spectral category $\Mtrc^{\Delta}_*(k)$ which is obtained
from $\Mtrc_*(k)$ by applying the Suslin complex to symmetric spectra of morphisms:
   $$\Mtrc^{\Delta}_*(k)(X,Y):=(\Mtrc_0(k[Y],k[X]^\Delta),\Mtrc_1(k[Y],k[X]^\Delta),\ldots).$$
It is symmetric monoidal by Theorem~\ref{avangard}. Denote by $\Mod\Mtrc^{\Delta}_*(k)$
the closed symmetric monoidal category of right 
$\Mtrc^{\Delta}_*(k)$-modules. By definition, $\Mod\Mtrc^{\Delta}_*(k)$ is the category of contravariant 
spectral functors from $\Mtrc^{\Delta}_*(k)$ to $Sp_{S^1}^\Sigma$.

The Nisnevich local model structure on ${\Mod}\Mtrc^{\Delta}_*(k)$ and
its homotopy category $D_{\mathsf{mtr}}^\Delta(k)$ is defined similarly to Definition~\ref{1214}. 
Namely,
   $$D_{\mathsf{mtr}}^\Delta(k):=\shnis\Mtrc^{\Delta}_*(k).$$
We call the category $D_{\mathsf{mtr}}^{\Delta}(k)$ the {\it triangulated category of symmetric matrix motives}.

In this setting, 
the {\it symmetric matrix motive $M^\Sigma_{\mathrm{mtr}}(X)$ of $X\in\smaff_k$} is the spectral functor taking
$Y\in\smaff_k$ to the symmetric $S^1$-spectrum $M^\Sigma_{\mathrm{mtr}}(k[X])(k[Y])$, where $M^\Sigma_{\mathrm{mtr}}(k[X])$
is the symmetric matrix motive of the $k$-algebra $k[X]$ in the sense of Definition~\ref{mtrmotsym}.

The proof of the following statement is the same with that for Proposition~\ref{mtrmodulevar}.

\begin{proposition}\label{frmodulevar}
Given a field $k$ and $X\in\smaff_k$, the following
statements are true:
\begin{enumerate}
\item $M^\Sigma_{\mathrm{mtr}}(X)$ is an $\Mtrc^{\Delta}_*(k)$-module;

\item the canonical map
$\alpha:\Mtrc^{\Delta}_*(-,X)\to M^\Sigma_{\mathrm{mtr}}(X)$ in ${\Mod}\Mtrc^{\Delta}_*(k)$ 
is a sectionwise $\pi_*$-iso\-mor\-phism (i.e. a stable equivalence of
ordinary spectra).
\end{enumerate}
\end{proposition}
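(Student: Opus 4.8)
The plan is to transcribe the proof of Proposition~\ref{mtrmodulevar}, replacing the unital $k$-algebra $A$ by $k[X]$ and letting the second variable range over the representable algebras $k[Y]$, $Y\in\smaff_k$. Part~(1) is immediate: the $\Mtrc^{\Delta}_*(k)$-action on $M^\Sigma_{\mathrm{mtr}}(X)$ is given by the same bimorphism formula~\eqref{bim} that provides the spectral category structure of Theorem~\ref{avangard}, and its associativity and unit coherences reduce to the strict associativity of composition of matrix correspondences exactly as for the representable module $\Mtrc^{\Delta}_*(-,X)$; the canonical map $\alpha$ is then visibly a morphism of $\Mtrc^{\Delta}_*(k)$-modules.

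For part~(2) I would reproduce the stabilization argument verbatim. Introduce the naive stabilization functor $\Theta^\infty_{S^1}$ of $S^1$-spectra, which inverts stable equivalences by~\cite[Proposition~4.7]{H}. Put $\cc A_{n,r}:=\Hom_{\aha}(k[X]\odot S^r,M_{2^n}(k[Y]^\Delta))$ and $\cc B_{m,n,r}:=\Hom_{\aha}(k[X]\odot S^r,M_{2^{m+n}}(k[Y]^\Delta))$, form the colimits $\cc A_r:=\colim_{n\geq r}\cc A_{n,r}$, $\cc B_{m,r}:=\colim_n\cc B_{m,n,r}$, $\cc B_r:=\colim_{m,n}\cc B_{m,n,r}$ with structure maps induced by $\sigma_{k[Y]}$ and $\sigma_{k[X]}$, and assemble these into right $S^1$-spectra $\cc A$ and $\cc B$. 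As in Proposition~\ref{mtrmodulevar} they fit into a commutative diagram with top row $\Mtrc^{\Delta}_*(-,X)(Y)\xrightarrow{a}\cc A\xrightarrow{j_{\cc A}}\Theta^\infty_{S^1}(\cc A)$, bottom row $M^\Sigma_{\mathrm{mtr}}(X)(Y)\xrightarrow{b}\cc B\xrightarrow{j_{\cc B}}\Theta^\infty_{S^1}(\cc B)$, and vertical maps $\alpha$, $c$, $\Theta^\infty_{S^1}(c)$, in which $a$ and $b$ are stable equivalences; by two-out-of-three $\alpha$ is a stable equivalence if and only if $c$ is. Since $c$ is the infinite composite $\cc A_r\cong\cc B_{0,r}\to\cc B_{1,r}\to\cdots\to\cc B_r$, it suffices to see that each $\cc B_{m,r}\to\cc B_{1+m,r}$ is a weak equivalence for $r>0$. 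This map is the colimit over $n$ of $\sigma_{k[X]}\colon(k[X]\odot S^r,M_{2^{m+n}}(k[Y]^\Delta))\to(k[X]\odot S^r,M_{2^{1+m+n}}(k[Y]^\Delta))$, which one fits into a triangle with the block identification $M_{2^{m+n}}(k[Y]^\Delta)=M_{2^{1+m}}(M_{2^{n-1}}(k[Y]^\Delta))$ and the stabilization in the $n$-th direction; the composite of the triangle differs from the vertical map by the shuffle permutation $\chi_{m+n,1}$, and the proof of Lemma~\ref{pin} shows the triangle commutes up to simplicial homotopy. Because the horizontal map (stabilization in the $n$-th direction) is an isomorphism on $\colim_n$, the vertical map induces a bijection on $\colim_n\pi_*$; since $r>0$ the cosimplicial algebra $k[X]\odot S^r$ is reduced and all the spaces in sight are connected, so the vertical map is a weak equivalence, as required.

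The main obstacle is exactly the one in the model proof: producing the simplicial homotopy that makes the last triangle commute, which is where the rotational-matrix and even-permutation homotopies underlying Lemma~\ref{pin} are needed, and checking that it is natural enough to survive passage to $\colim_n$; everything else is formal two-out-of-three bookkeeping. The hypothesis that $k$ is a field with $X\in\smaff_k$ plays no essential role in the argument beyond making $\Mtrc^{\Delta}_*(-,X)$ an object of the variety-indexed module category $\Mod\Mtrc^{\Delta}_*(k)$ and ensuring the geometric-realization and connectivity facts used throughout are available; the proof is otherwise identical to that of Proposition~\ref{mtrmodulevar}.
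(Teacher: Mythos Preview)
Your proposal is correct and is exactly what the paper does: the paper's proof of this proposition consists of the single sentence ``The proof of the following statement is the same with that for Proposition~\ref{mtrmodulevar},'' and you have faithfully transcribed that argument with $A=k[X]$, $B=k[Y]$ and the $\Delta$-decoration. Your closing remark that the field hypothesis plays no essential role beyond placing the objects in the correct module category is also accurate.
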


\begin{theorem}\label{spectralmore}
Let $k$ be a perfect field. Then the following
statements are true:
\begin{enumerate}
\item the symmetric monoidal spectral category $\Mtrc^{\Delta}_*(k)$ is Nisnevich excisive
and the Nisnevich local model structure on $\Mod\Mtrc^{\Delta}_*(k)$ has all the properties of Theorem~\ref{modelmot};

\item the category $D_{\mathsf{mtr}}^\Delta(k)$ is closed symmetric monoidal 
compactly generated triangulated with compact
generators being the symmetric matrix motives $\{M^\Sigma_{\mathrm{mtr}}(X)\mid X\in\smaff_k\}$;

\item the monoidal product $M^\Sigma_{\mathrm{mtr}}(X)\wedge^L M^\Sigma_{\mathrm{mtr}}(Y)$ 
in $D_{\mathsf{mtr}}^\Delta(k)$ is isomorphic to $M^\Sigma_{\mathrm{mtr}}(X\times Y)$;

\item for every $X\in\smaff_k$ the symmetric matrix motive $M^\Sigma_{\mathrm{mtr}}(X)$ is sectionwise
zig-zag stably equivalent to the $K$-motive $M_{\bb K}(X)$ in the sense of~\cite{GP1}. In particular,
   $$K_n(X)=\Hom_{D_{\mathsf{mtr}}^\Delta(k)}(M^\Sigma_{\mathrm{mtr}}(X)[n],M^\Sigma_{\mathrm{mtr}}(pt)),\quad n\in\bb Z,$$
where $pt=\spec(k)$ and $K(X)$ is Quillen's $K$-theory of $X$.
\end{enumerate}
\end{theorem}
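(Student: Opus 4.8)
The plan is to reduce the whole statement to Theorem~\ref{wunderbar}, Theorem~\ref{modelmot}, Proposition~\ref{frmodulevar} and the known properties of the spectral category $\bb K$ from~\cite{GP,GP1}; the only genuinely new ingredient is Nisnevich excisiveness of $\Mtrc^\Delta_*(k)$, which I would establish first. By Theorem~\ref{modelmot} a morphism of $\Mtrc^\Delta_*(k)$-modules is a Nisnevich local weak equivalence \ifff it is one after forgetting the module structure, so Nisnevich excisiveness is a property of the underlying presheaves of symmetric $S^1$-spectra on $\smaff_k$. Fixing an elementary distinguished square $Q$ as in~\eqref{squareQ}, forming $\Mtrc^\Delta_* Q$ as in~\eqref{squareOQ} and evaluating it at an arbitrary $Y\in\smaff_k$, Theorem~\ref{wunderbar} applied with $A$ running over $k[X],k[X'],k[U],k[U']$ and $B=k[Y]$, together with the functoriality of that zigzag in both arguments, connects the square $\Mtrc^\Delta_* Q(Y)$ by a natural zigzag of stable equivalences to the square built from $Y\mapsto\bb K(k[-],k[Y]^\Delta)$. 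Since stable equivalences preserve homotopy pushouts, $\Mtrc^\Delta_* Q$ is a Nisnevich local homotopy pushout \ifff the analogous $\bb K$-square is, and the latter holds by the results of~\cite{GP,GP1} (this is where perfectness of $k$ is used; the Suslin complex does not destroy the property). Once this is known, the remaining assertions of~(1) follow from Theorem~\ref{modelmot}, the symmetric monoidality of $\Mtrc^\Delta_*(k)$ being Theorem~\ref{avangard}.

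For~(2) and~(3): by~\cite{SS1} the representables $\{\Mtrc^\Delta_*(-,X)\mid X\in\smaff_k\}$ are cofibrant compact generators of $\Mod\Mtrc^\Delta_*(k)$ in the projective stable structure, and since the localizing family $\cc N_{\Mtrc^\Delta_*(k)}$ consists of maps between finite homotopy colimits of representables, hence between compact objects, the Bousfield localization $D^\Delta_{\mathrm{mtr}}(k)=\shnis\Mtrc^\Delta_*(k)$ is again compactly generated by the images of these representables. By Proposition~\ref{frmodulevar}(2) and the weak-equivalence statement of Theorem~\ref{modelmot}, the canonical map $\alpha\colon\Mtrc^\Delta_*(-,X)\to M^\Sigma_{\mathrm{mtr}}(X)$ is a Nisnevich local weak equivalence, so $M^\Sigma_{\mathrm{mtr}}(X)\cong\Mtrc^\Delta_*(-,X)$ in $D^\Delta_{\mathrm{mtr}}(k)$ and the $M^\Sigma_{\mathrm{mtr}}(X)$ are compact generators too, proving~(2). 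For~(3), the natural isomorphism $\Mtrc^\Delta_*(-,X)\wedge_{\Mtrc^\Delta_*(k)}\Mtrc^\Delta_*(-,Y)\cong\Mtrc^\Delta_*(-,X\times Y)$ together with cofibrancy of representables gives, in $D^\Delta_{\mathrm{mtr}}(k)$,
$$M^\Sigma_{\mathrm{mtr}}(X)\wedge^L M^\Sigma_{\mathrm{mtr}}(Y)\cong\Mtrc^\Delta_*(-,X)\wedge\Mtrc^\Delta_*(-,Y)\cong\Mtrc^\Delta_*(-,X\times Y)\cong M^\Sigma_{\mathrm{mtr}}(X\times Y).$$

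For~(4): Theorem~\ref{wunderbar} with $A=k[X]$ and $B=k[Y]$ varying, natural in $Y$, yields the asserted sectionwise zig-zag of stable equivalences between $M^\Sigma_{\mathrm{mtr}}(X)(Y)$ and $\bb K(k[X],k[Y]^\Delta)$, the latter being the standard model of the $K$-motive $M_{\bb K}(X)$ of~\cite{GP1}. For the $K$-theory formula, use $M^\Sigma_{\mathrm{mtr}}(X)\cong\Mtrc^\Delta_*(-,X)$ in $D^\Delta_{\mathrm{mtr}}(k)$ and the derived Yoneda lemma for spectral module categories (the source is a cofibrant representable, so $\Hom$ in the homotopy category computes as the homotopy groups of the value at $X$ of a Nisnevich local fibrant model $R_{\nis}$ of the target) to get $\Hom_{D^\Delta_{\mathrm{mtr}}(k)}(M^\Sigma_{\mathrm{mtr}}(X)[n],M^\Sigma_{\mathrm{mtr}}(pt))\cong\pi_n\big((R_{\nis}M^\Sigma_{\mathrm{mtr}}(pt))(X)\big)$. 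By Theorem~\ref{wunderbar} with $A=k$ one has $M^\Sigma_{\mathrm{mtr}}(pt)(X)\simeq\bb K(k,k[X]^\Delta)\simeq K(k[X]^\Delta)$, the Quillen $K$-theory of the Suslin complex of $k[X]$; since $k[X]$ is regular, homotopy invariance of Quillen $K$-theory identifies this with $K(X)$, and since $X\mapsto K(X)$ satisfies Nisnevich descent on $\smaff_k$ (Thomason--Trobaugh), $M^\Sigma_{\mathrm{mtr}}(pt)$ is already sectionwise Nisnevich local, so $(R_{\nis}M^\Sigma_{\mathrm{mtr}}(pt))(X)\simeq K(X)$ and $\pi_n$ of it is $K_n(X)$.

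The main obstacle is step~(1): it is the only place requiring substantial geometric input, and that input is not reproved here but imported from~\cite{GP,GP1} through the comparison zigzag of Theorem~\ref{wunderbar}; the remaining parts are formal consequences of Theorems~\ref{modelmot} and~\ref{wunderbar} and Proposition~\ref{frmodulevar}.
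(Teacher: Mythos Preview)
Your overall architecture matches the paper's proof closely: reduce~(1) to the $\bb K$-side via Theorem~\ref{wunderbar}, and deduce~(2)--(4) formally from Theorem~\ref{modelmot}, Proposition~\ref{frmodulevar}, and the Day-convolution isomorphism for representables. Parts~(2) and~(3) are handled exactly as in the paper.

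There is, however, a genuine gap in your treatment of~(1). What \cite{GP1} establishes is that $\bb K$ itself is Nisnevich excisive, i.e.\ that the square built from $\bb K(-,U'),\bb K(-,X'),\bb K(-,U),\bb K(-,X)$ is a Nisnevich local homotopy pushout. You need the analogous statement for the square built from $\bb K(\Delta^\bullet\times-,U')$ etc., and your assertion that ``the Suslin complex does not destroy the property'' is not justified: the functor $C_*$ does not obviously preserve Nisnevich local weak equivalences (the presheaf $E(-\times\bb A^n)$ need not have the same Nisnevich stalks as $E$). The paper closes this gap by a specific argument: for any $\bb K$-module $M$ the presheaves $\pi_i(C_*M)$ are homotopy invariant with $\bb K_0$-transfers, so over a perfect field Morel's strict $\bb A^1$-invariance theorem forces $C_*M$ to be $\bb A^1$-local after Nisnevich sheafification. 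Hence $C_*\bb K(-,X)_f$ is the \emph{motivic} fibrant replacement of $\bb K(-,X)$; Nisnevich excisiveness of $\bb K$ then gives a motivic homotopy pushout of motivically fibrant objects, which is sectionwise, and one reads off that the $C_*\bb K$-square is Nisnevich locally a homotopy pushout. This is precisely where perfectness enters, and it is not a consequence one can simply import from \cite{GP,GP1} without invoking \cite{Mor}.

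For~(4) your route differs slightly from the paper's. The paper quotes \cite[Theorem~5.11]{GP1} to identify $M_{\bb K}(pt)$ with the Quillen $K$-theory spectrum in $\shnis(k)$ and then passes through the forgetful/free adjunction. You instead use derived Yoneda directly, identify $M^\Sigma_{\mathrm{mtr}}(pt)(X)\simeq K(k[X]^\Delta)\simeq K(X)$ via regularity, and invoke Thomason--Trobaugh descent to conclude that no further Nisnevich localization is needed. Both arguments are valid; yours is somewhat more self-contained, while the paper's keeps the comparison with \cite{GP1} in the foreground.
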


\begin{proof}
(1). It follows from Theorem~\ref{wunderbar} that $\Mtrc^{\Delta}_*(k)$ is Nisnevich excisive
if and only if for any distinguished square~\eqref{squareQ} the square of motivic $S^1$-spectra
   \begin{equation}\label{squareOQK}
    \xymatrix{\bb K(\Delta^\bullet\times-,U')\ar[r]\ar[d]&\bb K(\Delta^\bullet\times-,X')\ar[d]\\
              \bb K(\Delta^\bullet\times-,U)\ar[r]&\bb K(\Delta^\bullet\times-,X)}
   \end{equation}
is locally homotopy pushout.

For any $M\in\Mod\bb K$ the presheaves $\pi_i(C_*(M))$, $i\in\bb Z$, are homotopy invariant
and have $\bb K_0$-transfers, where $\bb K_0$ is the ringoid $\pi_0(\bb K)$. Since $\bb K$ is a strict
$V$-spectral category by~\cite[Theorem~4.23]{GP1}, each Nisnevich sheaf
$\pi_i^{nis}(C_*(M))$ is strictly homotopy invariant and has
$\bb K_0$-transfers. By~\cite[Theorem~6.2.7]{Mor} $C_*(M)$ is $\bb
A^1$-local in the motivic model category structure on
$Sp_{S^1}^\Sigma(k)$. It follows that 
the local fibrant replacement $C_*(M)_f$ of $C_*(M)$ is
motivically fibrant.

As $\bb K$ is Nisnevich excisive by~\cite[Theorem~4.23]{GP1}, it follows that the square
of motivically fibrant presheaves of motivic spectra
   \begin{equation*}\label{squareOQKF}
    \xymatrix{\bb K(\Delta^\bullet\times-,U')_f\ar[r]\ar[d]&\bb K(\Delta^\bullet\times-,X')_f\ar[d]\\
              \bb K(\Delta^\bullet\times-,U)_f\ar[r]&\bb K(\Delta^\bullet\times-,X)_f}
   \end{equation*}
is homotopy pushout in the motivic model category structure on $Sp_{S^1}^\Sigma(k)$. 
Thus the latter square is sectionwise homotopy pushout, and so the square~\eqref{squareOQK}
is locally homotopy pushout.

The fact that the Nisnevich local model structure on 
$\Mod\Mtrc^{\Delta}_*(k)$ has all the properties of Theorem~\ref{modelmot} now follows from the 
fact that $\Mtrc^{\Delta}_*(k)$ is Nisnevich excisive symmetric monoidal.

(2). $D_{\mathsf{mtr}}^\Delta(k)$ is closed symmetric monoidal 
compactly generated triangulated with compact
generators being the representable $\Mtrc^{\Delta}_*(k)$-modules $\{\Mtrc^{\Delta}_*(-,X)\mid X\in\smaff_k\}$.
It remains to apply Proposition~\ref{frmodulevar}(2) to show that
compact generators can be given by the symmetric matrix motives $\{M^\Sigma_{\mathrm{mtr}}(X)\mid X\in\smaff_k\}$.

(3). The isomorphism $M^\Sigma_{\mathrm{mtr}}(X)\wedge^L M^\Sigma_{\mathrm{mtr}}(Y)\cong M^\Sigma_{\mathrm{mtr}}(X\times Y)$ in
$D_{\mathsf{mtr}}^\Delta(k)$ follows from the isomorphism 
   $$\Mtrc^{\Delta}_*(-,X\times Y)\cong\Mtrc^{\Delta}_*(-,X)\wedge\Mtrc^{\Delta}_*(-,Y)
       \cong\Mtrc^{\Delta}_*(-,X)\wedge^L\Mtrc^{\Delta}_*(-,Y)$$
and Proposition~\ref{frmodulevar}(2).

(4). The fact that $M^\Sigma_{\mathrm{mtr}}(X)$ is sectionwise
zig-zag stably equivalent to the $K$-motive $M_{\bb K}(X)$ follows from Theorem~\ref{wunderbar}.
Since $M_{\bb K}(pt)$ represents Quillen's $K$-theory motivic $S^1$-spectrum by~\cite[Theorem~5.11]{GP1} and
   $$K_n(X)=\Hom_{\shnis(k)}(\Sigma^\infty_{S^1}X_+[n],M^\Sigma_{\mathrm{mtr}}(pt))
       \cong\Hom_{D_{\mathsf{mtr}}^\Delta(k)}(M^\Sigma_{\mathrm{mtr}}(X)[n],M^\Sigma_{\mathrm{mtr}}(pt)),\quad n\in\bb Z,$$
our theorem follows.
\end{proof}

\section{The triangulated category of big symmetric matrix motives}\label{sectiontriangbig}

Let $\Fr_0(k)$ be the category whose objects are those of $\smaff_k$ and
whose morphism set between $X$ and $Y$ is given by
the set of framed correspondences of level zero \cite[Example 2.1]{Voe2}, \cite[Definition 2.1]{GP3}.
As we mentioned in Section~\ref{sectionmtr} $\Fr_0(k)$ is dual to the category $\Mtr_0(k)$ 
of smooth commutative $k$-algebras and
non-unital homomorphisms.
As it is shown in~\cite[Section~5]{GP3}, the category of framed correspondences
of level zero $\Fr_0(k)$ has an action by finite pointed sets $Y\otimes
K:=\bigsqcup_{K\setminus *}Y$ with $Y\in\smaff_k$ and $K$ a finite pointed
set. The cone of $Y$ is the simplicial object $Y\otimes I$ in
$\Fr_0(k)$, where $(I,1)$ is the pointed simplicial set $\Delta[1]$
with basepoint 1. There is a natural morphism $i_0:Y\to Y\otimes I$
in $\Delta^{\textrm{\op}}\Fr_0(k)$. Given a closed inclusion of smooth affine
schemes $j: Y\hookrightarrow X$, denote by $X//Y$ a simplicial
object in $\Fr_0(k)$ which is obtained by taking the pushout of the diagram
   $X\hookleftarrow Y\bl{i_0}\hookrightarrow Y\otimes I$
in $\Delta^{\textrm{\op}}\Fr_0(k)$.
The simplicial object $X//Y$ termwise equals
   $X,X\sqcup Y,X\sqcup Y\sqcup Y,\ldots$ .
By
$\bb G_m^{\wedge 1}$ we mean the simplicial object $\bb G_m//\{1\}$ in $\Fr_0(k)$.
It looks termwise as
   $$\bb G_m,\bb G_m\sqcup pt,\bb G_m\sqcup pt\sqcup pt,\ldots$$

Given $\cc X\in\Mod\Mtrc^{\Delta}_*(k)$, we set 
   $$\cc X\langle\bb G_m^{\wedge n}\rangle:=\cc X\wedge_{\Mtrc^{\Delta}_*(k)}\Mtrc^{\Delta}_*(-,\bb G_m^{\wedge n}),\quad n\geq 0,$$
where $\Mtrc^{\Delta}_*(k)(-,\bb G_m^{\wedge n})=\Mtrc^{\Delta}_*(k)(-,\bb G_m^{\wedge 1})
\wedge_{\Mtrc^{\Delta}_*(k)}\bl n\cdots\wedge_{\Mtrc^{\Delta}_*(k)}\Mtrc^{\Delta}_*(k)(-,\bb G_m^{\wedge 1})$.
There is a canonical map induced by the adjunction unit morphism
   \begin{equation}\label{wedgegm}
    \cc X\langle\bb G_m^{\wedge n}\rangle\to
    \uhom_{\Mod\Mtrc^{\Delta}_*(k)}(\Mtrc^{\Delta}_*(-,\bb G_m^{\wedge 1}),\cc X\langle\bb G_m^{\wedge n+1}\rangle).
   \end{equation}

We can stabilize our constructions in the $\bb G_m^{\wedge 1}$-direction as follows. Denote by 
$-\boxtimes\bb G_m^{\wedge 1}$ the endofunctor $\cc X\in\Mod\Mtrc^{\Delta}_*(k)\mapsto\cc X\langle\bb G_m^{\wedge 1}\rangle$.
Following Hovey~\cite[Section~8]{H}, we consider the stable model structure on $\bb G_m^{\wedge 1}$-symmetric spectra 
$Sp^\Sigma(\Mod\Mtrc^{\Delta}_*(k),\bb G_m^{\wedge 1})$ (we start with the Nisnevich local stable model structure
on $\Mod\Mtrc^{\Delta}_*(k)$). Its homotopy category is denoted by $D_{\mathsf{mtr},\bb G_m}^{\Delta}(k)$ and called
the {\it triangulated category of big symmetric matrix motives}.
It is worth mentioning that the definition of $D_{\mathsf{mtr},\bb G_m}^{\Delta}(k)$ is genuinely local, i.e. it avoids $\bb A^1$-localization.

Note that the $\ell$th space of the symmetric matrix motive $M^\Sigma_{\mathrm{mtr}}(X)$ of $X\in\smaff_k$ is a colimit 
   $$M^\Sigma_{\mathrm{mtr}}(X)_\ell=\colim(\Mtrc^{\Delta}_0(k)(M_{2^\ell}(-),X)\to\Mtrc^{\Delta}_{1}(M_{2^\ell}(-),X)\to\Mtrc^{\Delta}_{2}(M_{2^\ell}(-),X)\to\cdots),$$
where the $n$th map takes a section $f:k[X]\odot S^n\to M_{2^{n}}(M_{2^\ell}(k[Y]^\Delta))$ on $Y\in\smaff_k$ to the composition
   $$k[X]\odot S^n\xrightarrow{f}M_{2^n}(M_{2^\ell}(k[Y]^\Delta))\xrightarrow{M_{2^n}(\sigma_{M_{2^\ell}(k[Y]}))}
       M_{2^{n+1}}(M_{2^\ell}(k[Y]^\Delta))\xrightarrow{\chi_{n,1}}M_{2^{1+n}}(M_{2^\ell}(k[Y]^\Delta)).$$

The map~\eqref{wedgegm} yields a morphism
   $$\Mtrc^{\Delta}_*(-,X\times\bb G_m^{\wedge n})\to
      \uhom_{\Mod\Mtrc^{\Delta}_*(k)}(\Mtrc^{\Delta}_*(-,\bb G_m^{\wedge 1}),\Mtrc^{\Delta}_*(-,X\times\bb G_m^{\wedge n+1})).$$
It fits in a commutative diagram
   $$\xymatrix{\Mtrc^{\Delta}_*(-,X\times\bb G_m^{\wedge n})\ar[r]\ar[d]&
      \uhom_{\Mod\Mtrc^{\Delta}_*(k)}(\Mtrc^{\Delta}_*(-,X\times\bb G_m^{\wedge 1}),\Mtrc^{\Delta}_*(-,\bb G_m^{\wedge n+1}))\ar[d]\\
      M^\Sigma_{\mathrm{mtr}}(X\times\bb G_m^{\wedge n})\ar[r]^(.3)\mu&
      \uhom_{\Mod\Mtrc^{\Delta}_*(k)}(\Mtrc^{\Delta}_*(-,\bb G_m^{\wedge 1}),M^\Sigma_{\mathrm{mtr}}(X\times\bb G_m^{\wedge n+1}))}$$

Due to Theorem~\ref{spectralmore} $\Mtrc^{\Delta}_*(k)$ is Nisnevich excisive. Therefore
the local fibrant replacement $\cc X_f$ of a right $\Mtrc^{\Delta}_*(k)$-module $\cc X$ must
also be a local replacement of $\cc X$ in $Sp^\Sigma_{S^1}(k)$ after forgetting the module stucture
by Theorem~\ref{modelmot}.

\begin{theorem}[Cancellation Theorem for matrix motives]\label{cancel}
Let $X\in\smaff_k$ and $n\geq 0$.
Then the following statements are true:

$(1)$ the canonical morphism 
   $$\mu:M^\Sigma_{\mathrm{mtr}}(X\times\bb G_m^{\wedge n})\to
       \uhom_{\Mod\Mtrc^{\Delta}_*(k)}(\Mtrc^{\Delta}_*(-,\bb G_m^{\wedge 1}),M^\Sigma_{\mathrm{mtr}}(X\times\bb G_m^{\wedge n+1}))$$
in $\Mod\Mtrc^{\Delta}_*(k)$ is a sectionwise stable equivalence of $S^1$-spectra;

$(2)$ the induced map in $\Mod\Mtrc^{\Delta}_*(k)$ 
   $$M^\Sigma_{\mathrm{mtr}}(X\times\bb G_m^{\wedge n})_f\to\uhom_{\Mod\Mtrc^{\Delta}_*(k)}(\Mtrc^{\Delta}_*(-,\bb G_m^{\wedge 1}),M^\Sigma_{\mathrm{mtr}}(X\times\bb G_m^{\wedge n+1})_f)$$
is a schemewise stable equivalence of $S^1$-spectra whenever the base field $k$ is perfect;

$(3)$ statements $(1)$ and $(2)$ are also valid for $\Mtrc^{\Delta}_*(-,X\times\bb G_m^{\wedge n})$.
\end{theorem}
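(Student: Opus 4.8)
The Cancellation Theorem here is the matrix-transfer analogue of the $\bb G_m$-cancellation theorem for framed motives, and by Theorem~\ref{wunderbar} it is essentially equivalent to the corresponding statement for $K$-motives. So the strategy is to reduce everything to the already-known cancellation phenomenon in $K$-theory and the structural facts established earlier in the paper. First I would observe that statement $(3)$ is in fact the case worth proving first: by Proposition~\ref{frmodulevar}(2) the canonical map $\Mtrc^{\Delta}_*(-,X\times\bb G_m^{\wedge n})\to M^\Sigma_{\mathrm{mtr}}(X\times\bb G_m^{\wedge n})$ is a sectionwise $\pi_*$-isomorphism, and the functor $\uhom_{\Mod\Mtrc^{\Delta}_*(k)}(\Mtrc^{\Delta}_*(-,\bb G_m^{\wedge 1}),-)$ preserves sectionwise stable equivalences between sectionwise $\Omega$-spectra (which these are in positive degrees by Corollary~\ref{gammacor}). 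Hence $(1)$ for the representable module and $(1)$ for the matrix motive are equivalent; the same remark, together with the fact — recorded just before the theorem — that the local fibrant replacement in $\Mod\Mtrc^{\Delta}_*(k)$ agrees with the local replacement in $Sp^\Sigma_{S^1}(k)$, shows $(2)$ follows from $(1)$ once one knows strict/strong homotopy invariance and the existence of $\bb K_0$-transfers, exactly as in the proof of Theorem~\ref{spectralmore}(1). So the whole theorem reduces to the single assertion $(1)$.

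\textbf{Proving $(1)$.} For this I would translate across the zigzag of Theorem~\ref{wunderbar}: sectionwise, $M^\Sigma_{\mathrm{mtr}}(X\times\bb G_m^{\wedge n})(Y)$ is stably equivalent to $\bb K(k[X\times\bb G_m^{\wedge n}], k[Y]^\Delta)$, i.e.\ to the $K$-motive $M_{\bb K}(X\times\bb G_m^{\wedge n})(Y)$ in the sense of~\cite{GP1}. The map $\mu$ is, under this identification, the adjunction/structure map exhibiting $M_{\bb K}(X\times\bb G_m^{\wedge n})$ as a $\bb G_m^{\wedge 1}$-deloop of $M_{\bb K}(X\times\bb G_m^{\wedge n+1})$; concretely it is induced by $\mathrm{id}\wedge$ the canonical class in $\Mtrc^{\Delta}_*(-,\bb G_m^{\wedge 1})$, composed through the $\bb G_m$-stabilization data described right before the theorem. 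Thus what must be shown is that the presheaf of $S^1$-spectra $M_{\bb K}(X\times\bb G_m^{\wedge n})$ is sectionwise a $\bb G_m$-$\Omega$-spectrum, i.e.\ $\bb K$-theory is cancellative in the $\bb G_m$-direction. This is the heart of the matter and is exactly the content of the cancellation theorem for $KGL$-modules / $K$-motives: smashing with $\bb G_m$ and taking $\uhom(\bb G_m,-)$ are mutually inverse on the relevant homotopy category because $KGL$ is $(\bb P^1$, hence $\bb G_m$-$)$periodic. I would invoke the cancellation theorem for $K$-motives from~\cite{GP1} (the $K$-theoretic avatar of Voevodsky's cancellation theorem), applied to the effective $K$-motivic spectrum $M_{\bb K}(X\times\bb G_m^{\wedge n})$, to conclude that $\mu$ is a sectionwise stable equivalence.

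\textbf{The main obstacle.} The delicate point is not the formal reductions but making the identification of $\mu$ with the $\bb G_m$-delooping map precise at the level of symmetric spectra of matrix transfers, so that the cancellation result of~\cite{GP1} genuinely applies. One has to check that the simplicial object $\bb G_m^{\wedge 1}=\bb G_m//\{1\}$ in $\Fr_0(k)$ used on the framed/algebraic side corresponds, under the duality $\Fr_0(k)\simeq\Mtr_0(k)^{\op}$ and after applying $C_*$ and the Segal machine, to the $\bb G_m^{\wedge 1}$-object used to form $\Mtrc^{\Delta}_*(-,\bb G_m^{\wedge n})$; and that the zigzag of Theorem~\ref{wunderbar}, which is only a zigzag of stable equivalences of \emph{non-symmetric} $S^1$-spectra, is compatible with the $\bb G_m$-module structures up to coherent homotopy, or at least enough so that it induces an isomorphism after passing to the Nisnevich-local homotopy category $D^\Delta_{\mathsf{mtr},\bb G_m}(k)$. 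I expect this compatibility bookkeeping — tracking the shuffle permutations $\chi_{n,1}$ and the $M_{2^n}(\sigma_B)$-maps through the equivalences — to be the most technical step, whereas once the identification is in place, $(1)$ is a direct citation and $(2)$, $(3)$ follow by the functoriality and local-replacement arguments already assembled in the proof of Theorem~\ref{spectralmore}.
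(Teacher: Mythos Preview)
Your overall architecture --- reduce via Theorem~\ref{wunderbar} to the analogous statement for $\bb K$-motives, handle (2) through strict homotopy invariance and $\bb K_0$-transfers, and pass between (1) and (3) via Proposition~\ref{frmodulevar}(2) --- is exactly the paper's. The paper runs the implications in the order $(1)\Rightarrow(2)$ and then $(1)+(2)\Rightarrow(3)$, but your observation that (1) and (3) are interchangeable through Proposition~\ref{frmodulevar}(2) is fine and equivalent.

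The gap is in the core of (1). You propose to obtain cancellation for $\bb K$-motives either by citing a ready-made theorem from~\cite{GP1} or by invoking $KGL$-periodicity. Neither works as written. The paper does not cite a black-box cancellation result from~\cite{GP1}; it says the argument is carried out ``similarly to~\cite[Theorem~4.6]{GP2}''. More importantly, Bott periodicity of $KGL$ is a statement in $SH(k)$, i.e.\ after Nisnevich localization, $\bb A^1$-localization and $\bb G_m$-stabilization; it does not by itself yield a \emph{sectionwise} stable equivalence of presheaves of $S^1$-spectra
\[
\bb K(\Delta^\bullet\times-,X\times\bb G_m^{\wedge n})\;\longrightarrow\;\uhom(\bb G_m^{\wedge 1},\bb K(\Delta^\bullet\times-,X\times\bb G_m^{\wedge n+1})),
\]
which is precisely what (1) asserts. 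Indeed this sectionwise statement is what one needs in order to identify the effective cover $\kgl=f_0(KGL)$ in Lemma~\ref{fzero}, so you cannot bootstrap it from periodicity of $KGL$ without circularity.

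The mechanism the paper uses, and which your sketch omits, is the Grayson tower
\[
\cdots\to\bb K(\Delta^\bullet\times-,X\times\bb G_m^{\wedge 2})\to\bb K(\Delta^\bullet\times-,X\times\bb G_m^{\wedge 1})\to\bb K(\Delta^\bullet\times-,X),
\]
whose layers are Eilenberg--Mac\,Lane $\bb K_0$-modules on the presheaves $\bb K_0(\Delta^\bullet\times-,X\times\bb G_m^{\wedge n})$. Suslin's cancellation theorem~\cite[Theorem~4.11]{S} gives the sectionwise equivalence on each layer, and the strongly convergent spectral sequence attached to the tower (as in~\cite[Theorem~7.13]{GP}) then propagates it to the full $\bb K$-spectrum. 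That is the actual content you are treating as a citation. For (2) the paper likewise does not derive it formally from (1) but repeats the argument of~\cite[Theorem~A(2)]{AGP} with $\bb K_0$-transfers in place of framed ones, using~\cite[Section~4]{S}; your sketch for (2) is compatible with this but again leaves out the specific input.

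Finally, your anticipated ``main obstacle'' --- the compatibility bookkeeping for the zigzag of Theorem~\ref{wunderbar} with the $\bb G_m$-structure --- is not where the difficulty lies. That zigzag is functorial in level-zero correspondences in both variables, which already covers the structure maps of $\bb G_m^{\wedge 1}=\bb G_m//\{1\}$; the paper uses it as a straightforward reduction without further comment.
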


\begin{proof}
(1). Due to Theorem~\ref{wunderbar} the statement reduces to Cancellation Theorem for $\bb K$-motives.
The latter is proven similarly to~\cite[Theorem~4.6]{GP2}. Namely, consider the Grayson 
tower for $\bb K$-motives~\cite[Definition 7.12]{GP}
   $$\cdots\to\bb K(\Delta^\bullet\times-,X\times\bb G_m^{\wedge 2})\to
       \bb K(\Delta^\bullet\times-,X\times\bb G_m^{\wedge 1})\to\bb K(\Delta^\bullet\times-,X).$$
Its layers are Eilenberg--Mac Lane $\bb K_0$-modules associated to simplicial Abelian presheaves (up to shift)
$\bb K_0(\Delta^\bullet\times-,X\times\bb G_m^{\wedge n})$. By a theorem of Suslin~\cite[Theorem~4.11]{S}
the canonical morphism
   $$\bb K_0(\Delta^\bullet\times-,X\times\bb G_m^{\wedge n})\to
       [\bb G_m^{\wedge 1},\bb K_0(\Delta^\bullet\times-,X\times\bb G_m^{\wedge n+1})]$$
is a sectionwise weak equivalence. Similarly to~\cite[Theorem~7.13]{GP}
the Grayson tower yields a strongly convergent spectral sequence with $E_2^{*,*}$-page being 
$\pi_*(\bb K_0(\Delta^\bullet\times-,X\times\bb G_m^{\wedge *}))$ (up to shift of indices). The Suslin 
theorem now implies the claim (see the proof of~\cite[Theorem~4.6]{GP2} as well).

(2). The proof now literally repeats that for framed motives of algebraic varieties~\cite[Theorem~A(2)]{AGP} if we use 
properties of homotopy invariant presheaves with $\bb K_0$-transfers (see~\cite[Section~4]{S}) instead of framed transfers.
An alternative proof is like that for~\cite[Theorem~4.6]{GP2} if we use~\cite[Corollary~4.14]{S}.

(3). This statement immediately follows from $(1)$, $(2)$ and Proposition~\ref{mtrmodulevar}(2).
\end{proof}

\begin{corollary}\label{bispectrum}
Let $k$ be a perfect field and $X\in\smaff_k$.
Then the bispectrum
   $$M_{\mathrm{mtr}}^{\Sigma,\bb G}(X)_f=(M^\Sigma_{\mathrm{mtr}}(X)_f,M^\Sigma_{\mathrm{mtr}}(X\times\bb G_m^{\wedge 1})_f,
       M^\Sigma_{\mathrm{mtr}}(X\times\bb G_m^{\wedge 2})_f,\ldots)$$
with structure maps those of
Theorem~\ref{cancel}(2) is a motivically fibrant $(S^1,\bb G^{\wedge 1}_m)$-bispectrum.
\end{corollary}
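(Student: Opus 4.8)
The plan is to unpack the definition of a motivically fibrant $(S^1,\bb G^{\wedge 1}_m)$-bispectrum and to verify two things: (i) each level $M^\Sigma_{\mathrm{mtr}}(X\times\bb G_m^{\wedge n})_f$ is a motivically fibrant $S^1$-spectrum, and (ii) the adjoint structure maps supplied by Theorem~\ref{cancel}(2) present the bispectrum as a $\bb G_m^{\wedge 1}$-$\Omega$-spectrum, i.e.\ they are motivic weak equivalences onto the $\bb G_m^{\wedge 1}$-loop object of the next level.

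For (i) I would argue as in the proof of Theorem~\ref{spectralmore}(1). The functors $M^\Sigma_{\mathrm{mtr}}(-)$ and $\bb K(\Delta^\bullet\times-,-)$ extend to simplicial smooth affine schemes by realisation, and the zigzag of stable equivalences of Theorem~\ref{wunderbar} is functorial, so $M^\Sigma_{\mathrm{mtr}}(X\times\bb G_m^{\wedge n})$ is sectionwise stably equivalent, as a right $\Mtrc^{\Delta}_*(k)$-module, to the $\bb K$-motive $\bb K(\Delta^\bullet\times-,X\times\bb G_m^{\wedge n})$. Hence, degreewise in the simplicial coordinate coming from $\bb G_m^{\wedge n}$ and therefore also after passing to the diagonal, the homotopy presheaves $\pi_i$ of these $S^1$-spectra are homotopy invariant and carry $\bb K_0$-transfers, $\bb K_0=\pi_0(\bb K)$. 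Since $\bb K$ is a strict $V$-spectral category which is Nisnevich excisive by~\cite{GP1}, the associated Nisnevich sheaves $\pi_i^{nis}$ are strictly homotopy invariant, and by~\cite[Theorem~6.2.7]{Mor} the module $\bb K(\Delta^\bullet\times-,X\times\bb G_m^{\wedge n})$ is $\bb A^1$-local in $Sp_{S^1}^\Sigma(k)$; consequently its Nisnevich-local fibrant replacement is already motivically fibrant. Since $\Mtrc^{\Delta}_*(k)$ is Nisnevich excisive (Theorem~\ref{spectralmore}(1)), the local fibrant replacement formed inside $\Mod\Mtrc^{\Delta}_*(k)$ agrees, after forgetting the module structure, with the local replacement in $Sp_{S^1}^\Sigma(k)$, as observed just before Theorem~\ref{cancel}. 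Therefore $M^\Sigma_{\mathrm{mtr}}(X\times\bb G_m^{\wedge n})_f$ is a motivically fibrant $S^1$-spectrum for every $n\geq 0$.

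For (ii) recall that the endofunctor $-\boxtimes\bb G_m^{\wedge 1}$ on $\Mod\Mtrc^{\Delta}_*(k)$ has right adjoint $\uhom_{\Mod\Mtrc^{\Delta}_*(k)}(\Mtrc^{\Delta}_*(-,\bb G_m^{\wedge 1}),-)$, which is exactly the $\bb G_m^{\wedge 1}$-loop functor entering the stable model structure on $Sp^\Sigma(\Mod\Mtrc^{\Delta}_*(k),\bb G_m^{\wedge 1})$. The representable module $\Mtrc^{\Delta}_*(-,\bb G_m^{\wedge 1})$ is cofibrant, so by the spectrality and symmetric monoidality of the Nisnevich local model structure (Theorem~\ref{modelmot}) the object $\uhom_{\Mod\Mtrc^{\Delta}_*(k)}(\Mtrc^{\Delta}_*(-,\bb G_m^{\wedge 1}),M^\Sigma_{\mathrm{mtr}}(X\times\bb G_m^{\wedge n+1})_f)$ is Nisnevich-locally fibrant; by Theorem~\ref{cancel}(2) it is moreover Nisnevich-locally equivalent to the $\bb A^1$-local object $M^\Sigma_{\mathrm{mtr}}(X\times\bb G_m^{\wedge n})_f$, hence is itself $\bb A^1$-local, that is, motivically fibrant. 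Finally, by Theorem~\ref{cancel}(2) (valid since $k$ is perfect) each structure map
$$M^\Sigma_{\mathrm{mtr}}(X\times\bb G_m^{\wedge n})_f\to\uhom_{\Mod\Mtrc^{\Delta}_*(k)}(\Mtrc^{\Delta}_*(-,\bb G_m^{\wedge 1}),M^\Sigma_{\mathrm{mtr}}(X\times\bb G_m^{\wedge n+1})_f)$$
is a schemewise stable equivalence of $S^1$-spectra, hence a motivic weak equivalence between motivically fibrant objects. Thus $M_{\mathrm{mtr}}^{\Sigma,\bb G}(X)_f$ is levelwise motivically fibrant and a $\bb G_m^{\wedge 1}$-$\Omega$-spectrum, i.e.\ a motivically fibrant $(S^1,\bb G^{\wedge 1}_m)$-bispectrum.

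The main obstacle is the $\bb A^1$-locality input in (i): one must know that tensoring with $\bb G_m^{\wedge n}$ and applying the Suslin complex preserve homotopy invariance and $\bb K_0$-transfers, so that Morel's machinery can upgrade ``Nisnevich-locally fibrant'' to ``motivically fibrant'' at each level. This is precisely the package of homotopy invariant presheaves with $\bb K_0$-transfers (Morel's strict homotopy invariance theorem and Suslin's results~\cite{S}) already used in the proofs of Theorem~\ref{spectralmore}(1) and Theorem~\ref{cancel}; once it is granted, together with the Cancellation Theorem~\ref{cancel}(2), the statement is formal, the only remaining care being the bookkeeping identification of the internal-Hom target with the $\bb G_m^{\wedge 1}$-loop functor of the stabilisation.
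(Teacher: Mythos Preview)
Your argument is correct and is exactly the intended one: the paper states the corollary without proof because it is immediate from the remark preceding Theorem~\ref{cancel} (that the Nisnevich-local fibrant replacement in $\Mod\Mtrc^{\Delta}_*(k)$ coincides with the local replacement in $Sp^\Sigma_{S^1}(k)$, hence is motivically fibrant by the strict homotopy invariance argument of Theorem~\ref{spectralmore}(1)) together with Theorem~\ref{cancel}(2) for the $\Omega$-spectrum condition in the $\bb G_m^{\wedge 1}$-direction. Your write-up simply spells out these two ingredients and the formal bookkeeping, which the paper leaves implicit.
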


Given $\cc X\in D_{\mathsf{mtr}}^{\Delta}(k)$ we write $\cc X(1)$ to denote $\cc X\boxtimes^L\bb G_m^{\wedge 1}$.

\begin{corollary}\label{888}
Let $k$ be a perfect field. Then for any $\cc X,\cc Y\in D_{\mathsf{mtr}}^{\Delta}(k)$ the map
   $$\Hom_{D_{\mathsf{mtr}}^{\Delta}(k)}(\cc X,\cc Y)\to\Hom_{D_{\mathsf{mtr}}^{\Delta}(k)}(\cc X(1),\cc Y(1))$$
is an isomorphism.
\end{corollary}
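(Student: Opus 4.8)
The plan is to derive the statement from the Cancellation Theorem~\ref{cancel} together with the compact generation of $D_{\mathsf{mtr}}^{\Delta}(k)$ by symmetric matrix motives (Theorem~\ref{spectralmore}(2)). By Theorem~\ref{spectralmore}(1)--(2) and Theorem~\ref{modelmot} the category $D_{\mathsf{mtr}}^{\Delta}(k)$ is closed symmetric monoidal triangulated, so the triangulated endofunctor $-(1)=-\boxtimes^L\bb G_m^{\wedge 1}$ admits a right adjoint, namely (the derived functor of) $R(-):=\uhom_{\Mod\Mtrc^{\Delta}_*(k)}(\Mtrc^{\Delta}_*(-,\bb G_m^{\wedge 1}),-)$. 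Write $\eta_{\cc Y}\colon\cc Y\to R(\cc Y(1))$ for the unit of this derived adjunction. Under the natural isomorphism $\Hom_{D_{\mathsf{mtr}}^{\Delta}(k)}(\cc X(1),\cc Y(1))\cong\Hom_{D_{\mathsf{mtr}}^{\Delta}(k)}(\cc X,R(\cc Y(1)))$ the map in the statement is identified, by naturality of $\eta$, with post-composition by $\eta_{\cc Y}$; so by Yoneda the corollary is equivalent to the claim that $\eta_{\cc Y}$ is an isomorphism in $D_{\mathsf{mtr}}^{\Delta}(k)$ for every $\cc Y$.

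First I would check that the full subcategory $\cc L\subseteq D_{\mathsf{mtr}}^{\Delta}(k)$ of objects $\cc Y$ with $\eta_{\cc Y}$ invertible is localizing. The endofunctors $-(1)$ and $R$ are triangulated, the former preserves coproducts as a left adjoint, and the latter preserves coproducts as well: $\bb G_m^{\wedge 1}=\bb G_m//\{1\}$ yields a cofibre sequence $\Mtrc^{\Delta}_*(-,pt)\to\Mtrc^{\Delta}_*(-,\bb G_m)\to\Mtrc^{\Delta}_*(-,\bb G_m^{\wedge 1})$ of $\Mtrc^{\Delta}_*(k)$-modules whose first two terms are compact generators, so $\Mtrc^{\Delta}_*(-,\bb G_m^{\wedge 1})$ is compact; since the compact objects are closed under $\wedge^L$ (Theorem~\ref{spectralmore}(3) applied to representables), the isomorphism $\Hom(\cc Z,R(-))\cong\Hom(\cc Z\boxtimes^L\bb G_m^{\wedge 1},-)$ for compact $\cc Z$ shows the canonical map $\coprod_i R(\cc Y_i)\to R(\coprod_i\cc Y_i)$ to be invertible. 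Hence $\cc L$ is closed under shifts, cones, coproducts and retracts, i.e. localizing.

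It then remains to show that $M^\Sigma_{\mathrm{mtr}}(X)\in\cc L$ for all $X\in\smaff_k$. By Proposition~\ref{frmodulevar}(2) there is an isomorphism $\Mtrc^{\Delta}_*(-,X)\cong M^\Sigma_{\mathrm{mtr}}(X)$ in $D_{\mathsf{mtr}}^{\Delta}(k)$, whence
$$M^\Sigma_{\mathrm{mtr}}(X)(1)\cong\Mtrc^{\Delta}_*(-,X)\wedge^L\Mtrc^{\Delta}_*(-,\bb G_m^{\wedge 1})\cong\Mtrc^{\Delta}_*(-,X\times\bb G_m^{\wedge 1})\cong M^\Sigma_{\mathrm{mtr}}(X\times\bb G_m^{\wedge 1}),$$
using the natural isomorphism $\Mtrc^{\Delta}_*(-,X)\wedge\Mtrc^{\Delta}_*(-,Y)\cong\Mtrc^{\Delta}_*(-,X\times Y)$ and Proposition~\ref{frmodulevar}(2) once more. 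Under these identifications $\eta_{M^\Sigma_{\mathrm{mtr}}(X)}$ becomes the morphism $\mu$ of Theorem~\ref{cancel}(1) with $n=0$, which is a sectionwise stable equivalence of $S^1$-spectra, hence an isomorphism in $D_{\mathsf{mtr}}^{\Delta}(k)$. Since $\{M^\Sigma_{\mathrm{mtr}}(X)\mid X\in\smaff_k\}$ is a set of compact generators of $D_{\mathsf{mtr}}^{\Delta}(k)$ (Theorem~\ref{spectralmore}(2)), the localizing subcategory $\cc L$ is all of $D_{\mathsf{mtr}}^{\Delta}(k)$, which completes the argument.

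The homotopy-theoretic content is carried entirely by the Cancellation Theorem~\ref{cancel}, which is assumed; the rest is bookkeeping, but two points will need genuine care in the write-up. The first is the compactness of $\Mtrc^{\Delta}_*(-,\bb G_m^{\wedge 1})$, i.e. that it lies in the thick subcategory generated by the representable modules — this is precisely the cofibre sequence exhibited above, and one should spell out that $pt\to\bb G_m$ induces a cofibration of representables so that $\Mtrc^{\Delta}_*(-,\bb G_m^{\wedge 1})$ really is the cofibre. The second, and more delicate, is the identification of the abstract adjunction unit $\eta_{M^\Sigma_{\mathrm{mtr}}(X)}$ with the explicit coevaluation-type morphism $\mu$ of~\eqref{wedgegm}: this is the standard matching of the unit of a $(-\boxtimes^L\bb G_m^{\wedge 1},\uhom(\Mtrc^{\Delta}_*(-,\bb G_m^{\wedge 1}),-))$-adjunction with the canonical map, but it has to be traced through the zigzag of stable equivalences of Theorem~\ref{wunderbar} relating $M^\Sigma_{\mathrm{mtr}}(X)$, $\Mtrc^{\Delta}_*(-,X)$ and $\bb K(-,X^\Delta)$.
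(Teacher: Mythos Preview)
Your argument is correct and is precisely the standard deduction the paper has in mind; the corollary is stated without proof because it follows from Theorem~\ref{cancel} by the localizing-subcategory argument you give. One small refinement: when you identify $\eta_{M^\Sigma_{\mathrm{mtr}}(X)}$ with $\mu$, cite part~(2) or~(3) of Theorem~\ref{cancel} rather than~(1), since the right adjoint $R$ in $D_{\mathsf{mtr}}^{\Delta}(k)$ is the \emph{derived} internal Hom and is therefore computed by $\uhom_{\Mod\Mtrc^{\Delta}_*(k)}(\Mtrc^{\Delta}_*(-,\bb G_m^{\wedge 1}),-)$ only after Nisnevich-locally fibrantly replacing the target; part~(2) is stated exactly in that form.
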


Define a suspension functor 
 $\Sigma^\infty_{\bb G_m}$ from ${\Mod}\Mtrc^{\Delta}_*(k)$ to $Sp^\Sigma(\Mod\Mtrc^{\Delta}_*(k),\bb G_m^{\wedge 1})$ as
   $$\Sigma^\infty_{\bb G_m}(\cc X):=(\cc X(\pt),\cc X\langle\bb G_m^{\wedge 1}\rangle,\cc X\langle\bb G_m^{\wedge 2}\rangle,\ldots).$$
Each structure map is induced by~\eqref{wedgegm}. It is left adjoint to the functor taking an object of 
$Sp^\Sigma(\Mod\Mtrc^{\Delta}_*(k),\bb G_m^{\wedge 1})$ to its zeroth entry.

Denote by $D_{\mathsf{mtr},\bb G_m}^{\Delta,\mathsf{eff}}(k)$ the full triangulated subcategory of
$D_{\mathsf{mtr},\bb G_m}^{\Delta}(k)$ compactly generated by the suspension $\Mtrc^{\Delta}_*(k)$-modules
$\{\Sigma^\infty_{\bb G_m}(\Mtrc^{\Delta}_*(-,X))\mid X\in\smaff_k\}$.

\begin{corollary}\label{895}
Let $k$ be a perfect field. Then the suspension functor induces an equivalence of compactly generated triangulated categories
   $$L\Sigma^\infty_{\bb G_m}:D_{\mathsf{mtr}}^{\Delta}(k)\lra{\simeq}D_{\mathsf{mtr},\bb G_m}^{\Delta,\mathsf{eff}}(k).$$
\end{corollary}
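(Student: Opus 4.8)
The plan is to deduce the corollary from the Cancellation Theorem~\ref{cancel} by a standard compact-generation argument. The functor $\Sigma^\infty_{\bb G_m}$ is left Quillen, being left adjoint to evaluation $\mathrm{ev}_0$ at the zeroth entry, so $L\Sigma^\infty_{\bb G_m}$ is an exact, coproduct-preserving triangulated functor. By Theorem~\ref{spectralmore}(2) the representable modules $\{\Mtrc^\Delta_*(-,X)\mid X\in\smaff_k\}$ form a set of compact generators of $D^\Delta_{\mathsf{mtr}}(k)$, and $L\Sigma^\infty_{\bb G_m}$ sends them to the modules $\{\Sigma^\infty_{\bb G_m}(\Mtrc^\Delta_*(-,X))\}$, which by construction are compact and generate $D^{\Delta,\mathsf{eff}}_{\mathsf{mtr},\bb G_m}(k)$. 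By the standard recognition criterion for equivalences of compactly generated triangulated categories, it therefore suffices to prove that $L\Sigma^\infty_{\bb G_m}$ is fully faithful on this set of generators, i.e. that for all $X,Y\in\smaff_k$ and $n\in\bb Z$ it induces a bijection
$$\Hom_{D^\Delta_{\mathsf{mtr}}(k)}\bigl(\Mtrc^\Delta_*(-,X),\Mtrc^\Delta_*(-,Y)[n]\bigr)\longrightarrow\Hom_{D^\Delta_{\mathsf{mtr},\bb G_m}(k)}\bigl(\Sigma^\infty_{\bb G_m}\Mtrc^\Delta_*(-,X),\Sigma^\infty_{\bb G_m}\Mtrc^\Delta_*(-,Y)[n]\bigr).$$

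The key input is the Cancellation Theorem. For every $Y\in\smaff_k$ the levels $\Mtrc^\Delta_*(-,Y\times\bb G_m^{\wedge m})$ of $\Sigma^\infty_{\bb G_m}(\Mtrc^\Delta_*(-,Y))$ map by stable equivalences to the fibrant replacements $M^\Sigma_{\mathrm{mtr}}(Y\times\bb G_m^{\wedge m})_f$, compatibly with structure maps, by Theorem~\ref{cancel}(3) and Proposition~\ref{frmodulevar}(2); and by Corollary~\ref{bispectrum} the resulting bispectrum $M^{\Sigma,\bb G}_{\mathrm{mtr}}(Y)_f$ is fibrant in $Sp^\Sigma(\Mod\Mtrc^\Delta_*(k),\bb G_m^{\wedge 1})$. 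Hence $M^{\Sigma,\bb G}_{\mathrm{mtr}}(Y)_f$ is a fibrant replacement of $\Sigma^\infty_{\bb G_m}(\Mtrc^\Delta_*(-,Y))$ whose zeroth entry $M^\Sigma_{\mathrm{mtr}}(Y)_f$ is locally equivalent to $\Mtrc^\Delta_*(-,Y)$ once more by Proposition~\ref{frmodulevar}(2). Since $\Sigma^\infty_{\bb G_m}(\Mtrc^\Delta_*(-,X))$ is cofibrant, the derived adjunction $L\Sigma^\infty_{\bb G_m}\dashv R\,\mathrm{ev}_0$ gives natural isomorphisms
$$\Hom_{D^\Delta_{\mathsf{mtr},\bb G_m}(k)}\!\bigl(\Sigma^\infty_{\bb G_m}\Mtrc^\Delta_*(-,X),\Sigma^\infty_{\bb G_m}\Mtrc^\Delta_*(-,Y)[n]\bigr)\cong\Hom_{D^\Delta_{\mathsf{mtr}}(k)}\!\bigl(\Mtrc^\Delta_*(-,X),M^\Sigma_{\mathrm{mtr}}(Y)_f[n]\bigr)\cong\Hom_{D^\Delta_{\mathsf{mtr}}(k)}\!\bigl(\Mtrc^\Delta_*(-,X),\Mtrc^\Delta_*(-,Y)[n]\bigr),$$
and tracing through the unit of the adjunction identifies the composite with the map induced by $L\Sigma^\infty_{\bb G_m}$; equivalently, the unit $\Mtrc^\Delta_*(-,Y)\to R\,\mathrm{ev}_0\,L\Sigma^\infty_{\bb G_m}(\Mtrc^\Delta_*(-,Y))$ is an isomorphism in $D^\Delta_{\mathsf{mtr}}(k)$, which is exactly what Theorem~\ref{cancel}(2),(3) provides. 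Alternatively one can compute the left-hand side directly as $\colim_m\Hom_{D^\Delta_{\mathsf{mtr}}(k)}\bigl(\Mtrc^\Delta_*(-,X)\langle\bb G_m^{\wedge m}\rangle,\Mtrc^\Delta_*(-,Y)\langle\bb G_m^{\wedge m}\rangle[n]\bigr)$ and note that every transition map is a bijection by Corollary~\ref{888}. Either way the bijection above holds, and the corollary follows.

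The substantive content is entirely in the Cancellation Theorem~\ref{cancel}; what remains is formal. The one point that deserves a little care is the identification of the fibrant replacement of $\Sigma^\infty_{\bb G_m}(\Mtrc^\Delta_*(-,Y))$ with the explicit bispectrum $M^{\Sigma,\bb G}_{\mathrm{mtr}}(Y)_f$: this rests on the fact that the structure maps of Theorem~\ref{cancel}(2) are local stable equivalences, so that the $\bb G_m^{\wedge 1}$-stabilization tower already stabilizes at the first level, and on the fact that in $\Mod\Mtrc^\Delta_*(k)$ Nisnevich-local weak equivalences are detected after forgetting the module structure (Theorem~\ref{modelmot}), so that a $\bb G_m^{\wedge 1}$-spectrum with locally fibrant levels is fibrant precisely when it satisfies the $\Omega_{\bb G_m}$-condition. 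Granting this, all remaining steps are standard properties of compactly generated triangulated categories and of Hovey's stabilization machine~\cite{H}, and present no further obstacle.
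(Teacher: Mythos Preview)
Your proof is correct and follows essentially the same route as the paper: identify $M_{\mathrm{mtr}}^{\Sigma,\bb G}(Y)_f$ as a fibrant replacement of $\Sigma^\infty_{\bb G_m}(\Mtrc^\Delta_*(-,Y))$ via Corollary~\ref{bispectrum}, deduce that $L\Sigma^\infty_{\bb G_m}$ is fully faithful on compact generators, and conclude by the standard recognition criterion (the paper cites \cite[Lemma~4.8]{GJ}). Your write-up is considerably more detailed than the paper's three-line argument, and your added alternative via Corollary~\ref{888} is a nice touch, but there is no substantive difference in strategy.
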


\begin{proof}
By Corollary~\ref{bispectrum} there is an isomorphism in $D_{\mathsf{mtr},\bb G_m}^{\Delta}(k)$
   $$L\Sigma^\infty_{\bb G_m}(\Mtrc^{\Delta}_*(-,X))\cong M_{\mathrm{mtr}}^{\Sigma,\bb G}(X)_f,\quad X\in\smaff_k.$$
The left adjoint functor $L\Sigma^\infty_{\bb G_m}$ takes compact 
generators to compact generators with isomorphic Hom-sets.
It remains to apply~\cite[Lemma~4.8]{GJ}.
\end{proof}

Consider the bispectrum $KGL$ defined in~\cite[Section~6]{GP2}. It represents the classical $K$-theory
$\bb P^1$-spectrum $BGL$ in the category of bispectra~\cite[Theorem~A.1]{GP2}. 
Denote by $\kgl$ the $(S^1,\bb G^{\wedge 1}_m)$-bispectrum $f_0(KGL)$.

\begin{lemma}\label{fzero}
Let $k$ be a perfect field.
Then the bispectrum
   $$M_{\mathrm{mtr}}^{\Sigma,\bb G}(pt)=(M^\Sigma_{\mathrm{mtr}}(pt),M^\Sigma_{\mathrm{mtr}}(\bb G_m^{\wedge 1})_,
       M^\Sigma_{\mathrm{mtr}}(\bb G_m^{\wedge 2}),\ldots)$$
with structure maps those of
Theorem~\ref{cancel}(2) is isomorphic to $\mathsf{kgl}$ in $SH(k)$.
\end{lemma}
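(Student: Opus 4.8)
The plan is to reduce the assertion, by the chain of stable equivalences of Theorem~\ref{wunderbar}, to the corresponding statement for the bispectrum of $\bb K$-motives, and then to identify that bispectrum with the zeroth effective cover $f_0(KGL)$.

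For the first stage I would argue as follows. By Corollary~\ref{bispectrum} the bispectrum $M_{\mathrm{mtr}}^{\Sigma,\bb G}(pt)_f$ is a motivically fibrant $(S^1,\bb G_m^{\wedge 1})$-bispectrum, hence represents a well defined object of $SH(k)$; by Corollary~\ref{895} together with the monoidal isomorphisms $\Mtrc^{\Delta}_*(-,\bb G_m^{\wedge n})\cong\Mtrc^{\Delta}_*(-,pt)\langle\bb G_m^{\wedge n}\rangle$, this object is $\Sigma^\infty_{\bb G_m}$ of the $S^1$-spectrum $M^\Sigma_{\mathrm{mtr}}(pt)$, and so lies in the effective subcategory of $SH(k)$. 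Applying Theorem~\ref{wunderbar} to each $\bb G_m^{\wedge n}$-level of the bispectrum (that is, to the simplicial algebras $k[\bb G_m^{\wedge n}]$), functorially and compatibly with the cancellation structure maps of Theorem~\ref{cancel}, I would then conclude that in $SH(k)$ the bispectrum $M_{\mathrm{mtr}}^{\Sigma,\bb G}(pt)_f$ is isomorphic to the bispectrum of $\bb K$-motives $(M_{\bb K}(pt),M_{\bb K}(\bb G_m^{\wedge 1}),M_{\bb K}(\bb G_m^{\wedge 2}),\ldots)_f$, whose zeroth $S^1$-level represents Quillen's algebraic $K$-theory $S^1$-spectrum by \cite[Theorem~5.11]{GP1}.

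For the second stage I would use the construction of $KGL$ in \cite[Section~6]{GP2}: it comes with a canonical comparison to (a fibrant model of) the $\bb K$-motive bispectrum and represents periodic algebraic $K$-theory by \cite[Theorem~A.1]{GP2}. Since the $\bb K$-motive bispectrum is effective, this comparison factors through the counit $f_0(KGL)\to KGL$, producing a morphism $\phi$ from the $\bb K$-motive bispectrum to $\kgl$. To see that $\phi$ is an isomorphism I would compare slice filtrations: the $\bb K$-motive bispectrum carries the Grayson tower of \cite[Definition~7.12]{GP}, whose successive layers are, by Suslin's theorem \cite[Theorem~4.11]{S} (already invoked in the proof of Theorem~\ref{cancel}), the motivic Eilenberg--Mac Lane spectra $\Sigma^{2q,q}H\bb Z$ --- precisely the slices $s_q(KGL)$. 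Hence the Grayson tower agrees with the slice tower of $f_0(KGL)$, $\phi$ is compatible with these filtrations, and the strong convergence of the Grayson tower established in \cite[Theorem~7.13]{GP}, together with convergence of the slice tower of $KGL$, forces $\phi$ to induce an isomorphism on all homotopy sheaves, hence to be an isomorphism in $SH(k)$.

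I expect the principal difficulty to lie in this second stage: pinning down the comparison map $\phi$ and verifying that the Grayson tower of the $\bb K$-motive bispectrum really coincides, as a filtered object, with the slice tower of $f_0(KGL)$ --- in particular that the effective-covering map $f_0(KGL)\to KGL$ is compatible level by level with the connective-to-periodic maps on $\bb K$-motives and with the Bott classes. Once that bookkeeping is in place, convergence of the two towers makes the comparison immediate; the first stage is a routine diagram chase combining Theorem~\ref{wunderbar}, Theorem~\ref{cancel} and Corollary~\ref{bispectrum}.
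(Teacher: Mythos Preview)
Your first stage is exactly what the paper does: apply Theorem~\ref{wunderbar} levelwise to pass from the symmetric matrix motive bispectrum to the $\bb K$-motive bispectrum $\bb K\bb G(pt)=(\bb K(-,pt),\bb K(-,\bb G_m^{\wedge 1}),\bb K(-,\bb G_m^{\wedge 2}),\ldots)$.

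Your second stage is where you diverge. The paper does not construct a comparison map and compare Grayson and slice towers; it simply invokes \cite[Lemma~7.9]{GP2}, which already asserts that $\kgl=f_0(KGL)$ is isomorphic in $SH(k)$ to $\bb K\bb G(pt)$. So the whole lemma is a two-line citation. What you outline is, in effect, a re-derivation of that result from \cite{GP2}: the ingredients you list (the Grayson tower of \cite[Definition~7.12]{GP}, Suslin's identification of its layers, convergence as in \cite[Theorem~7.13]{GP}) are precisely those underlying \cite[Lemma~7.9]{GP2}, so your sketch is on the right track and not wrong, but it duplicates work already packaged in the literature. The ``principal difficulty'' you flag --- pinning down $\phi$ and matching the two towers --- is real, and is exactly why one cites the lemma rather than redoing it here. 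If you want to keep your argument self-contained, you should at least point to \cite[Lemma~7.9]{GP2} as the place where the tower comparison is carried out, rather than leaving it as an exercise.
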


\begin{proof}
By~\cite[Lemma~7.9]{GP2} $\kgl$ is isomorphic to the $(S^1,\bb G^{\wedge 1}_m)$-bispectrum
   $$\bb K\bb G(pt):=(\bb K(-,pt),\bb K(-,\bb G_m^{\wedge 1}),\bb K(-,\bb G_m^{\wedge 2}),\ldots).$$
It follows from Theorem~\ref{wunderbar} that the latter bispectrum is isomorphic 
to $M_{\mathrm{mtr}}^{\Sigma,\bb G}(pt)$ in $SH(k)$.
\end{proof}

The following statement gives an explicit model for the commutative ring bispectrum $\kgl\in SH(k)$
in the category of symmetric bispectra and the category of $\kgl$-modules $\Mod_{SH(k)}\kgl$.

\begin{corollary}\label{fzerocor}
Let $k$ be a perfect field. Then the commutative ring object 
   $$\cc M_*^{\bb G}(pt):=(\Mtrc_*^\Delta(-,pt),\Mtrc_*^\Delta(-,\bb G_m^{\wedge 1})_,
       \Mtrc_*^\Delta(-,\bb G_m^{\wedge 2}),\ldots)$$
of $Sp^\Sigma(\Mod\Mtrc^\Delta_*(k),\bb G^{\wedge 1}_m)$ represents $\kgl$ in $SH(k)$ after forgetting the structure
of $\Mtrc_*^\Delta(k)$-modules in each level. In particular, $\Mod_{SH(k)}\kgl$ is equivalent to the homotopy category
$\Ho(\Mod\cc M_*^{\bb G}(pt))$ of 
$\cc M_*^{\bb G}(pt)$-modules taken in classical symmetric motivic $(S^1,\bb G^{\wedge 1}_m)$-bispectra 
$Sp^\Sigma_{S^1,\bb G^{\wedge 1}_m}(k)$ and equipped with stable (projective) motivic model structure.
\end{corollary}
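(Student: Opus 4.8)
The plan is to realise $\cc M_*^{\bb G}(pt)$ as the monoidal unit of symmetric $\bb G_m^{\wedge 1}$-bispectra of $\Mtrc^\Delta_*(k)$-modules --- so that it is automatically a commutative ring object --- then to match it, as a ring object, with $\kgl$ in $SH(k)$, and finally to transport the category of modules along this equivalence by a standard rigidification argument. \textbf{Step 1 (the unit).} By Day's theorem (recalled before Theorem~\ref{modelmot}) the representable module $\Mtrc^\Delta_*(-,\pt)$ is the monoidal unit of $\Mod\Mtrc^\Delta_*(k)$, and the natural isomorphism $\Mtrc^\Delta_*(-,X\times Y)\cong\Mtrc^\Delta_*(-,X)\wedge\Mtrc^\Delta_*(-,Y)$ gives $\Mtrc^\Delta_*(-,\pt)\langle\bb G_m^{\wedge n}\rangle\cong\Mtrc^\Delta_*(-,\bb G_m^{\wedge n})$ for every $n\geq 0$. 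Hence $\cc M_*^{\bb G}(pt)=\Sigma^\infty_{\bb G_m}(\Mtrc^\Delta_*(-,\pt))$ is the monoidal unit of $Sp^\Sigma(\Mod\Mtrc^\Delta_*(k),\bb G_m^{\wedge 1})$ with its stable projective model structure, and in particular a commutative ring object.

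\textbf{Step 2 (forgetting the module structure).} The levelwise forgetful functor $\Mod\Mtrc^\Delta_*(k)\to Sp^\Sigma_{S^1}(k)$ is lax symmetric monoidal --- its unit and product constraints are induced by the ``constant'' correspondence $Y\to\pt$ and the diagonal correspondence $Y\to Y\times Y$ --- and is right Quillen by Theorem~\ref{modelmot}; it induces a lax symmetric monoidal right Quillen functor $U\colon Sp^\Sigma(\Mod\Mtrc^\Delta_*(k),\bb G_m^{\wedge 1})\to Sp^\Sigma_{S^1,\bb G_m^{\wedge 1}}(k)$, so $U(\cc M_*^{\bb G}(pt))$ is a commutative ring object in classical symmetric motivic bispectra. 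By Corollary~\ref{895} and its proof (which rests on the Cancellation Theorem~\ref{cancel} and Corollary~\ref{bispectrum}), together with Proposition~\ref{frmodulevar}(2), the derived functor $\mathbf RU(\cc M_*^{\bb G}(pt))=L\Sigma^\infty_{\bb G_m}(\Mtrc^\Delta_*(-,\pt))$, after forgetting the module structure, is the motivically fibrant bispectrum $M_{\mathrm{mtr}}^{\Sigma,\bb G}(pt)_f$, which by Lemma~\ref{fzero} is isomorphic to $\kgl$ in $SH(k)$. To upgrade this to an isomorphism of commutative ring objects one tracks the multiplicative structure through the zig-zag of Theorem~\ref{wunderbar} and the identification $\kgl\cong\bb K\bb G(pt)$ of~\cite[Lemma~7.9]{GP2}, all of whose comparison maps are induced by the (strictly associative and commutative) tensor products involved; alternatively one appeals to uniqueness of the ring structure on the $K$-theory bispectrum. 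This proves that $\cc M_*^{\bb G}(pt)$ represents $\kgl$ in $SH(k)$ after forgetting the $\Mtrc^\Delta_*(k)$-module structure in each level.

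\textbf{Step 3 (module categories).} Write $R=U(\cc M_*^{\bb G}(pt))$. The stable projective motivic model structure on $Sp^\Sigma_{S^1,\bb G_m^{\wedge 1}}(k)$ is cofibrantly generated, proper, symmetric monoidal and satisfies the monoid axiom, so $R$-modules inherit a cofibrantly generated model structure whose homotopy category is the derived category of $R$-modules. Since, by Step~2, $R$ is linked to $\kgl$ (a cofibrant resolution of $f_0(KGL)$) by a zig-zag of weak equivalences of commutative ring objects, the standard rigidification theorem for modules over weakly equivalent ring objects yields a Quillen equivalence $\Mod R\simeq\Mod\kgl$, whence $\Ho(\Mod\cc M_*^{\bb G}(pt))\simeq\Ho(\Mod\kgl)=\Mod_{SH(k)}\kgl$, as claimed. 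The delicate point, and the one requiring the most care, is the multiplicative refinement in Step~2: promoting the underlying stable equivalence $\mathbf RU(\cc M_*^{\bb G}(pt))\simeq\kgl$ to an equivalence of commutative ring objects, i.e. checking that the ring structures match all along the chain of Theorem~\ref{wunderbar} and~\cite[Lemma~7.9]{GP2}; this, together with verifying the cofibrancy and monoid-axiom hypotheses needed to invoke rigidification, is where the real work lies.
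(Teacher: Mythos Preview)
Your proposal is correct and rests on the same two ingredients the paper invokes, namely Lemma~\ref{fzero} and Proposition~\ref{mtrmodulevar}(2) (equivalently Proposition~\ref{frmodulevar}(2)): the levelwise stable equivalence $\Mtrc^\Delta_*(-,\bb G_m^{\wedge n})\to M^\Sigma_{\mathrm{mtr}}(\bb G_m^{\wedge n})$ identifies $\cc M_*^{\bb G}(pt)$ with $M_{\mathrm{mtr}}^{\Sigma,\bb G}(pt)$, and the latter is $\kgl$ in $SH(k)$. The paper's proof is literally one sentence citing these two facts; everything else you write --- the identification of $\cc M_*^{\bb G}(pt)$ as the monoidal unit, the lax monoidality of the forgetful functor, the multiplicative refinement, and the Schwede--Shipley rigidification for module categories --- is left implicit there. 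Your detour through Corollary~\ref{895} and Corollary~\ref{bispectrum} in Step~2 is unnecessary: once you have Proposition~\ref{mtrmodulevar}(2) and Lemma~\ref{fzero} the underlying identification in $SH(k)$ is immediate, without passing through the fibrant bispectrum or the full faithfulness of $L\Sigma^\infty_{\bb G_m}$. On the other hand, your explicit attention to the ring-object compatibility and to the hypotheses needed for rigidification is a genuine elaboration of what the paper treats as automatic; you are right that this is where the care is required, even if the paper does not spell it out.
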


\begin{proof}
This follows from Lemma~\ref{fzero} and Proposition~\ref{mtrmodulevar}(2).
\end{proof}

The following lemma computes a fibrant replacement of the bispectrum $\kgl\wedge X_+\in SH(k)$,
$X\in\smaff_k$, after inverting the characteristic. 

\begin{proposition}\label{kglx}
Let $k$ be a perfect field of exponential characteristic $e$ and $X\in\smaff_k$. Then the canonical map of bispectra
   $$\cc M_*^{\bb G}(pt)\wedge X_+\to M_{\mathrm{mtr}}^{\Sigma,\bb G}(X)_f$$
is an isomorphism in $SH(k)[1/e]$.
\end{proposition}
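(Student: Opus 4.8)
The plan is to identify both sides with $\bb K$-theoretic bispectra and then to reduce to a ``linearity'' (reconstruction) statement for $K$-motives. First I would bring the target under control: by Theorem~\ref{wunderbar}, applied on each $\bb G_m$-level $n$ to the (simplicial) algebra $k[X\times\bb G_m^{\wedge n}]$, the bispectrum $M_{\mathrm{mtr}}^{\Sigma,\bb G}(X)$ is levelwise sectionwise stably equivalent, via a functorial zigzag, to the $\bb K$-bispectrum $\bb K\bb G(X)=(\bb K(\Delta^\bullet\times-,X),\bb K(\Delta^\bullet\times-,X\times\bb G_m^{\wedge1}),\ldots)$, and by Corollary~\ref{bispectrum} the bispectrum $M_{\mathrm{mtr}}^{\Sigma,\bb G}(X)_f$ is a motivically fibrant replacement of $M_{\mathrm{mtr}}^{\Sigma,\bb G}(X)$; hence $M_{\mathrm{mtr}}^{\Sigma,\bb G}(X)_f$ and $\bb K\bb G(X)$ represent the same object of $SH(k)$. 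On the source side, Corollary~\ref{fzerocor} identifies $\cc M_*^{\bb G}(pt)$ with $\kgl$ in $SH(k)$, so $\cc M_*^{\bb G}(pt)\wedge X_+$ represents $\kgl\wedge X_+$. Under these identifications the canonical map of the statement becomes the natural comparison $\kgl\wedge X_+\to\bb K\bb G(X)$, and it remains to show that this comparison is an isomorphism in $SH(k)[1/e]$.

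The heart of the matter is the linearity of the $K$-motive construction over products: for every $Z\in\smaff_k$ the canonical morphism $\bb K(\Delta^\bullet\times-,Z)\wedge X_+\to\bb K(\Delta^\bullet\times-,X\times Z)$ is a motivic equivalence after inverting $e$. This is the $K$-theoretic analogue of the reconstruction theorem for framed motives and would be proved along the same lines as the cancellation/reconstruction results of~\cite{GP,GP2,AGP}: one uses that $k$ is perfect, that each Nisnevich sheaf $\pi_i^{nis}C_*\bb K(-,Z')$ is strictly homotopy invariant and carries $\bb K_0$-transfers (by~\cite[Theorem~4.23]{GP1} together with~\cite[Theorem~6.2.7]{Mor}), the Cancellation Theorem~\ref{cancel} together with the Grayson tower for $\bb K$-motives and Suslin's theorem~\cite{S} (exactly as in the proof of Theorem~\ref{cancel}(1)), and the fact that matrix transfers are of ``finite flat'' type --- which is the place where the exponential characteristic must be inverted, exactly as in the finite flat correspondences model of~\cite{Bac}. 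Granting this, one applies it levelwise with $Z=\bb G_m^{\wedge n}$ and uses Proposition~\ref{frmodulevar}(2) to rewrite $\bb K(\Delta^\bullet\times-,\bb G_m^{\wedge n})\wedge X_+$ as the $n$th level $\Mtrc^{\Delta}_*(-,\bb G_m^{\wedge n})\wedge X_+$ of $\cc M_*^{\bb G}(pt)\wedge X_+$ and $\bb K(\Delta^\bullet\times-,X\times\bb G_m^{\wedge n})$ as the $n$th level of $M_{\mathrm{mtr}}^{\Sigma,\bb G}(X)_f$; these equivalences are compatible with the bispectrum structure maps --- those of Theorem~\ref{cancel}(2) on the target, and on the source the maps induced by~\eqref{wedgegm} together with the monoidal isomorphism $\Mtrc^{\Delta}_*(-,\bb G_m^{\wedge n})\wedge_{\Mtrc^{\Delta}_*(k)}\Mtrc^{\Delta}_*(-,X)\cong\Mtrc^{\Delta}_*(-,X\times\bb G_m^{\wedge n})$ --- so they assemble into the required isomorphism of bispectra in $SH(k)[1/e]$.

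The main obstacle is the reconstruction step of the previous paragraph: one has to equip $Z\mapsto C_*\bb K(-,Z)$ with the $\bb K_0$-pretransfer structure, check that it enjoys the formal properties (additivity, Nisnevich excision, homotopy invariance after strictification, compatibility with $\bb G_m$-cancellation) that drive the framed-motives argument, and track precisely where Gabber's refinement of de Jong alterations enters so as to pin down the localization $SH(k)[1/e]$. A secondary, routine but necessary, point is the verification that the canonical map in the statement coincides in $SH(k)$ with the comparison $\kgl\wedge X_+\to\bb K\bb G(X)$ described above --- equivalently, that it is the one obtained from the Hurewicz-type map $\Sigma^\infty X_+\to\Mtrc^{\Delta}_*(-,X)$ after $\cc M_*^{\bb G}(pt)$-linearization --- which is a diagram chase using the definitions of the bispectrum structure maps and of the canonical map $\alpha$ of Propositions~\ref{mtrmodulevar}(2) and~\ref{frmodulevar}(2).
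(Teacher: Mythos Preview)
Your reduction step is correct and matches the paper: via Theorem~\ref{wunderbar} and Lemma~\ref{fzero} (or Corollary~\ref{fzerocor}) the assertion is equivalent to showing that the natural map of bispectra
\[
\bb K\bb G(pt)\wedge X_+\longrightarrow\bb K\bb G(X)
\]
is an isomorphism in $SH(k)[1/e]$.

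Where you diverge from the paper is in the ``heart of the matter''. You propose to prove a levelwise $S^1$-spectrum statement, namely that $\bb K(\Delta^\bullet\times-,Z)\wedge X_+\to\bb K(\Delta^\bullet\times-,X\times Z)$ is a motivic equivalence after inverting $e$, by assembling a reconstruction argument out of $\bb K_0$-transfers, strict homotopy invariance, cancellation, and alterations. The paper does not attempt this stronger levelwise statement. Instead it works directly at the bispectrum level: it feeds both $\bb K\bb G(pt)\wedge X_+$ and $\bb K\bb G(X)$ into the Grayson tower of bispectra of~\cite[p.~149]{GP2}, so that the comparison map becomes a map of towers whose layers are (up to shift) the $\bb K_0$-bispectra $M^{\bb G}_{\bb K_0}(pt)(n)\wedge X_+$ and $M^{\bb G}_{\bb K_0}(X)(n)$. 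The linearity statement is then needed only for these Eilenberg--Mac\,Lane layers, and this is exactly the content of the generalized R{\"o}ndigs--{\O}stv{\ae}r theorem~\cite[Theorem~5.3]{Gark2}; strong convergence of the associated spectral sequences (as in~\cite[Theorem~4.7]{GP2}) finishes the argument.

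So the paper's route is: reduce from $\bb K$ to $\bb K_0$ via the Grayson tower, then cite a single reconstruction result for $\bb K_0$-motives. Your route tries to run the reconstruction directly for $\bb K$, which you yourself flag as the main obstacle. Two concrete remarks on your sketch: first, the appeal to Gabber/de~Jong alterations and to the ``finite flat'' nature of matrix transfers is unnecessary here---the inversion of $e$ enters through~\cite[Theorem~5.3]{Gark2}, not through an alterations argument in this proof; second, the ingredients you list (Nisnevich excision, strict homotopy invariance, cancellation) are inputs to~\cite[Theorem~5.3]{Gark2} rather than a replacement for it, so your paragraph is closer to an outline of why that theorem applies than to an independent proof. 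If you want to follow the paper, replace your second paragraph by the Grayson-tower-plus-\cite[Theorem~5.3]{Gark2} argument; if you want to keep your levelwise formulation, you should still pass through the Grayson filtration to reduce to $\bb K_0$ first.
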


\begin{proof}
Consider a bispectrum
   $$\bb K\bb G(X):=(\bb K(-,X),\bb K(-,X\times\bb G_m^{\wedge 1}),\bb K(-,X\times\bb G_m^{\wedge 2}),\ldots).$$
By the proof of Lemma~\ref{fzero} it is enough to show that the canonical map of bispectra
   $$\bb K\bb G(pt)\wedge X_+\to \bb K\bb G(X)$$
is an isomorphism in $SH(k)[1/e]$. The Grayson tower of bispectra constructed in~\cite[p.~149]{GP2}
   $$\cdots\to\Sigma_{S^1}^2\Sigma^2_{\bb G_m}\bb K\bb G(X)\to\Sigma_{S^1}\Sigma_{\bb G_m}\bb K\bb G(X)\to\bb K\bb G(X)$$
fits in a commutative diagram in $SH(k)$
   $$\xymatrix{\cdots\ar[r]&\Sigma_{S^1}^2\Sigma^2_{\bb G_m}\bb K\bb G(pt)\wedge X_+\ar[r]\ar[d]
                       &\Sigma_{S^1}\Sigma_{\bb G_m}\bb K\bb G(pt)\wedge X_+\ar[r]\ar[d]&\bb K\bb G(pt)\wedge X_+\ar[d]\\
                       \cdots\ar[r]&\Sigma_{S^1}^2\Sigma^2_{\bb G_m}\bb K\bb G(X)\ar[r]
                       &\Sigma_{S^1}\Sigma_{\bb G_m}\bb K\bb G(X)\ar[r]&\bb K\bb G(X)}$$
The layers of the upper sequence are bispectra of the form (up to shift)
   $$M^{\bb G}_{\bb K_0}(pt)(n)\wedge X_+:=
        (\bb K_0(\Delta^\bullet,\bb G_m^{\wedge n})\wedge X_+,\bb K_0(\Delta^\bullet,\bb G_m^{\wedge n+1})\wedge X_+,\ldots),\quad n\geq 0.$$
The layers of the lower sequence are bispectra of the form (up to shift)
   $$M^{\bb G}_{\bb K_0}(X)(n):=(\bb K_0(\Delta^\bullet,X\times\bb G_m^{\wedge n}),\bb K_0(\Delta^\bullet,X\times\bb G_m^{\wedge n+1}),\ldots),\quad n\geq 0.$$
By the generalized R{\"o}ndigs--{\O}stv{\ae}r theorem~\cite[Theorem~5.3]{Gark2} all maps between the layers
$M^{\bb G}_{\bb K_0}(pt)(n)\wedge X_+\to M^{\bb G}_{\bb K_0}(X)(n)$ are isomorphisms in $SH(k)[1/e]$.
Similarly to~\cite[Theorem~4.7]{GP2} both towers give rise to a map of strongly 
convergent spectral sequences with isomorphic $E^2_{*,*}$-pages, and hence our statement follows.
\end{proof}

We are now in a position to prove the following result.

\begin{theorem}\label{kglmodules}
Let $k$ be a perfect field of exponential characteristic $e$.
There is a natural equivalence of compactly generated triangulated categories
   $$\Mod_{SH(k)[1/e]}\kgl\simeq D_{\mathsf{mtr},\bb G_m}^{\Delta}(k)[1/e].$$
\end{theorem}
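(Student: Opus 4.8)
The plan is to identify $D_{\mathsf{mtr},\bb G_m}^{\Delta}(k)[1/e]$ with $\Mod_{SH(k)[1/e]}\kgl$ by producing a chain of equivalences of compactly generated triangulated categories, matching compact generators on both sides. The starting point is Corollary~\ref{fzerocor}: the commutative ring object $\cc M_*^{\bb G}(pt)$ in symmetric $(S^1,\bb G_m^{\wedge 1})$-bispectra over $\Mtrc^{\Delta}_*(k)$-modules represents $\kgl$ in $SH(k)$, and its category of modules $\Ho(\Mod\cc M_*^{\bb G}(pt))$ models $\Mod_{SH(k)}\kgl$. So after inverting $e$ it suffices to show that $\Ho(\Mod\cc M_*^{\bb G}(pt))[1/e]$ is equivalent to $D_{\mathsf{mtr},\bb G_m}^{\Delta}(k)[1/e]=\Ho\big(Sp^\Sigma(\Mod\Mtrc^{\Delta}_*(k),\bb G_m^{\wedge 1})\big)[1/e]$.

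\textbf{Step 1: reduce to a statement about generators.} Both categories are compactly generated triangulated, so by~\cite[Lemma~4.8]{GJ} (as used in the proof of Corollary~\ref{895}) it is enough to exhibit a set of compact generators in each and a full and faithful functor on generators matching the Hom-sets. The compact generators of $D_{\mathsf{mtr},\bb G_m}^{\Delta}(k)$ are the shifted suspensions $\Sigma^\infty_{\bb G_m}(\Mtrc^{\Delta}_*(-,X))(n)$, $X\in\smaff_k$, $n\in\bb Z$; by Corollary~\ref{bispectrum} their fibrant replacements are the bispectra $M_{\mathrm{mtr}}^{\Sigma,\bb G}(X)_f(n)$. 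On the $\kgl$-module side, $\Mod_{SH(k)}\kgl$ is compactly generated by $\kgl\wedge X_+[n]$ ($n\in\bb Z$, $X\in\smaff_k$, since $SH(k)$ is compactly generated by $\Sigma^\infty X_+$ twists and $\kgl$ is a cellular ring spectrum). Thus I would set up the free–forgetful adjunction between $\kgl$-modules and $Sp^\Sigma(\Mod\Mtrc^{\Delta}_*(k),\bb G_m^{\wedge 1})$, whose right adjoint forgets the $\Mtrc^{\Delta}_*(k)$-module structure levelwise (the forgetful functor is the one appearing in Corollary~\ref{fzerocor} and Theorem~\ref{modelmot}); this forgetful functor is a left Quillen functor as in the proof of Theorem~\ref{spectralmore}(1), using that $\Mtrc^{\Delta}_*(k)$ is Nisnevich excisive.

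\textbf{Step 2: identify the images of the generators.} The free $\cc M_*^{\bb G}(pt)$-module on $\Mtrc^{\Delta}_*(-,X)$ is $\cc M_*^{\bb G}(pt)\wedge X_+$, which represents $\kgl\wedge X_+$ in $SH(k)$ by Corollary~\ref{fzerocor}. On the other side, after inverting $e$, Proposition~\ref{kglx} gives a canonical isomorphism $\cc M_*^{\bb G}(pt)\wedge X_+\to M_{\mathrm{mtr}}^{\Sigma,\bb G}(X)_f$ in $SH(k)[1/e]$, and by Corollaries~\ref{bispectrum} and~\ref{895} the latter is the fibrant replacement of $\Sigma^\infty_{\bb G_m}(\Mtrc^{\Delta}_*(-,X))$. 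Hence, after inverting $e$, the derived free functor sends the compact generators $\Sigma^\infty_{\bb G_m}(\Mtrc^{\Delta}_*(-,X))(n)$ of $D_{\mathsf{mtr},\bb G_m}^{\Delta}(k)[1/e]$ precisely to the compact generators $\kgl\wedge X_+[n]$ of $\Mod_{SH(k)[1/e]}\kgl$, compatibly with the twist functors $(1)$ (here one uses the Cancellation Theorem~\ref{cancel} to control the $\bb G_m$-twists). One then checks the free functor is full and faithful on these generators by comparing $\Hom$-groups: $\Hom$ of free modules on representables computes, after forgetting structure, $\Hom$ in $SH(k)[1/e]$ between $\kgl\wedge X_+$-twists, which is exactly the corresponding $\kgl$-module $\Hom$.

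\textbf{Step 3: conclude.} Since the derived free functor is a triangulated functor between compactly generated triangulated categories that is fully faithful on a set of compact generators and carries that set bijectively onto a generating set of the target, it is an equivalence by~\cite[Lemma~4.8]{GJ}. This gives $D_{\mathsf{mtr},\bb G_m}^{\Delta}(k)[1/e]\simeq\Mod_{SH(k)[1/e]}\kgl$, as claimed. \textbf{The main obstacle} I anticipate is Step~2: verifying that the isomorphism of Proposition~\ref{kglx} is natural enough to be promoted to a levelwise-coherent isomorphism of $\cc M_*^{\bb G}(pt)$-modules (not merely an abstract isomorphism in $SH(k)[1/e]$ for each $X$ separately), and that the free–forgetful adjunction is genuinely Quillen so that derived left adjoint commutes with the identifications above — this requires invoking that $\Mtrc^{\Delta}_*(k)$ is Nisnevich excisive symmetric monoidal (Theorem~\ref{spectralmore}) and that fibrant replacement in $\Mtrc^{\Delta}_*(k)$-modules agrees with fibrant replacement in $Sp^\Sigma_{S^1}(k)$ after forgetting structure (Theorem~\ref{modelmot}), plus the behavior of $\bb G_m$-stabilization under the forgetful functor.
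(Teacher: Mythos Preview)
Your proposal is essentially the paper's own argument: invoke Corollary~\ref{fzerocor} to model $\Mod_{SH(k)}\kgl$ as $\Ho(\Mod\cc M_*^{\bb G}(pt))$, use the canonical functor between this and $Sp^\Sigma(\Mod\Mtrc^{\Delta}_*(k),\bb G_m^{\wedge 1})$, verify via Proposition~\ref{kglx} that compact generators go to compact generators with isomorphic Hom-sets after inverting $e$, and conclude by~\cite[Lemma~4.8]{GJ}. One small slip worth fixing: the left adjoint (extension of scalars) runs from $\Mod\cc M_*^{\bb G}(pt)$ to $Sp^\Sigma(\Mod\Mtrc^{\Delta}_*(k),\bb G_m^{\wedge 1})$, sending the free module $\cc M_*^{\bb G}(pt)\wedge X_+$ to $\Sigma^\infty_{\bb G_m}\Mtrc^{\Delta}_*(-,X)$ --- in Step~2 you stated the free functor in the opposite direction, but this does not affect the substance of the argument.
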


\begin{proof}
By Corollary~\ref{fzerocor} $\Mod_{SH(k)}\kgl$ is equivalent to the homotopy category
$\Ho(\Mod\cc M_*^{\bb G}(pt))$. There is a canonical functor between model categories 
   $$\Mod\cc M_*^{\bb G}(pt)\to Sp^\Sigma(\Mod\Mtrc^{\Delta}_*(k),\bb G_m^{\wedge 1}).$$
It induces a functor between compactly generated triangulated categories
   $$\Ho(\Mod\cc M_*^{\bb G}(pt))[1/e]\to D_{\mathsf{mtr},\bb G_m}^{\Delta}(k)[1/e].$$
It follows from Proposition~\ref{kglx} that this functor takes compact generators to compact generators with isomorphic
Hom-sets between them. Now our theorem follows from~\cite[Lemma~4.8]{GJ}.
\end{proof}

We also refer the reader to~\cite[Corollary~4.3]{Bac} for a model of $\Mod_{SH(k)[1/e]}\kgl$ in terms of
``finite flat correspondences''.

\bibliographystyle{amsalpha}

\printindex

\end{document}